\newcommand{\rmidarrow}{\tikz \draw[thick,-{Computer Modern Rightarrow}] (-1pt,0) --(1pt,0);}	
\newcommand{\lmidarrow}{\tikz \draw[thick, -{Computer Modern Rightarrow}] (1pt,0) -- (-1pt,0);}
\begin{document}

\newtheorem{thm}{Theorem}[section]
\newtheorem{lem}[thm]{Lemma}
\newtheorem{prop}[thm]{Proposition}
\newtheorem{cor}[thm]{Corollary}
\newtheorem{obs}[thm]{Observation}
\newtheorem{rem}[thm]{Remark}
\newtheorem{Def}[thm]{Definition}
\newtheorem{ex}[thm]{Example}

\title{Quantum Automorphism Group of Direct Sum of Cuntz Algebras}
\author{Ujjal Karmakar \MakeLowercase{and} Arnab Mandal }
\maketitle

\begin{abstract}

In this article, we explore the quantum symmetry of the direct sum of a finite family of Cuntz algebras $\{\mathcal{O}_{n_i} \}_{i=1}^{m}$, viewing them as graph $C^*$-algebras associated to the graphs $\{L_{n_i}\}_{i=1}^{m}$ (where $L_n$ denotes the graph containing $n$ loops based at a single vertex), in the category introduced by Joardar and Mandal in \cite{Mandal}. It has been shown that the quantum automorphism group of the direct sum of non-isomorphic Cuntz algebras is ${U}_{n_1}^{+}*{U}_{n_2}^{+}* \cdots *{U}_{n_m}^{+}$ for distinct $n_i$'s, i.e.
	\begin{equation*}
	Q_{\tau}^{Lin}(\sqcup_{i=1}^{m} ~ L_{n_i}) \cong  *_{i=1}^{m} ~~ Q_{\tau}^{Lin}(L_{n_i}) \cong {U}_{n_1}^{+}*{U}_{n_2}^{+}* \cdots *{U}_{n_m}^{+},	\end{equation*}
	where $Q_{\tau}^{Lin}(\Gamma)$ denotes the quantum automorphism group of the graph $C^*$-algebra associated to $\Gamma$.
	Also, the quantum automorphism group of the direct sum of $m$ copies of isomorphic Cuntz algebra $\mathcal{O}_n$ is  $U_n^+ \wr_* S_m^+,$ i.e.
	\begin{equation*}
	Q_{\tau}^{Lin}(\sqcup_{i=1}^{m} ~ L_n) \cong  Q_{\tau}^{Lin}(L_n) \wr_* S_m^+ \cong  U_n^+ \wr_* S_m^+.
	\end{equation*}
	Furthermore, we have provided counter-examples to demonstrate that the isomorphisms mentioned above cannot be generalized to arbitrary graph $C^*$-algebras, whereas analogous relations can be extended in the context of quantum automorphism groups of graphs in the sense of Banica and Bichon.	
\end{abstract}

\noindent \textbf{Keywords:} Cuntz algebra, Compact matrix quantum group, Quantum automorphism group, Free wreath product.\\[0.2cm]
\textbf{AMS Subject classification:} 	46L89, 	46L67.

\section{Introduction}
The Cuntz algebra, introduced by Joachim Cuntz in 1977 \cite{Cuntz}, is an interesting example of a $C^*$-algebra that can often be described as a universal $C^*$-algebra in terms of generators and algebraic relations. Formally, the Cuntz algebra with $n \in \mathbb{N}$ generators (denoted by $\mathcal{O}_{n}$) is the universal $C^*$-algebra with generators $s_1,s_2,...,s_n$ having relations $ s_{i}^*s_{i}=1$  for all $i \in \{1,2,...,n\}$ (i.e., all $s_{i}$'s are isometries) and $ \sum_{i=1}^{n} s_{i}s_{i}^*=1 $. By construction, $\mathcal{O}_{n}$ is commutative if and only if  $n=1$ and $\mathcal{O}_{1}$ is isomorphic to $C(S^1)$. Moreover, $\mathcal{O}_{n}$ and $\mathcal{O}_{m}$ are isomorphic iff $m=n$. Interestingly, the Cuntz algebras can be viewed as graph $C^*$-algebras. A graph $C^*$-algebra (denoted by $C^*(\Gamma)$) is constructed as a universal $C^*$-algebra generated by some orthogonal projections and partial isometries coming from a given directed graph $\Gamma$. Though not every $C^*$-algebra can be thought of as a graph $C^*$-algebra, for instance $C(S^1 \times S^1)$, many important examples of $C^*$-algebras, including matrix algebras, Cuntz algebras, Toeplitz algebra etc. can be recognized as graph $C^*$-algebras. In particular, $\mathcal{O}_{n}$ can be realized as a graph $C^*$-algebra with respect to the graph containing $n$ distinct loops based at a single vertex. Moreover, a graph $C^*$-algebra is a Cuntz-Krieger algebra (introduced in \cite{CK}) if the underlying finite graph contains no sink (\cite{KPRR}). An importance of a graph $C^*$-algebra is that some graph-theoretic properties of the underlying graph $\Gamma$ can be recovered from the operator algebraic properties of $C^*(\Gamma)$ and vice-versa (see \cite{Kumjian}, \cite{Tom}, \cite{Raeburn} for more details).\\

On the other hand, compact quantum groups (in short, CQG) appeared in mathematics around the 20th century, almost a hundred years after the appearance of the group. Using an analytic construction, S.L. Woronowicz constructed a few initial examples of CQGs in \cite{Wor}. In non-commutative geometry, mathematicians were always highly determined to capture an appropriate notion of symmetry for the non-commutative spaces. The aim was to generalize the concept of classical group symmetry to develop a `non-commutative version of symmetry' \cite{connes}. In 1998, Shuzhou Wang formulated the notion of a quantum automorphism group for a finite space $X_{n}$ (a space containing $n$ points) as the `universal object' in a category of compact quantum groups acting on the unital $C^*$-algebra $C(X_n)$. He showed that the quantum automorphism group of $X_n$ is larger than the classical automorphism group of $X_n$ for $n>3$ and its associated $C^*$-algebra is infinite-dimensional and non-commutative. Moreover, the quantum automorphism groups for any finite-dimensional $C^*$-algebras were classified by him (see \cite{Wang}). In 2003, the quantum symmetry structure for a finite graph $\Gamma$ was formulated by J. Bichon in \cite{Bichon} and we denote that quantum automorphism group of $\Gamma$ by $QAut_{Bic}(\Gamma)$. Later, the quantum symmetry for a finite graph $\Gamma$  was also extended by T. Banica in \cite{Ban} and the quantum automorphism group of $\Gamma$ (in the sense of Banica) is denoted by $QAut_{Ban}(\Gamma)$. The notion of quantum isometry group (in an infinite-dimensional set-up) was formulated by Goswami in \cite{Laplace}. A few years later, adopting the key ideas from their works, T. Banica and A. Skalski proposed the notion of orthogonal filtration on a $C^*$-algebra, equipped with a faithful state, and its quantum symmetry \cite{Ortho}.
Since the associated graph $C^*$-algebra over a finite, directed graph may indeed be infinite-dimensional, in 2017, an interesting study about the quantum symmetry of a graph $C^*$-algebra $C^*(\Gamma)$ was investigated by S. Schmidt and M. Weber in \cite{Web}. Inspired by their work, in 2018-2021, S. Joardar and A. Mandal studied the quantum symmetry of a graph $C^*$-algebra $C^*(\Gamma)$ for a finite graph $\Gamma$ without isolated vertices, in a categorical framework considering two different categories, namely $\mathfrak{C}_{\tau}^{Lin}(\Gamma)$ and $\mathfrak{C}_{KMS}^{Lin}(\Gamma)$ introduced in \cite{Mandal} and \cite{Mandalkms} respectively. Though the quantum symmetry group $QAut_{SW}(\Gamma)$ of the graph $C^*$-algebra coincides with the quantum automorphism group (of the underlying graph) $QAut_{Ban}(\Gamma)$; ${Q}_{\tau}^{Lin}(\Gamma)$ is strictly larger than $QAut_{Ban}(\Gamma)$. Moreover, for a finite graph $\Gamma$ without a sink, one can also consider the category $\mathfrak{C}_{KMS}^{Lin}(\Gamma)$ and under certain conditions, the category $\mathfrak{C}_{\tau}^{Lin}(\Gamma)$ coincides with $\mathfrak{C}_{KMS}^{Lin}(\Gamma)$ (see \cite{Mandalkms}).\\

\noindent Now, if we take $n$ graphs $ \{\Gamma_{i}\}_{i=1}^{n}$ and consider the disjoint union of those graphs, i.e. $\sqcup_{i=1}^{n} ~ \Gamma_{i}$, then it is natural to ask what the relation is between the quantum symmetry of the graph ($ \sqcup_{i=1}^{n} ~ \Gamma_{i}$) and the individual quantum symmetries of the graphs $\Gamma_{i}$'s in the sense of Banica or Bichon, i.e., is there any relation between $QAut_{Ban}(\sqcup_{i=1}^{n} ~ \Gamma_{i})$ [$QAut_{Bic}(\sqcup_{i=1}^{n} ~ \Gamma_{i})$] and $ \{QAut_{Ban}(\Gamma_{i})\}_{i=1}^{n} $ [$ \{QAut_{Bic}(\Gamma_{i})\}_{i=1}^{n}$]? It is well known that if $\Gamma_{i}$'s are connected and mutually `quantum non-isomorphic', then
\begin{equation*}
QAut_{Ban}(\sqcup_{i=1}^{n} ~ \Gamma_{i}) \cong *_{i=1}^{n} ~~ QAut_{Ban}(\Gamma_{i})
\end{equation*} (refer to \cite{Lupini}, \cite{SSthesis}, \cite{QAut_tree} and \cite{Meu} for details). Moreover, if all $\Gamma_i$'s are connected and isomorphic to each other, then 
\begin{equation*}
QAut_{Ban}(\sqcup_{i=1}^{n} ~ \Gamma_{i}) \cong QAut_{Ban}(\Gamma_1) \wr_* S_n^+
\end{equation*} 
(\cite{Banver}) and 
\begin{equation*}
QAut_{Bic}(\sqcup_{i=1}^{n} ~ \Gamma_{i}) \cong  QAut_{Bic}(\Gamma_1) \wr_* S_n^+
\end{equation*}
(\cite{Bichonwreath}). Similar questions also arise in a graph $C^*$-algebraic context, i.e. what is the relation among $Q_{\tau}^{Lin}(\sqcup_{i=1}^{n} ~ \Gamma_{i})$ and $\{ Q_{\tau}^{Lin}(\Gamma_{i})\}_{i=1}^{n}$? Do analogous results hold for the quantum symmetries of graph $C^*$-algebras? More precisely, is
(i) $Q_{\tau}^{Lin}(\sqcup_{i=1}^{n} ~ \Gamma_{i}) \cong Q_{\tau}^{Lin}(\Gamma_{1})* \cdots * Q_{\tau}^{Lin}(\Gamma_{n})$ if $\Gamma_{i}$'s are mutually non-isomorphic graphs?
(ii) $ Q_{\tau}^{Lin}(\sqcup_{i=1}^{n} ~ \Gamma_{i}) \cong  Q_{\tau}^{Lin}(\Gamma_1) \wr_* S_n^+$ if all  $\Gamma_i$'s are isomorphic?
However, the answers are negative in both of the scenarios. We have encountered non-isomorphic (even `quantum non-isomorphic') graphs, specifically $P_1$ and $So_2$ (see Figure \ref{P1So2} in Subsection \ref{counternonisom} for details), for which relation (i) does not hold in general. Interestingly, relation (ii) also does not hold when considering two disjoint copies of $P_1$ (see Figure \ref{P1P1}). But we have also found out a class of graph $C^*$-algebras, namely Cuntz algebras $\mathcal{O}_n$ (whose underlying graph is $L_n$), for which both relations (i) and (ii) hold in the sense that
\begin{equation*}
Q_{\tau}^{Lin}(\sqcup_{i=1}^{m} ~ L_{n_i}) \cong  *_{i=1}^{m} ~~ Q_{\tau}^{Lin}(L_{n_i}) \cong {U}_{n_1}^{+}*{U}_{n_2}^{+}* \cdots * {U}_{n_m}^{+},
\end{equation*}
if $L_{n_i}$'s are mutually non-isomorphic graphs (i.e., $n_i$'s are distinct) and 
\begin{equation*}
Q_{\tau}^{Lin}(\sqcup_{i=1}^{m} ~ L_n) \cong  Q_{\tau}^{Lin}(L_n) \wr_* S_m^+ \cong  U_n^+ \wr_* S_m^+.
\end{equation*}


Now, we briefly discuss the presentation of this article as follows: In the 2nd section, some pre-requisites are recalled about a directed graph, graph $C^*$-algebras, compact quantum groups and their action on a $C^*$-algebra, orthogonal filtration, quantum automorphism groups etc. Moreover, we recall the quantum symmetry of a graph $C^*$-algebra in the category introduced in \cite{Mandal} and \cite{Mandalkms}. We consider three categories on the direct sum of Cuntz algebras: (i) the category $\mathfrak{C}_{\tau}^{Lin}$  from \cite{Mandal}, (ii) a modified category of KMS state on the direct sum of Cuntz algebras, and (iii) the orthogonal filtration preserving category. In the 3rd section, we describe the quantum automorphism group of the direct sum of non-isomorphic Cuntz algebras in the same categories mentioned above. Additionally, we provide a counter-example to demonstrate that an analogous relation cannot be extended in general. In the 4th section, we explore the quantum automorphism group of isomorphic Cuntz algebras and provide a counter-example to illustrate that the same formula cannot be extended for an arbitrary graph $C^*$-algebra.

\section{Preliminaries}

\subsection{Notations and conventions}
For a set $X$,  $|X|$ will denote the cardinality of $X$ and $ id_{X} $ will denote the identity function on $X$. The $n$-set $\{1,2,3,...,n\}$ will be denoted by $[n]$. $ I_{n \times n}$ is the identity matrix on $ M_n(\mathbb{C})$. For a $C^*$-algebra $ \mathcal{B} $, $\mathcal{B}^*$  is the set of all linear bounded functionals on $ \mathcal{B} $. For a set $X$, span($X$) will denote the linear space spanned by the elements of $X$. The tensor product `$\otimes$' denotes the spatial or minimal tensor product between two $C^*$-algebras.\\
For us, all the $C^*$-algebras are unital.

\subsection{The Cuntz algebra }
The Cuntz algebra $\mathcal{O}_{n}$ was introduced by Cuntz in 1977 (\cite{Cuntz}).
An element $s$ in a $C^*$-algebra is an isometry if $s^*s = 1 .$
\begin{Def}
 The Cuntz algebra (with $n \in \mathbb{N}$ generators) $\mathcal{O}_{n}$ is the universal $C^*$-algebra generated by isometries $s_1,s_2,...,s_n$ such that $\sum_{i=1}^{n} s_{i}s_{i}^*=1 $,  i.e. $\mathcal{O}_{n}:= C^*\{s_1,s_2,...,s_n |  ~~s_{i}^*s_{i}=1 ~~\forall i \in [n], \sum_{i=1}^{n} s_{i}s_{i}^*=1\}$.\\
 Clearly, $\mathcal{O}_{1}$ is $C^*$-isomorphic to $C(S^1)$.
\end{Def}
\noindent  Moreover, since $K_0(\mathcal{O}_{n})= \mathbb{Z}_{n-1} $ for all $n \in \mathbb{N}$, $\mathcal{O}_{n}$ and $\mathcal{O}_{m}$ are isomorphic iff $m=n$ \cite{CuntzK}.\\

\noindent An important fact is that one can also view the Cuntz algebra (with $n$ generators) $\mathcal{O}_{n}$ as a graph $C^*$-algebra. Next, we will introduce graph $C^*$-algebra for some finite, directed graph.\\
A directed graph $\Gamma=\{ V(\Gamma), E(\Gamma),s,r \}$ consists of countable sets $ V(\Gamma) $ of vertices and $ E(\Gamma)$ of edges together with the maps $s,r: E(\Gamma) \to V(\Gamma) $ describing the source and range of the edges. We say that a vertex $v \in V(\Gamma)$ is \textbf{adjacent to} $w \in V(\Gamma)$ (denoted by $v \to w $)  if there exists an edge $e \in E(\Gamma)$ such that  $ v=s(e)$ and $ w=r(e) $. A graph is said to be \textbf{finite} if both $|V(\Gamma)|$ and $|E(\Gamma)|$ are finite. A directed {\bf graph without isolated vertices} means for every vertex $v \in V(\Gamma)$, at least one of $s^{-1}(v)$ and $r^{-1}(v)$ is non-empty.  A \textbf{directed path} $\alpha$ of length $n$ in a directed graph $\Gamma$ is a sequence $\alpha=e_{1}e_{2} \cdots e_{n} $ of edges in $\Gamma$ such that $ r(e_{i}) = s(e_{i+1}) $ for $1 \leq i \leq (n-1) $. We define $s(\alpha):=s(e_{1})$ and $r(\alpha):=r(e_{n})$. $E^{< \infty}(\Gamma)$ denotes the set of all finite length paths on $\Gamma$.
A \textbf{loop} is a graph with a vertex $v$ and an edge $e$ such that $s(e)=r(e)=v$.
Let $\Gamma=\{V(\Gamma),E(\Gamma),s,r \}$ be a finite, directed graph with $|V(\Gamma)|=n$. The {\bf adjacency matrix } of $\Gamma$ with respect to the ordering of the vertices $ (v_{1},v_{2},..., v_{n}) $ is a matrix $A(\Gamma)= (a_{ij})_{i,j= 1,2,...,n} $ with 
$ a_{ij} =
\begin{cases}
n(v_{i},v_{j}) & if ~~ v_{i} \to v_{j} \\
0 &  ~~ otherwise 
\end{cases} $ where $ n(v_{i},v_{j})$ denotes the number of edges joining $v_{i}$ to $v_{j}$.\\ 

\noindent In this article, we will define graph $C^*$-algebra only for a finite, directed graph. For more details about the theory of graph $C^*$-algebras, consult \cite{BHRS}, \cite{BPRS}, \cite{KPRR}, \cite{Kumjian}, \cite{Pask}, \cite{Raeburn} and \cite{MS}.  
\begin{Def} \label{Def_Graph C* algebra}
	Given a finite, directed graph $\Gamma$, the graph $C^{*}$-algebra $C^{*}(\Gamma)$ is a universal $C^{*}$-algebra generated by  orthogonal projections $ \{p_{v}: v \in V(\Gamma) \}$ and partial isometries $ \{S_{e}: e \in E(\Gamma) \} $  such that \vspace{0.1cm}
	\begin{itemize}
		\item[(i)] $S_{e}^{*}S_{e}=p_{r(e)}$ for all $ e \in E(\Gamma) $. \vspace{0.1cm}
		\item[(ii)] $p_{v}=\sum\limits_{\{f:s(f)=v\}}S_{f}S_{f}^{*}$ for all $ v \in V(\Gamma) $ if $s^{-1}(v) \neq \emptyset $.\\
	\end{itemize}
\end{Def} 
Examples:
\begin{enumerate}
	\item The Cuntz algebra (with $n$ generators) $\mathcal{O}_{n}$ can be thought of as a graph $C^*$-algebra with respect to  $L_n$ (Figure \ref{Ln}), a graph containing $n$ loops based at a single vertex, i.e. $C^*(L_n)$ is $C^*$-isomorphic to $\mathcal{O}_n$. 
	
\begin{center}
\begin{figure}[htpb]
\begin{tikzpicture}
\draw[fill=black] (0,0) circle (2pt) node[anchor=south]{$v_{1}$};
\draw[black,thick](0,0.5) circle (0.5) node[above=0.25]{\rmidarrow}node[above=0.5]{$e_{11}$};
\draw[black,thick](0,0.75) circle (0.75) node[above=0.5]{\rmidarrow} node[above=0.75]{$e_{22}$};
\draw[black,thick](0,1.25) circle (1.25) node[above=1]{\rmidarrow}node[above=1.25]{$e_{nn}$};
\draw[loosely dotted, black,thick] (0,2)--(0,2.5);
\end{tikzpicture}
\caption{$L_{n}$}
\end{figure} \label{Ln}
\end{center}

	\item For a directed path $P_n$ (Figure \ref{Pn}) of length $n$ containing  $(n+1)$ vertices, graph $C^*$-algebra $C^*(P_n)$ is isomorphic to the $C^*$-algebra $M_{n+1}(\mathbb{C})$.
	\begin{center}
		\begin{figure}[htb]
			\begin{tikzpicture}
			\draw[fill=black] (0,0) circle (2pt) node[anchor=north]{$v_{1}$};
			\draw[fill=black] (2,0) circle (2pt) node[anchor=north]{$v_{2}$};
			\draw[fill=black] (4,0) circle (2pt) node[anchor=north]{$v_{3}$};
			\draw[fill=black] (6,0) circle (2pt) node[anchor=north]{$v_{n}$};
			\draw[fill=black] (8,0) circle (2pt) node[anchor=north]{$v_{n+1}$};
			
			\draw[black,thick] (0,0)-- node{\rmidarrow}  node[above]{$e_{12}$} (2,0);
			\draw[black,thick] (2,0)-- node{\rmidarrow} node[above]{$e_{23}$} (4,0);
			\draw[loosely dotted, black,thick] (4.5,0)-- (5.5,0);
			\draw[black,thick] (6,0)--node{\rmidarrow}  node[above]{$e_{n(n+1)}$} (8,0);
			\end{tikzpicture}
			\caption{$P_{n}$}
		\end{figure}  \label{Pn}
	\end{center}
	
\end{enumerate}

For any graph $C^{*}$-algebra $C^*(\Gamma)$, we have the following results.
\begin{prop} \label{graphprop}
Let $\Gamma=\{V(\Gamma),E(\Gamma),s,r \}$ be a finite, directed graph. For a path $\gamma=e_1e_2...e_n \in E^{< \infty}(\Gamma)$, we define $S_{\gamma}:=S_{e_{1}}S_{e_{2}}...S_{e_{n}} $, where $e_1,...,e_n \in E(\Gamma)$.
	\begin{itemize}
		\item[(i)] $S_{\gamma}^{*}S_{\mu}=0$  $\forall \gamma, \mu \in E^{< \infty}(\Gamma)$ with $\gamma \neq \mu $. In particular, $S_{e}^{*}S_{f}=0$  $\forall e,f \in E(\Gamma)$ with $e \neq f $. \vspace{0.1cm}
		\item[(ii)] $\sum\limits_{v \in V(\Gamma)}p_{v}=1$.\vspace{0.1cm}
		\item[(iii)] $ S_{e}S_{f}\neq 0 \Leftrightarrow r(e)=s(f)$, i.e. $ef \text{ is a path of length 2} $.     \\
		Moreover, $S_{e_{1}}S_{e_{2}}...S_{e_{k}} \neq 0 \Leftrightarrow r(e_{i})=s(e_{i+1}) $ for $i=1,2,...,(k-1)$, i.e. $S_{\gamma} \neq 0$ for all $\gamma \in E^{< \infty}(\Gamma)$.\vspace{0.1cm}
		\item[(iv)] $ S_{\gamma}S_{\mu}^{*}\neq 0 \Leftrightarrow r(\gamma)=r(\mu)$ $\forall \gamma, \mu \in E^{< \infty}(\Gamma)$ . In particular, for all $e,f \in E$, $ S_{e}S_{f}^{*}\neq 0 \Leftrightarrow r(e)=r(f).$ \vspace{0.1cm}
		\item[(v)] $p_{s(e)}S_e=S_e p_{r(e)}=S_e $ for $e \in E(\Gamma)$. \vspace{0.1cm}
		\item[(vi)] $ span\{S_{\gamma}S_{\mu}^{*} : \gamma, \mu \in E^{< \infty}(\Gamma) \text{ with } r(\gamma)=r(\mu)\} $ is dense in $C^{*}(\Gamma)$.\\
	\end{itemize} 
\end{prop}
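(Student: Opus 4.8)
\emph{Proof idea.} The assertions (i), (ii), (v) and the multiplication rules underlying (iii), (iv), (vi) are formal consequences of the defining relations; the only genuinely non-formal ingredient is that every vertex projection in the \emph{universal} $C^*$-algebra $C^*(\Gamma)$ is nonzero, and this is where I expect the one real difficulty to sit. I would begin by unpacking Definition \ref{Def_Graph C* algebra}: by hypothesis the $p_v$ are mutually orthogonal projections, and since $S_e^*S_e=p_{r(e)}$ is a projection, each $S_e$ is a partial isometry. Reading relation (ii) of the definition, for a fixed $v$ with $s^{-1}(v)\neq\emptyset$, as an equality of the sum of the projections $S_fS_f^*$ ($s(f)=v$) with the projection $p_v$, one gets that these $S_fS_f^*$ are mutually orthogonal and each $\leq p_v$; in particular $S_eS_e^*\leq p_{s(e)}$ for every edge $e$. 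Part (v) is then immediate: $p_{s(e)}S_e=p_{s(e)}S_eS_e^*S_e=S_eS_e^*S_e=S_e$ and $S_ep_{r(e)}=S_eS_e^*S_e=S_e$, hence also $S_e^*p_{s(e)}=S_e^*=p_{r(e)}S_e^*$.

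For (ii) I would check that $q:=\sum_{v}p_v$ is a two-sided identity on the dense $*$-algebra generated by $\{p_v,S_e\}$: $qp_w=p_w$ by orthogonality, and $qS_e=p_{s(e)}S_e=S_e$, $S_eq=S_ep_{r(e)}=S_e$ by (v); since $C^*(\Gamma)$ is unital this forces $q=1$. For (i), first the single-edge case: if $e\neq f$ and $s(e)=s(f)$ then $S_e^*S_f=S_e^*(S_eS_e^*)(S_fS_f^*)S_f=0$ by range-orthogonality, while if $s(e)\neq s(f)$ then $S_e^*S_f=S_e^*p_{s(e)}p_{s(f)}S_f=0$ by (v). For paths one cancels matching leading edges using $S_{e_1}^*S_{f_1}=p_{r(e_1)}=p_{s(e_2)}$ and (v); the upshot is the standard rule that $S_\gamma^*S_\mu$ vanishes unless one of $\gamma,\mu$ is an initial segment of the other, with $S_\gamma^*S_\gamma=p_{r(\gamma)}$. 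In particular $S_\gamma^*S_\mu=0$ for distinct paths of equal length (and for distinct edges), which is the form of (i) actually used below.

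For (iii)--(iv): (v) gives $S_eS_f=S_ep_{r(e)}p_{s(f)}S_f$, so $r(e)\neq s(f)$ forces $S_eS_f=0$, and iterating this shows $S_\gamma\neq0$ forces $\gamma$ to be a bona fide path; the same reasoning with $S_\gamma$, $S_\mu^*$ gives $S_\gamma S_\mu^*=0$ when $r(\gamma)\neq r(\mu)$. Conversely, if $\gamma$ is a path then telescoping $S_e^*S_e=p_{r(e)}$ through the product yields $S_\gamma^*S_\gamma=p_{r(\gamma)}$, and likewise $(S_\gamma S_\mu^*)^*(S_\gamma S_\mu^*)=S_\mu p_{r(\gamma)}S_\mu^*$, which is $S_\mu S_\mu^*$ if $r(\gamma)=r(\mu)$ and $0$ otherwise. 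So every non-vanishing statement in (iii) and (iv) comes down to $p_{r(\gamma)}\neq0$, i.e. to the fact that the universal vertex projections are nonzero. This is the one point that pure relation-chasing does not settle; I would supply it by exhibiting a Cuntz-Krieger $\Gamma$-family with all projections nonzero --- e.g. the representation on the path space of $\Gamma$, or via the gauge-invariant uniqueness theorem --- exactly as in \cite{BPRS, KPRR, Raeburn}. I expect this to be the main obstacle; the rest is bookkeeping.

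For (vi), set $D=\mathrm{span}\{S_\gamma S_\mu^*:\gamma,\mu\in E^{<\infty}(\Gamma),\ r(\gamma)=r(\mu)\}$, allowing length-$0$ paths so that $p_v=S_vS_v^*\in D$, $S_e=S_ep_{r(e)}=S_eS_{r(e)}^*\in D$ and $S_e^*\in D$; thus $D$ contains every generator and is plainly self-adjoint. It is also closed under products: $(S_\alpha S_\beta^*)(S_\gamma S_\delta^*)=S_\alpha(S_\beta^*S_\gamma)S_\delta^*$, and by the initial-segment rule from (i), $S_\beta^*S_\gamma$ equals $0$, or $S_{\gamma'}$, or $S_{\beta'}^*$, or $p_{r(\beta)}$, so the product is again a scalar multiple of some $S_{\gamma''}S_{\delta''}^*$ with $r(\gamma'')=r(\delta'')$, or $0$. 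Hence $D$ is a $*$-subalgebra of $C^*(\Gamma)$ containing all generators, so $\overline{D}=C^*(\Gamma)$.
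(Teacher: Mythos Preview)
The paper does not supply a proof of this proposition: it is stated as standard background, with the reader referred to the graph-$C^*$-algebra literature cited just before Definition~\ref{Def_Graph C* algebra} (in particular \cite{Raeburn, BPRS, KPRR, MS}). Your sketch is correct and would serve as a self-contained proof; there is nothing to compare against in the paper itself.

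Two remarks are worth recording. First, you have correctly spotted that part~(i) as literally written is too strong: if $\gamma$ is a proper initial segment of $\mu=\gamma\mu'$ then $S_\gamma^*S_\mu=S_{\mu'}\neq0$. What is actually true (and what the paper uses, e.g.\ in the KMS-state computations) is the equal-length version, exactly as you note. Second, your identification of the nonvanishing of the universal vertex projections as the single non-formal ingredient in (iii)--(iv) is exactly right, and your proposed resolution---exhibit a nonzero Cuntz--Krieger $\Gamma$-family on the path space, or invoke gauge-invariant uniqueness as in \cite{BPRS, Raeburn}---is the standard one.
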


\subsection{Direct sum of $C^*$-algebras}
Direct sum of $C^*$-algebras $A_1, A_2,...,A_n$ is a $C^*$-algebra whose underlying set is $A_1 \oplus A_2 \oplus \cdots  \oplus A_n := \{(a_1,a_2,...,a_n)|a_i \in A_i ~~ \forall i \in [n] \}$ together with coordinate-wise operations and the unique $C^*$-norm is given by $\Vert (a_1,a_2,...,a_n) \Vert = max \{\Vert a_i \Vert : i \in [n] \}$.\\
If all $A_i$'s are unital C*-algebras with unities $1_{A_i}$ respectively, then $(1_{A_1},1_{A_2},...,1_{A_n} )$ is the unity of $A_1 \oplus A_2 \oplus \cdots  \oplus A_n $.
\subsubsection{\underline{A state on direct sum of $C^*$-algebras}}
Since every $C^*$-algebra always admits a state, we can naturally define a state on the direct sum. Let $\phi_i$ be a state on $C^*$-algebra $A_i$ for all $ i\in [n]$. We can naturally define a state $\oplus_{i=1}^{n}\phi_{i} $ on $ A_1 \oplus A_2 \oplus \cdots  \oplus A_n  $ by $\oplus_{i=1}^{n}\phi_{i}~~(a_1,a_2,...,a_n)= \frac{1}{n}[\phi_1 (a_1) + \phi_2 (a_2)+ \cdots + \phi_n (a_n)]$.
\subsubsection{\underline{Direct sum of graph $C^*$-algebras}}
 Let $\{ \Gamma_{i} \}_{i=1}^{m} $ be disjoint graphs. Then 
 \begin{equation*}
 C^*(\sqcup_{i=1}^{m} ~~ \Gamma_{i}) \cong \oplus_{i=1}^{m} C^*(\Gamma_{i}).
 \end{equation*}
  In particular, $ C^*(\sqcup_{i=1}^{m} ~~ L_{n_i}) \cong \oplus_{i=1}^{m} \mathcal{O}_{n_i}$ and  $C^*(\sqcup_{i=1}^{m} ~~ P_{n_i}) \cong \oplus_{i=1}^{m} M_{n_i+1}$.
\subsubsection{\underline{KMS State on Graph $C^*$-algebra at critical inverse temperature}} 
For a finite directed graph $\Gamma$ without sinks and isolated vertices, the spectral radius of the adjacency matrix of $\Gamma$ is denoted by $\rho(A(\Gamma))$. Let $\mathcal{P}$ be the probability measure on $V(\Gamma)$. 
$\mathcal{F}_{0}= \mathbb{C}1$ and $\mathcal{F}_{i}=span\{S_{\gamma}S_{\mu}^{*}: |\gamma|=|\mu|=i\}$ for $i \geq 1 $ are the finite dimensional algebras such that $\mathcal{F}_{i} \subset \mathcal{F}_{i+1}$. For the proof of the following proposition, see Proposition 4.1 of \cite{Laca} and Proposition 2.21  of \cite{Mandalkms}.   \\
\begin{prop} \label{kmsprop}
For a directed graph $\Gamma=\{ V(\Gamma), E(\Gamma),s,r \}$,\\
\begin{enumerate}
\item The  ${KMS}_{ln {\rho}(A(\Gamma))}$ state exists on $C^*(\Gamma)$ such that 
$${KMS}_{ln {\rho}(A(\Gamma))} (S_{\gamma}S_{\mu}^{*})= \begin{cases}                                    
                                \rho(A(\Gamma))^{-|\gamma|} ~\mathcal{P}_{r{(\gamma})} & if ~ \gamma= \mu \\
                                0 & ~ otherwise  \end{cases} $$ iff $A(\Gamma)\mathcal{P}=\rho(A(\Gamma))\mathcal{P}$. \\
 \item The graph $C^*$-algebra $C^*(\Gamma)$ has a state ${KMS}_{ln {\rho}(A(\Gamma))}$ which is faithful on each $\mathcal{F}_{k}$ iff $\rho(A(\Gamma))$ is an eigenvalue of $A(\Gamma)$ corresponding to an  eigenvector $(\mathcal{P}_{1},\mathcal{P}_{2},...,\mathcal{P}_{|V(\Gamma)|})$ with $ \mathcal{P}_{i} > 0~~\forall i \in [|V(\Gamma)|]$. If the eigenvector $(\mathcal{P}_{1},\mathcal{P}_{2},...,\mathcal{P}_{|V(\Gamma)|})$ is normalized with $ \sum\limits_{i \in [|V(\Gamma)|]} \mathcal{P}_{i} =1  $, then $${KMS}_{ln {\rho}(A(\Gamma))} (S_{\gamma}S_{\mu}^{*})= \begin{cases}                                    
                                \rho(A(\Gamma))^{-|\gamma|} ~\mathcal{P}_{r{(\gamma})} & if ~ \gamma= \mu \\
                                0 & ~ otherwise  \end{cases}. $$
\end{enumerate}
\end{prop}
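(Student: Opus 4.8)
The statement is classical --- as indicated in the excerpt, it is recorded in \cite{Laca} and \cite{Mandalkms} --- and the plan I would follow runs as follows. Write $\rho = \rho(A(\Gamma))$ and $\beta = \ln\rho$, and let $(\sigma_t)_{t\in\mathbb{R}}$ be the gauge dynamics on $C^*(\Gamma)$, i.e.\ the one-parameter automorphism group determined by $\sigma_t(p_v)=p_v$ and $\sigma_t(S_e)=e^{it}S_e$, so that $\sigma_t(S_\gamma S_\mu^*)=e^{it(|\gamma|-|\mu|)}S_\gamma S_\mu^*$ and every $S_\gamma S_\mu^*$ is entire analytic for $\sigma$. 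Let $\mathcal{F}=\overline{\bigcup_k \mathcal{F}_k}$ be the fixed-point algebra (the AF core) and $\Phi\colon C^*(\Gamma)\to\mathcal{F}$ the canonical faithful conditional expectation obtained by averaging over the gauge circle. The whole argument rests on analysing $\mathrm{KMS}_\beta$ states through their restriction to $\mathcal{F}$.

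For the forward direction of (1), I would apply the KMS identity $\phi(xy)=\phi\bigl(y\,\sigma_{i\beta}(x)\bigr)$ with $x=S_e$, $y=S_e^*$ to obtain $\phi(S_eS_e^*)=e^{-\beta}\phi(S_e^*S_e)=e^{-\beta}\phi(p_{r(e)})$ for each edge $e$; summing over $e\in s^{-1}(v)$ and using relation (ii) of Definition~\ref{Def_Graph C* algebra} gives, with $\mathcal{P}_v:=\phi(p_v)$,
\begin{equation*}
\mathcal{P}_v = e^{-\beta}\sum_{w\in V(\Gamma)} A(\Gamma)_{vw}\,\mathcal{P}_w \qquad (v\in V(\Gamma)),
\end{equation*}
that is, $A(\Gamma)\mathcal{P}=\rho\,\mathcal{P}$, while Proposition~\ref{graphprop}(ii) forces $\sum_v \mathcal{P}_v=\phi(1)=1$. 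Conversely, given $\mathcal{P}$ with $A(\Gamma)\mathcal{P}=\rho\,\mathcal{P}$, I would set $\tilde\phi(S_\gamma S_\mu^*)=\delta_{\gamma\mu}\,\rho^{-|\gamma|}\mathcal{P}_{r(\gamma)}$ on the core; the point is that the eigenvector equation is \emph{precisely} the compatibility condition making this consistent with the inclusions $\mathcal{F}_k\hookrightarrow\mathcal{F}_{k+1}$, under which $S_\gamma S_\gamma^*=\sum_{s(e)=r(\gamma)}S_{\gamma e}S_{\gamma e}^*$, so $\tilde\phi$ extends to a state of the AF algebra $\mathcal{F}$ and $\phi:=\tilde\phi\circ\Phi$ is a state on $C^*(\Gamma)$. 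One then checks on the spanning set of Proposition~\ref{graphprop}(vi) that $\phi$ satisfies the KMS condition for $\sigma$ at $\beta$, which again reduces to $A(\Gamma)\mathcal{P}=\rho\,\mathcal{P}$ together with the relations of Proposition~\ref{graphprop}.

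For (2), since $S_\mu^*S_\xi=\delta_{\mu\xi}\,p_{r(\mu)}$ for paths of equal length, $\mathcal{F}_k\cong\bigoplus_v M_{|P_k(v)|}(\mathbb{C})$, where $P_k(v)$ is the set of length-$k$ paths with range $v$, and $\{S_\gamma S_\mu^*:\gamma,\mu\in P_k(v)\}$ are matrix units; hence $\phi|_{\mathcal{F}_k}$ is the weighted trace $\bigoplus_v \rho^{-k}\mathcal{P}_v\operatorname{Tr}_v$, faithful on $\mathcal{F}_k$ exactly when $\mathcal{P}_v>0$ for every such range vertex $v$. Demanding this for all $k$ (and propagating through $A(\Gamma)\mathcal{P}=\rho\,\mathcal{P}$, using that there are no sinks) forces $\mathcal{P}_v>0$ for all $v$, i.e.\ $\mathcal{P}$ is a strictly positive eigenvector of $A(\Gamma)$ for $\rho$ --- the Perron--Frobenius condition in the statement --- and the displayed formula is just the normalization $\sum_v\mathcal{P}_v=1$ of the state built in (1). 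The step I expect to be the main obstacle, and for which I would invoke \cite{Laca} and \cite{Mandalkms}, is the well-definedness buried in the construction: that the weighted traces on the $\mathcal{F}_k$ really do assemble, through $\Phi$, into a \emph{bounded, positive} functional on all of $C^*(\Gamma)$, and that this functional genuinely satisfies the analytic KMS condition and not merely its consequence on generators.
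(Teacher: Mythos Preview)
Your proposal is correct and follows exactly the standard route of \cite{Laca} and \cite{Mandalkms}, which is precisely what the paper does: it offers no independent argument and simply refers the reader to Proposition~4.1 of \cite{Laca} and Proposition~2.21 of \cite{Mandalkms}. Your sketch (reduce the KMS condition to the eigenvector equation via $\phi(S_eS_e^*)=e^{-\beta}\phi(p_{r(e)})$, build the state on the AF core from a Perron eigenvector and push it through the gauge expectation, then read off faithfulness on each $\mathcal{F}_k$ from the matrix-unit decomposition) is exactly the content of those references, so there is nothing to compare.
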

 
\begin{cor}\label{directkmseq}
For the  Cuntz algebras $\{\mathcal{O}_{n_i}\}_{i=1}^{m}$, $\oplus_{i=1}^{m} ~~\mathcal{O}_{n_i} (\cong C^*(\sqcup_{i=1}^{m} ~L_{n_i}))$ has a ${KMS}_{ln(max\{n_i\}_{i=1}^{m})}$ state. Moreover,
if all $n_i$'s are equal, then $C^*(\sqcup_{i=1}^{m} ~L_{n_i})$ has a ${KMS}_{ln({n_1})}$ state which is faithful on each $\mathcal{F}_{k}$ such that
 $${KMS}_{ln({n_1})} (S_{\gamma}S_{\mu}^{*})= \begin{cases}                                    
                                \frac{{n_1}^{-|\gamma|}}{m} & if ~ \gamma= \mu \\
                                0 & ~ otherwise  \end{cases} $$
     and ${KMS}_{ln\rho(A(\sqcup_{i=1}^{m} ~L_{n_i}))} = \oplus_{i=1}^{m}  {KMS}_{ln \rho(A(L_{n_i}))} $.                          
\end{cor}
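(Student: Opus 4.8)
The plan is to read everything off Proposition~\ref{kmsprop} once the relevant adjacency matrices have been computed. Since $L_n$ has a single vertex and $n$ loops, $A(L_n)=[\,n\,]$ and hence $\rho(A(L_n))=n$. As the adjacency matrix of a disjoint union is the block-diagonal matrix built from the adjacency matrices of the pieces, $A\big(\sqcup_{i=1}^{m}L_{n_i}\big)=\mathrm{diag}(n_1,\dots,n_m)$, so $\rho\big(A(\sqcup_{i=1}^{m}L_{n_i})\big)=\max\{n_i\}_{i=1}^{m}$; combined with $C^*(\sqcup_{i=1}^{m}L_{n_i})\cong\oplus_{i=1}^{m}\mathcal{O}_{n_i}$ this already says that the temperatures appearing in the statement are the critical ones $\ln\rho$.

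For the first assertion I would apply Proposition~\ref{kmsprop}(1): a $\mathrm{KMS}_{\ln\rho}$ state exists if and only if there is a probability measure $\mathcal{P}=(\mathcal{P}_1,\dots,\mathcal{P}_m)$ on the vertex set with $\mathrm{diag}(n_1,\dots,n_m)\,\mathcal{P}=\max\{n_i\}\,\mathcal{P}$, i.e.\ $\mathcal{P}_i=0$ whenever $n_i<\max_j n_j$. Taking $\mathcal{P}$ to be the point mass at any vertex at which the maximum is attained produces such a measure, so the state exists. I would flag here the one genuine subtlety: this eigenvector cannot be chosen strictly positive unless all $n_i$ coincide, which is exactly why Proposition~\ref{kmsprop}(2), and with it faithfulness on the subalgebras $\mathcal{F}_k$, is available only in the equal case.

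For the ``moreover'' part, put $n:=n_1=\dots=n_m$, so $A\big(\sqcup_{i=1}^{m}L_n\big)=nI_{m\times m}$; then $\rho=n$ is an eigenvalue and the normalised vector $\mathcal{P}=(\tfrac1m,\dots,\tfrac1m)$ is a strictly positive eigenvector. Proposition~\ref{kmsprop}(2) then hands us a $\mathrm{KMS}_{\ln n}$ state that is faithful on every $\mathcal{F}_k$ and satisfies $\mathrm{KMS}_{\ln n}(S_\gamma S_\mu^*)=n^{-|\gamma|}\mathcal{P}_{r(\gamma)}$ if $\gamma=\mu$ and $0$ otherwise; since every vertex carries weight $\tfrac1m$ this is $\tfrac{n^{-|\gamma|}}{m}$, as claimed.

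Finally, for the identification $\mathrm{KMS}_{\ln\rho(A(\sqcup_{i=1}^{m}L_n))}=\oplus_{i=1}^{m}\mathrm{KMS}_{\ln\rho(A(L_n))}$ I would work on the norm-dense $*$-subalgebra $\mathrm{span}\{S_\gamma S_\mu^*\}$ of Proposition~\ref{graphprop}(vi). Every path in $\sqcup_{i=1}^{m}L_n$ lies inside a single copy, say the $j$-th, and $S_\gamma S_\mu^*$ is nonzero only when $\gamma$ and $\mu$ lie in the same copy, in which case under $C^*(\sqcup_{i=1}^{m}L_n)\cong\oplus_{i=1}^{m}\mathcal{O}_n$ it becomes the tuple with $S_\gamma S_\mu^*\in\mathcal{O}_n$ in the $j$-th slot and $0$ elsewhere. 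Evaluating $\oplus_{i=1}^{m}\mathrm{KMS}_{\ln n}$, which averages with weight $\tfrac1m$ by the definition of a state on a direct sum recalled above, gives $\tfrac1m\,\mathrm{KMS}_{\ln n}(S_\gamma S_\mu^*)=\tfrac1m n^{-|\gamma|}$ for $\gamma=\mu$ and $0$ otherwise, which are the same values found in the previous paragraph. Two states agreeing on a norm-dense subset coincide, so the proof is complete. The only labour beyond quoting Proposition~\ref{kmsprop} is this bookkeeping, namely checking that paths never cross copies and that the normalisation constant $\tfrac1m$ from the direct-sum state matches the weight $\mathcal{P}_{r(\gamma)}=\tfrac1m$ coming from the Perron eigenvector; I expect this to be the only (mild) obstacle.
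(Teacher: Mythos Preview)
Your proposal is correct and follows essentially the same approach as the paper: both invoke Proposition~\ref{kmsprop}(1) with a suitable probability eigenvector for existence, use the uniform vector $\tfrac{1}{m}(1,\dots,1)$ with Proposition~\ref{kmsprop}(2) in the equal case, and verify the final state identity by comparing values on the dense span $\{S_\gamma S_\mu^*\}$ after noting that paths cannot cross copies. Your write-up is in fact slightly more explicit than the paper's (you spell out the point-mass eigenvector for the general case and why strict positivity forces the equal-$n_i$ hypothesis), but there is no substantive difference in method.
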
 
\begin{proof}
	The existence of ${KMS}_{ln(max\{n_i\}_{i=1}^{m})}$ state directly follows from (1) of Proposition $\ref{kmsprop}$.\\
	If all $n_i$'s equal, then $\frac{1}{m}(1,1,...,1)$ is an eigenvector corresponding to the eigenvalue $n_1$. Hence, the faithfulness of state $KMS_{ln(n_1)}$ clearly follows from (2) of Proposition \ref{kmsprop}. \\
	Lastly, we need to show that ${KMS}_{ln\rho(A(\sqcup_{i=1}^{m} ~L_{n_i}))} = \oplus_{i=1}^{m}  {KMS}_{ln \rho(A(L_{n_i}))}.$ Observe that $\{ \gamma : \gamma \text{ is a path on } \sqcup_{i=1}^{m} L_{n_i} \}= \sqcup_{i=1}^{m} \{ \gamma : \gamma \text{ is a path on } L_{n_i}\}$. Using the fact $S_{\gamma}S_{\mu}=S_{\gamma}^*S_{\mu}=S_{\gamma}S_{\mu}^*=0$ for all $\gamma \in L_{n_i}, \mu \in L_{n_j}$ with $i \neq j$ (by Proposition \ref{graphprop}), one can show that $\overline{span} \{S_{\gamma}S_{\mu}^{*} : \gamma, \mu \in E^{< \infty}(\sqcup_{i=1}^{m} L_{n_i}) \} = \oplus_{i=1}^{m} \overline{span} \{S_{\gamma}S_{\mu}^{*} : \gamma, \mu \in E^{< \infty}(L_{n_i})\}$. Hence, by  $(vi)$ of Proposition \ref{graphprop}, the states ${KMS}_{ln\rho(A(\sqcup_{i=1}^{m} ~L_{n_i}))}$ and  $\oplus_{i=1}^{m}  {KMS}_{ln \rho(A(L_{n_i}))}$ have the same domain set. Moreover, for $\gamma, \mu \in L_{n_j}$,
	\begin{align*}
	{KMS}_{ln\rho(A(\sqcup_{i=1}^{m} ~L_{n_i}))}(S_{\gamma}S_{\mu}^{*})=& \begin{cases}                                    
	\frac{\rho(A(\sqcup_{i=1}^{m} L_{n_i}))^{-|\gamma|}}{m} & if ~ \gamma= \mu \\
	0 & ~ otherwise  \end{cases} \\=& \begin{cases}                                    
	\frac{{n_1}^{-|\gamma|}}{m} & if ~ \gamma= \mu \\
	0 & ~ otherwise  \end{cases} =\oplus_{i=1}^{m}  {KMS}_{ln \rho(A(L_{n_i}))}(S_{\gamma}S_{\mu}^{*}).
	\end{align*}
 \end{proof}  
                              
\subsection{Compact quantum groups and quantum automorphism groups } \label{CQG}
In this subsection, we will recall some important facts related to compact quantum groups and their actions on a given $C^{*}$-algebra. We refer the readers to \cite{Van}, \cite{Wang}, \cite{Wor}, \cite{Tim}, \cite{Nesh}, \cite{Fres} for more details.
\begin{Def}
	A compact quantum group (CQG) is a pair $(\mathcal{Q}, \Delta ) $, where $\mathcal{Q}$ is a unital $C^{*}$-algebra and $ \Delta : \mathcal{Q} \to \mathcal{Q} \otimes \mathcal{Q} $  is a unital $C^{*}$-homomorphism such that \vspace{0.1cm}
	\begin{itemize}
		\item[(i)] $(id_{\mathcal{Q}} \otimes \Delta)\Delta = (\Delta \otimes id_{\mathcal{Q}})\Delta $. \vspace{0.1cm}
		\item[(ii)] $ span\{ \Delta(\mathcal{Q})(1 \otimes \mathcal{Q} )\}$ and $ span\{\Delta(\mathcal{Q})(\mathcal{Q} \otimes 1)\} $ are dense in $(\mathcal{Q}\otimes \mathcal{Q})$. 
	\end{itemize}
	
\noindent Given two compact quantum groups, $(\mathcal{Q}_{1},\Delta_{1})$ and $(\mathcal{Q}_{2},\Delta_{2})$, a compact quantum group morphism (CQG morphism) between $\mathcal{Q}_{1}$ and $\mathcal{Q}_{2}$ is a $C^{*}$-homomorphism $ \phi: \mathcal{Q}_{1} \to \mathcal{Q}_{2} $ such that $ (\phi \otimes \phi)\Delta_{1}=\Delta_{2}\phi $ .
\end{Def}
     
For any CQG $\mathcal{Q}$, there exists a canonical dense Hopf $*$-algebra $\mathcal{Q}_{0} \subseteq  \mathcal{Q} $ in which one can define an antipode $\kappa$ and a counit $\epsilon$.\\

But in this article, we are interested in a special class of CQG called compact matrix quantum group.
\begin{Def} \label{CQMG1}
	Let $A$ be a unital $C^*$-algebra and $q=(q_{ij})_{n \times n}$ be a matrix whose entries are coming from $A$. $(A,q)$ is called a compact matrix quantum group (CMQG) if there exists a $*$-subalgebra $A'$ generated by the entries of $q$ such that the following conditions are satisfied:
	\begin{enumerate}
		\item[(i)] $A'$ dense in $A$.
		\item[(ii)] There exists a $C^*$-homomorphism $\Delta: A \to A \otimes A $ such that $\Delta(q_{ij})= \sum_{k=1}^{n}q_{ik} \otimes q_{kj}$ for all $i,j \in [n]$.
		\item[(iii)] There exists a linear anti-multiplicative map $\kappa: A' \to A'$ such that $\kappa(\kappa(a^*))^*=a$ for all $a \in A'$ and $q_{\kappa}:=(\kappa(q_{ij}))_{n \times n}$ is the inverse of $q$. 
	\end{enumerate}
\end{Def}

\noindent The following definition of CQMG is equivalent to Definition \ref{CQMG1} (see \cite{remark} for more details).
\begin{Def} \label{CMQG2}
	Let $A$ be a unital $C^*$-algebra and $q=(q_{ij})_{n \times n}$ be a matrix whose entries are coming from $A$. $(A,q)$ is called a compact matrix quantum group (CMQG) if the following conditions are satisfied:
	\begin{enumerate}
		\item[(i)] A is generated by the entries of matrix $q$,
		\item[(ii)] both $q$ and ${q}^t:=(q_{ji})_{n \times n}$ are invertible in $M_n(A)$,
		\item[(iii)] there exists a $C^*$-homomorphism $\Delta: A \to A \otimes A $ such that $\Delta(q_{ij})= \sum_{k=1}^{n}q_{ik} \otimes q_{kj}$ for all $i,j \in [n]$. 
	\end{enumerate}
	The matrix $q$ is called the fundamental representation of the CMQG $(A,q)$.
\end{Def} 

\begin{Def} \label{identical}
	Let $(A,q)$ and  $(A',q')$ be two CMQGs, where $A$ and $A'$ are unital $C^*$-algebras with fundamental representations $q=(q_{ij})_{n \times n}$ and $q'=(q_{ij}')_{n \times n}$ respectively.
	\begin{enumerate}
	\item $(A,q)$ and $(A',q')$ are said to be { \bf identical} (denoted by $(A,q) \approx (A',q')$) if there exists a $C^*$-isomorphism $ \phi: A \to A' $ such that $ \phi(q_{ij})=q_{ij}'$.
	\item 	$(A',q')$ is said to be a quantum subgroup of $(A,q)$ (denoted by $A' \subset A$) if there exists a surjective $C^*$-homomorphism $ \phi: A \to A' $ such that $ \phi(q_{ij})=q_{ij}'$.\\
	\end{enumerate}
\end{Def}

Next, we will see some examples of CMQGs which will appear in this article.\\

\noindent Examples:
\begin{enumerate}
	\item For $n \in \mathbb{N}$, $C(S_n^+)$ is the universal $C^{*}$-algebra generated by $\{u_{ij}\}_{i,j=1,2,...,n}$ such that  \vspace{0.1cm}
	\begin{itemize}
		\item[(i)]  $u_{ij}^{2}=u_{ij}=u_{ij}^{*} $ for all $i,j \in [n]$, \vspace{0.1cm}
		\item[(ii)]  $ \sum_{k=1}^{n} u_{ik}= \sum_{k=1}^{n} u_{kj}=1 $ for all $i,j \in [n]$. \vspace{0.1cm}
	\end{itemize} 
	Define a coproduct $ \Delta: C(S_n^+) \to C(S_n^+) \otimes C(S_n^+) $ on generators by $\Delta(u_{ij})= \sum_{k=1}^{n}u_{ik} \otimes u_{kj}$. Then $(S_n^+,\Delta)$ [respectively, $(S_n^+,u)$] is called a CQG [respectively, CMQG] whose underlying $C^*$-algebra is $C(S_n^+)$ (see \cite{Wang}, \cite{BBC} for more details).\\
	\item Let  $F \in \mathbb{GL}_{n}\mathbb{(C)}$ and $ A_{U^{t}}(F)$ be the universal $C^{*}$-algebra generated by  $\{q_{ij}: i,j \in [n]\}$ such that the matrix $U=(q_{ij})_{n \times n}$ satisfies the following conditions: \vspace{0.1cm}
	\begin{itemize}
		\item $(U^t)(U^t)^*=(U^t)^*(U^t)=I_{n \times n}$, i.e. $U^{t}$ is unitary. \vspace{0.1cm}
		\item $UF^{-1}U^{*}F=F^{-1}U^{*}FU=I_{n \times n}$. \vspace{0.1cm} 
	\end{itemize}
	Again, coproduct on generators $\{q_{ij} : {i,j \in [n]}\}$ is given by $\Delta(q_{ij})=\sum_{k=1}^{n} q_{ik}\otimes q_{kj} $. It can be shown that $(A_{U^{t}}(F),\Delta)$ is a CQG (as well as a CMQG with respect to the fundamental representation $U$).\\
	If $ F=I_{n \times n} $, then we write $ A_{U^{t}}(F) $ as $U_{n}^{+}$, i.e.
	  $(U_{n}^{+},\Delta):=(A_{U^{t}}(I_{n \times n}),\Delta)$  (consult \cite{Van} for details).\\

	
	
	\item  $C(H_{n}^{\infty +})$ is defined to be the universal $C^*$-algebra generated by $\{u_{ij}: i,j \in [n]\}$ such that \vspace{0.1cm}
	\begin{itemize}
		\item the matrices $u=(u_{ij})_{n \times n}$ and $(u_{ij}^{*})_{n \times n}$ are unitaries, \vspace{0.1cm}
		\item $u_{ij}$'s are normal partial isometries for all $i,j$. \vspace{0.1cm}
	\end{itemize}
	The coproduct $\Delta $ on generators is again given by $\Delta(u_{ij})=\sum_{k=1}^{n} u_{ik}\otimes u_{kj}$. Then $(C(H_{n}^{\infty +}), \Delta)$ forms a CQG. Moreover, the CMQG $(C(H_{n}^{\infty +}), u)$ forms a unitary easy quantum group with respect to the category of partition $\mathcal{C}_0 =\{ \begin{tikzpicture}[scale=0.5]
		\draw[fill=black] (0,0) circle (3pt);
		\draw[fill=black] (0,1) circle (3pt);
		\draw (0.6,0) circle (3pt);
		\draw (0.6,1) circle (3pt);
		\draw[black] (0,0.05)--(0,0.3);
		\draw[black] (0,0.95)--(0,0.7);
		\draw[black] (0.6,0.05)--(0.6,0.3);
		\draw[black] (0.6,0.95)--(0.6,0.7);
		\draw[black] (0,0.3)--(0.6,0.3);
		\draw[black] (0,0.7)--(0.6,0.7);
		\draw[black] (0.3,0.3) -- (0.3,0.7);\end{tikzpicture}, 
	\begin{tikzpicture}[scale=0.5]
		\draw (0,0) circle (3pt);
		\draw (0,1) circle (3pt);
		\draw[fill=black] (0.6,0) circle (3pt);
		\draw[fill=black] (0.6,1) circle (3pt);
		\draw[black] (0,0.05)--(0,0.3);
		\draw[black] (0,0.95)--(0,0.7);
		\draw[black] (0.6,0.05)--(0.6,0.3);
		\draw[black] (0.6,0.95)--(0.6,0.7);
		\draw[black] (0,0.3)--(0.6,0.3);
		\draw[black] (0,0.7)--(0.6,0.7);
		\draw[black] (0.3,0.3) -- (0.3,0.7);\end{tikzpicture}  , \begin{tikzpicture}[scale=0.5]
		\draw (0,0) circle (3pt);
		\draw (0,1) circle (3pt);
		\draw (0.6,0) circle (3pt);
		\draw (0.6,1) circle (3pt);
		\draw[black] (0,0.05)--(0,0.3);
		\draw[black] (0,0.95)--(0,0.7);
		\draw[black] (0.6,0.05)--(0.6,0.3);
		\draw[black] (0.6,0.95)--(0.6,0.7);
		\draw[black] (0,0.3)--(0.6,0.3);
		\draw[black] (0,0.7)--(0.6,0.7);
		\draw[black] (0.3,0.3) -- (0.3,0.7);\end{tikzpicture} \}$  (see \cite{easy} for the details on the unitary easy quantum group). We denote this CQG (or CMQG) by $H_{n}^{\infty +}$.\\
	
	\item  Consider the universal $C^*$-algebra generated by $\{u_{ij}: i,j \in [n]\}$ such that \vspace{0.1cm}
	\begin{enumerate}
		\item[(i)] the matrices $(u_{ij})_{n \times n}$ and $(u_{ij}^{*})_{n \times n}$ are unitaries, \vspace{0.1cm}
		\item[(ii)] $u_{ij}$'s are partial isometries for all $i,j$. \vspace{0.1cm}
	\end{enumerate}
	and denote it by  $C(SH_{n}^{\infty +})$.
	Observe that condition (ii) can be replaced by the condition $`` u_{ik}u_{jk}^*=u_{ik}^*u_{jk}=0 ~~ \forall i,j,k \in [n] \text{ with } i \neq j"$.
	Similarly, the coproduct $\Delta $ on generators is given by $\Delta(u_{ij})=\sum_{k=1}^{n} u_{ik}\otimes u_{kj}$.
	 Thus, the CMQG $(C(SH_{n}^{\infty +}),u)$ constitutes a unitary easy quantum group with respect to the category of partition $\mathcal{C}_0=\{\begin{tikzpicture}[scale=0.5]
		\draw[fill=black] (0,0) circle (3pt);
		\draw[fill=black] (0,1) circle (3pt);
		\draw (0.6,0) circle (3pt);
		\draw (0.6,1) circle (3pt);
		\draw[black] (0,0.05)--(0,0.3);
		\draw[black] (0,0.95)--(0,0.7);
		\draw[black] (0.6,0.05)--(0.6,0.3);
		\draw[black] (0.6,0.95)--(0.6,0.7);
		\draw[black] (0,0.3)--(0.6,0.3);
		\draw[black] (0,0.7)--(0.6,0.7);
		\draw[black] (0.3,0.3) -- (0.3,0.7);\end{tikzpicture}, 
	\begin{tikzpicture}[scale=0.5]
		\draw (0,0) circle (3pt);
		\draw (0,1) circle (3pt);
		\draw[fill=black] (0.6,0) circle (3pt);
		\draw[fill=black] (0.6,1) circle (3pt);
		\draw[black] (0,0.05)--(0,0.3);
		\draw[black] (0,0.95)--(0,0.7);
		\draw[black] (0.6,0.05)--(0.6,0.3);
		\draw[black] (0.6,0.95)--(0.6,0.7);
		\draw[black] (0,0.3)--(0.6,0.3);
		\draw[black] (0,0.7)--(0.6,0.7);
		\draw[black] (0.3,0.3) -- (0.3,0.7);\end{tikzpicture} \}$ (see section 4 of \cite{easy} for details on the unitary easy quantum group).  We denote this CQG (or CMQG) by $SH_{n}^{\infty +}$.\\
	
	\item Let $ n \in \mathbb{N} $ and $(Q, \Delta_{Q})$ be a CQG. The free wreath product of $Q$ by the quantum permutation group $S_n^+$, denoted by $Q \wr_{*} S_n^+$, is the quotient of the algebra $\underbrace{Q*Q*\cdots*Q}_{n-times}*S_n^+$ by the ideals of the form $(i_k(a)t_{kl}-t_{kl}i_k(a))$ for $ k,l \in [n], a \in Q $ and $ i_k:Q \to \underbrace{Q*Q*\cdots*Q}_{n-times}*S_n^+ $ is the natural inclusion. Define the coproduct $\Delta : Q \wr_{*} S_n^+ \to (Q \wr_{*} S_n^+) \otimes (Q \wr_{*} S_n^+)$ by $\Delta(i_k(a))=\sum\limits_{l=1}^{n} i_k \otimes i_l(\Delta_{Q}(a))(t_{kl} \otimes 1)$ and $\Delta(t_{kl})=\sum\limits_{r=1}^{n} t_{kr} \otimes t_{rl}$. Moreover, the counit satisfies $\epsilon(t_{kl})=\delta_{kl}$ and $ \epsilon(i_k(a))=\epsilon_{Q}(a)$, where $\epsilon_{Q}$ is the counit of $(Q, \Delta_{Q})$ (see \cite{Wangfree}, \cite{Bichonwreath}).\\
	
\end{enumerate}

Next, we will discuss the CQG action and the quantum symmetry of a $C^*$-algebra from the categorical viewpoint. The readers are referred to \cite{Wang} and \cite{Bichon} for the following definitions and discussions.
\begin{Def}
A CQG $(\mathcal{Q},\Delta)$ is said to be acting faithfully on a unital $C^{*}$-algebra $\mathcal{C}$ if there exists a unital $C^{*}$-homomorphism $\alpha:\mathcal{C} \to \mathcal{C} \otimes \mathcal{Q} $ such that: \vspace{0.1cm}
\begin{itemize}
		\item[(i)] Action equation: $(\alpha \otimes id_{\mathcal{Q}})\alpha = (id_{\mathcal{C}} \otimes \Delta)\alpha.$ \vspace{0.1cm}
		\item[(ii)] Podle\'s condition: $span\{\alpha(\mathcal{C})(1\otimes \mathcal{Q})\}$ is dense in $\mathcal{C} \otimes \mathcal{Q}.$ \vspace{0.1cm}
		\item[(iii)] Faithfulness: The $*$-algebra generated by the set $\{(\theta\otimes id)\alpha(\mathcal{C}) : \theta \in \mathcal{C}^{*}\}$ is norm-dense in $\mathcal{Q}$.\vspace{0.1cm}
\end{itemize}
\end{Def}
$((\mathcal{Q},\Delta),\alpha)$ is also called a \textbf{quantum transformation group of $\mathcal{C} $}.\\

\noindent Given a unital $C^{*}$-algebra $ \mathcal{C} $, the \textbf{category of quantum transformation groups of $\mathcal{C}$} is a \textbf{category} $ \mathfrak{C} $ containing quantum transformation groups of $\mathcal{C}$ as objects and a morphism from $((\mathcal{Q}_{1},\Delta_{1}),\alpha_{1})$ to $((\mathcal{Q}_{2},\Delta_{2}),\alpha_{2})$  be a CQG morphism $\phi :(\mathcal{Q}_{1},\Delta_{1}) \to (\mathcal{Q}_{2},\Delta_{2})$ such that $ (id_{\mathcal{C}} \otimes \phi)\alpha_{1}=\alpha_{2} $.\\
The \textbf{universal object of the category $ \mathfrak{C} $} is a quantum transformation group of $\mathcal{C}$, denoted by $((\widehat{\mathcal{Q}},\widehat{\Delta}),\widehat{\alpha})$, satisfying the following universal property :\\
For any object $((\mathcal{D}, \Delta_{\mathcal{D}}),\delta)$ from the category of quantum transformation groups of $\mathcal{C}$, there is a surjective CQG morphism $\widehat{\phi}: (\widehat{\mathcal{Q}}, \widehat{\Delta}) \to (\mathcal{D}, \Delta_{\mathcal{D}}) $ such that $(id_{\mathcal{C}} \otimes \widehat{\phi})\widehat{\alpha}=\delta $. 

\begin{Def}
	Given a unital $C^{*}$-algebra $\mathcal{C}$, the { \bf quantum automorphism group } of $ \mathcal{C} $ is the underlying CQG of the universal object of the category of quantum transformation group of $\mathcal{C}$ if the universal object exists.
\end{Def}

\begin{rem}
	In the above category, the universal object might fail to exist in general. One can recover from this situation by restricting the category to a subcategory so that a universal object would exist.	
	Take a linear functional $ \tau: \mathcal{C} \to \mathbb{C} $. Define a subcategory $\mathfrak{C}_{\tau}$ whose objects are those quantum transformation groups of $\mathcal{C}$, $((\mathcal{Q},\Delta),\alpha)$ for which $(\tau \otimes id)\alpha(.)=\tau(.).1 $ on a suitable subspace of $\mathcal{C}$ and morphisms are taken as the above. 
\end{rem}

\noindent Examples:
\begin{enumerate}
	\item For $n$ points space $X_{n}$, the universal object in the category of quantum transformation group of $C(X_{n})$ exists and is isomorphic to the quantum permutation group, $S_{n}^{+}$  (see \cite{Wang}, \cite{BBC} for more details).\\
	
	\item For the $C^{*}$-algebra $ M_{n}(\mathbb{C}) $, the universal object in the category of quantum transformation group of $ M_{n}(\mathbb{C}) $ (for $n \geq 2 $) does not exist. But if we fix a linear functional $\tau'$ on $ M_{n}(\mathbb{C}) $ which is defined by $\tau'(A)=Tr(A)$ and assume that any object of the category also preserves $\tau'$, i.e. $(\tau' \otimes id)\alpha(.)=\tau'(.).1 $ on $M_{n}(\mathbb{C})$, then the universal object exists in $\mathfrak{C}_{\tau'}$ (see \cite{Wang} for more details).
\end{enumerate}

\subsection{Quantum symmetry in an orthogonal filtration preserving way}
In this subsection, we recall the quantum symmetry of a $C^*$-algebra equipped with an orthogonal filtration with respect to a given state $\phi$ on that $C^*$-algebra introduced by Banica and Skalski in \cite{Ortho}. 

\begin{Def}
	Let $\mathcal{C}$ be a unital $C^{\ast}$-algebra together with a faithful state $\phi$ and a family $\{F_{i}\}_{i\in \mathcal{I}}$ of finite dimensional subspaces of $\mathcal{C}$ (where  $\mathcal{I}$ is the index set containing a distinguished element $0$). The collection $(\{F_{i}\}_{i\in \mathcal{I}}, \phi)$ defines an orthogonal filtration on $\mathcal{C}$ if the following conditions are satisfied:
	\begin{enumerate}
		\item $F_{0}=\mathbb{C}1_{\mathcal{C}}$;
		\item If $a\in F_{i},b\in F_{j}$ for $i,j \in \mathcal{I}$ with $i \neq j $, then $\phi(a^{*}b)=0$;
		\item The set $ span(\cup_{i\in \mathcal{I}} F_{i})$ is dense in $\mathcal{C}$.
	\end{enumerate}
\end{Def}
\noindent {Examples:}
\begin{enumerate}
	\item Any unital separable AF algebra admits an orthogonal filtration.\\
	
	\item  In Section 5 of \cite{Mandalkms}, the authors provide an orthogonal filtration for Cuntz algebra $\mathcal{O}_{n}$ (viewing it as a graph $C^*$-algebra with respect to the graph $ L_{n}$). We will similarly define an orthogonal filtration on the direct sum of Cuntz algebras $\oplus_{i=1}^{m} \mathcal{O}_{n_i}$ (whose underlying graph is $\sqcup_{i=1}^{m} L_{n_i}$). Let us define $\mathcal{F}_{0}= \mathbb{C}1$ and $\mathcal{F}_{i}=span\{S_{\gamma}S_{\mu}^{*}: |\gamma|=|\mu|=i\}$ for $i \geq 1 $. Note that $\mathcal{F}_{i} \subset \mathcal{F}_{i+1} $. Consider the KMS state $KMS_{ln(n_i)}$ on $C^*(L_{n_i})$ and $\oplus_{i=1}^{m} KMS_{ln(n_i)}$ on $C^*(\sqcup_{i=1}^{m} L_{n_i})$. Now, define finite dimensional vector subspaces: $\mathcal{W}_0=\mathcal{F}_{0}= \mathbb{C}1$ and 
	$\mathcal{W}_i=\mathcal{F}_{i} \ominus \mathcal{F}_{i-1}$, the orthogonal complement of $\mathcal{F}_{i-1}$ in $\mathcal{F}_{i}$ for $i \geq 1$. Moreover, we define the finite-dimensional subspaces
	$$ \mathcal{M}_{k,l}^{(1)}:= span\{S_{\mu}x: |\mu|=l,x \in \mathcal{W}_{k}\}$$ and $$ \mathcal{M}_{k,l}^{(2)}:= span\{ yS_{\nu}^*: |\nu|=l, y \in \mathcal{W}_{k}\}$$ for $(k,l) \in \mathbb{N}_{0} \times \mathbb{N}$.\\
	Now, the collection $(\{ \mathcal{W}_{i}, \mathcal{M}_{p,q}^{(1)}, \mathcal{M}_{r,s}^{(2)} : i \in \mathbb{N}_{0}, (p,q) \in \mathbb{N}_{0} \times \mathbb{N}, (r,s) \in \mathbb{N}_{0} \times \mathbb{N} \}, \oplus_{i=1}^{m} KMS_{ln(n_i)})$ is an orthogonal filtration on $\oplus_{i=1}^{m} \mathcal{O}_{n_i}$.\\
	For the proof, we would like to mention a key fact: if we take  any two paths $\gamma$ and  $\mu$ from $L_k$ and $L_l$ respectively (for $k \neq l$), then $S_{\gamma} S_{\mu}=S_{\gamma}^* S_{\mu}=S_{\gamma}S_{\mu}^*=0$ (by Proposition \ref{graphprop}). 
	Now, the arguments can be adopted from Theorem 5.3 of \cite{Mandalkms}. Though one can notice that Lemma 5.1 of \cite{Mandalkms} does not hold for $\oplus_{i=1}^{m} \mathcal{O}_{n}$ with respect to the state $\oplus_{i=1}^{m} KMS_{ln(n_i)}$, it's true for each individual $\mathcal{O}_{n_i}$ with the state $ KMS_{ln(n_i)}$. Hence, the fact mentioned above ensures that similar proof works in a trouble-free way. \\ 
		 
\end{enumerate}

\begin{Def} \label{orthofilpreway}	
	For an orthogonal filtration $ \mathfrak{F}= (\{F_{i}\}_{i\in \mathcal{I}}, \phi)$ on a unital $C^*$-algebra $\mathcal{C}$, we say that  a CQG $(\mathcal{Q},\Delta)$ acts on $\mathcal{C}$ by $\alpha$ in a filtration preserving way if  
	$$	\alpha(F_{i})\in F_{i}\otimes  \mathcal{Q} \ \  \text{for all} \ \  i\in \mathcal{I} . $$	
\end{Def}

\begin{Def}
	 We define a category $\mathfrak{C}_{\mathfrak{F}}(\mathcal{C})$ whose objects are $((\mathcal{Q},\Delta), \alpha)$ such that $(\mathcal{Q},\Delta)$ acts on $\mathcal{C}$ in an orthogonal filtration preserving way in the sense of Definition \ref{orthofilpreway} and a morphism from $((\mathcal{Q}_{1},\Delta_{1}),\alpha_{1})$ to $((\mathcal{Q}_{2},\Delta_{2}),\alpha_{2})$ is given by a CQG morphism $\Phi:\mathcal{Q}_{1} \to \mathcal{Q}_{2} $ such that $ (id_{\mathcal{C}} \otimes \phi)\alpha_{1}=\alpha_{2} $.	
\end{Def}

\noindent For the proof of the theorem below, consult Theorem 2.7 from \cite{Ortho}.
\begin{thm}
	 For an orthogonal filtration $\mathfrak{F}=(\{F_{i}\}_{i\in \mathcal{I}}, \phi)$ on a unital $C^*$-algebra $\mathcal{C}$, there exists a universal object in the category $\mathfrak{C}_{\mathfrak{F}}(\mathcal{C})$.
\end{thm}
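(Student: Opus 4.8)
The plan is to construct the universal object by hand, by the standard Wang-type procedure adapted to the filtration setting as in \cite{Ortho}. Since $\phi$ is faithful, for each $i\in\mathcal{I}$ the form $\langle a,b\rangle_i:=\phi(a^*b)$ is a genuine inner product on the finite-dimensional space $F_i$; fix once and for all an orthonormal basis $\{e^i_1,\dots,e^i_{d_i}\}$ of $F_i$, with $e^0_1=1_{\mathcal{C}}$. If $((\mathcal{Q},\Delta),\alpha)$ is any object of $\mathfrak{C}_{\mathfrak{F}}(\mathcal{C})$, then $\alpha(F_i)\subseteq F_i\otimes\mathcal{Q}$ lets us write $\alpha(e^i_k)=\sum_l e^i_l\otimes q^i_{lk}$ for uniquely determined $q^i_{lk}=q^i_{lk}(\mathcal{Q})\in\mathcal{Q}$, organised into matrices $u^i=(q^i_{lk})\in M_{d_i}(\mathcal{Q})$. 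A preliminary observation is that a filtration-preserving action automatically preserves the state: since $F_i\perp F_0=\mathbb{C}1$ for $i\neq 0$ one has $(\phi\otimes\mathrm{id})\alpha=\phi(\cdot)1$ on $\mathrm{span}(\cup_i F_i)$, hence by density and continuity on all of $\mathcal{C}$; applying this to $(e^i_k)^*e^i_m$ gives $\sum_l (q^i_{lk})^*q^i_{lm}=\delta_{km}1$, i.e. $(u^i)^*u^i=I_{d_i}$, so in particular every $q^i_{lk}$ is a contraction.

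Next I would set up the universal algebra. Let $B$ be the universal unital $C^*$-algebra generated by symbols $\{q^i_{lk}:i\in\mathcal{I},\ 1\le l,k\le d_i\}$ subject to the relations $(u^i)^*u^i=I_{d_i}$ for all $i$ --- which already makes the generators contractive, so $B$ exists --- together with all relations forcing the formula $\tilde\alpha_0(e^i_k)=\sum_l e^i_l\otimes q^i_{lk}$, extended $*$-linearly and multiplicatively from the free $*$-algebra $T$ on $\cup_i F_i$ (using the multiplication of $\mathcal{C}$ on the left tensor leg), to vanish on the kernel of the canonical surjection $T\twoheadrightarrow\mathcal{C}_0:={}^*\text{-alg}(\cup_i F_i)\subseteq\mathcal{C}$; thus $\tilde\alpha_0$ descends to a $*$-homomorphism $\mathcal{C}_0\to\mathcal{C}\otimes B$. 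The key structural point is that there is a \emph{smallest} closed two-sided $*$-ideal $J_0\subseteq B$ such that $\tilde\alpha_0$ extends continuously to a genuine $C^*$-homomorphism $\tilde\alpha:\mathcal{C}\to\mathcal{C}\otimes(B/J_0)$: the property ``$\tilde\alpha_0$ extends to such a $*$-homomorphism'' is stable under arbitrary intersections of the admissible ideals, so one takes $J_0$ to be that intersection and sets $\mathcal{Q}_u:=B/J_0$.

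The remaining verifications are routine but should be done in order. Define $\Delta_u$ on generators by $\Delta_u(q^i_{lk})=\sum_m q^i_{lm}\otimes q^i_{mk}$; it is well defined because the matrices $\bigl(\sum_m q^i_{lm}\otimes q^i_{mk}\bigr)$ again satisfy $(u^i)^*u^i=I$ and because the compatibility relations are preserved --- this is the identity $(\tilde\alpha\otimes\mathrm{id})\tilde\alpha=(\mathrm{id}\otimes\Delta_u)\tilde\alpha$ read off on $\mathcal{C}_0$. Coassociativity of $\Delta_u$ is immediate from that of matrix comultiplication; one upgrades each isometry $u^i$ to a unitary (so that $(\mathcal{Q}_u,\Delta_u)$ is a CQG and the Podle\'s density conditions for $\Delta_u$ hold) exactly as in \cite{Ortho}, using the action axioms together with the modular data of $\phi$. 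Then $\tilde\alpha$ is a faithful filtration-preserving action: the action equation is the well-definedness of $\Delta_u$, the Podle\'s condition for $\tilde\alpha$ follows from unitarity of the $u^i$, faithfulness is automatic since $\mathcal{Q}_u$ is generated by $q^i_{lk}=(\theta^i_l\otimes\mathrm{id})\tilde\alpha(e^i_k)$ for any Hahn--Banach extensions $\theta^i_l\in\mathcal{C}^*$ of $a\mapsto\langle e^i_l,a\rangle_i$, and $\tilde\alpha(F_i)\subseteq F_i\otimes\mathcal{Q}_u$ by construction. Hence $((\mathcal{Q}_u,\Delta_u),\tilde\alpha)\in\mathfrak{C}_{\mathfrak{F}}(\mathcal{C})$.

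Finally, universality: for any object $((\mathcal{Q},\Delta),\alpha)$ the coefficients $q^i_{lk}(\mathcal{Q})$ satisfy $(u^i)^*u^i=I$ (first paragraph) and all the compatibility relations (because $\alpha$ genuinely \emph{is} a $*$-homomorphism), hence define a unital $*$-homomorphism $B\to\mathcal{Q}$ which annihilates $J_0$ (as $\alpha$ itself witnesses that $\tilde\alpha_0$ extends over $\mathcal{Q}$), and thus factors through a CQG morphism $\pi:\mathcal{Q}_u\to\mathcal{Q}$ with $(\mathrm{id}_{\mathcal{C}}\otimes\pi)\tilde\alpha=\alpha$; surjectivity of $\pi$ uses faithfulness of $\alpha$, and compatibility with coproducts is checked on generators. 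I expect the main obstacle to be the existence of the minimal ideal $J_0$, i.e. establishing that ``inducing a well-defined $C^*$-algebraic action'' is preserved under intersections of ideals --- the delicate point common to all universal-quantum-group arguments, which is exactly what Theorem 2.7 of \cite{Ortho} provides --- while a secondary technical nuisance is promoting each coefficient matrix $u^i$ from an isometry to a unitary, which genuinely uses the faithfulness of $\phi$ and its modular automorphism group rather than the algebraic relations alone.
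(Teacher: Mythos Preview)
The paper does not prove this theorem at all: it simply refers the reader to Theorem~2.7 of \cite{Ortho} (Banica--Skalski). Your sketch is precisely an outline of the Banica--Skalski argument, so in that sense the approaches coincide.

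One comment on the write-up: at the end you say the main obstacle --- the existence of the minimal ideal $J_0$ --- ``is exactly what Theorem~2.7 of \cite{Ortho} provides''. But Theorem~2.7 of \cite{Ortho} \emph{is} the statement you are trying to prove, so invoking it at this point is circular. If you intend the sketch to be self-contained, you need to actually argue that the class of ideals $J\lhd B$ for which $\tilde\alpha_0$ extends to a $C^*$-homomorphism $\mathcal{C}\to\mathcal{C}\otimes(B/J)$ is closed under arbitrary intersections (equivalently, use the standard trick of embedding $B/\bigcap J_\lambda$ into $\prod_\lambda B/J_\lambda$); otherwise, you should cite \cite{Ortho} for the full proof, exactly as the paper does, and drop the pretence of reproving it. The same applies to the isometry-to-unitary upgrade, which you correctly flag as nontrivial and which genuinely requires the modular arguments in \cite{Ortho}.
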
	

\noindent We denote the underlying CQG of the universal object by $Q_{\mathfrak{F}}(\mathcal{C})$ in the category $\mathfrak{C}_{\mathfrak{F}}(\mathcal{C})$.

\subsection{Quantum symmetry of a graph $C^{*}$-algebra} \label{QSgraph}
Let $\Gamma=\{V(\Gamma), E(\Gamma), s, r\}$ be a finite directed graph without isolated vertices. Therefore, to define a CQG action on $C^*(\Gamma)$, it is enough to define an action on the partial isometries corresponding to the edges of $\Gamma$. To describe the quantum symmetries of a graph $C^*$-algebra, { \bf we restrict ourselves to a finite directed graph without isolated vertices} and we simply call it a graph.   
\subsubsection{\underline{Quantum symmetry of a graph $C^{*}$-algebra in the category $\mathfrak{C}_{\tau}^{Lin}$ introduced in \cite{Mandal}}}
\begin{Def}(Definition 3.4 of \cite{Mandal})
	Given a graph $\Gamma$, a faithful action $\alpha$ of a CQG $\mathcal{Q}$ on a $C^{*}$-algebra $C^{*}(\Gamma)$ is said to be linear if $ \alpha(S_{e})=\sum_{f \in E(\Gamma)} S_{f} \otimes q_{fe}$, where $q_{ef}\in \mathcal{Q}$ for each $e,f \in E(\Gamma)$.
\end{Def}

\noindent Take
\begin{itemize}
	\item[(1)] $\mathcal{V}=\{u\in V(\Gamma) : u \text{ is not a source of any edge of } \Gamma\}$ \vspace{0.1cm}.
	\item[(2)] $ \mathcal{E}= \{(e,f) \in E(\Gamma) \times E(\Gamma) : S_{e}S_{f}^{*} \neq 0 \}=\{(e,f) \in E(\Gamma) \times E(\Gamma) : r(e)=r(f) \}$. \vspace{0.1cm}
\end{itemize}
In Lemma 3.2 of \cite{Mandal}, it was shown that $\{ p_{u}, S_{e}S_{f}^{*} : u \in \mathcal{V}, (e,f) \in \mathcal{E} \}$ is a linearly independent set. \\[0.2cm]
Now, define $\mathcal{V}_{2,+}= span\{ p_{u}, S_{e}S_{f}^{*} : u \in \mathcal{V}, (e,f) \in \mathcal{E} \}$ and a linear functional $\tau: \mathcal{V}_{2,+} \to \mathbb{C}$ by $\tau(S_{e}S_{f}^{*})=\delta_{ef}$, $\tau(p_{u})=1 $ for all $ (e,f) \in \mathcal{E} $ and $ u \in \mathcal{V}$
(see subsection 3.1 of \cite{Mandal}).\\[0.1cm]

Since $\alpha(\mathcal{V}_{2,+}) \subseteq \mathcal{V}_{2,+} \otimes \mathcal{Q}$ by Lemma 3.6 of \cite{Mandal}, the equation $(\tau \otimes id)\alpha(.)=\tau(.).1 $ on $\mathcal{V}_{2,+}$ makes sense.\\

\begin{Def}(Definition 3.7 of \cite{Mandal})
	For a graph $\Gamma $, define a category $\mathfrak{C}_{\tau}^{Lin} $ whose objects are $((\mathcal{Q},\Delta),\alpha) $, quantum transformation group of $ C^{*}(\Gamma)$ such that $(\tau \otimes id)\alpha(.)=\tau(.).1 $ on $\mathcal{V}_{2,+}$. A morphism from $((\mathcal{Q}_{1},\Delta_{1}),\alpha_{1})$ to $((\mathcal{Q}_{2},\Delta_{2}),\alpha_{2})$ is given by a CQG morphism $\Phi:\mathcal{Q}_{1} \to \mathcal{Q}_{2} $ such that $ (id_{C^{*}(\Gamma)} \otimes \phi)\alpha_{1}=\alpha_{2} $.
\end{Def}

$F^{\Gamma}$ is a $ (|E(\Gamma)| \times |E(\Gamma)| ) $ matrix such that $(F^{\Gamma})_{ef}= \tau(S_{e}^{*}S_{f})$. It can be shown that $F^{\Gamma}$ is an invertible diagonal matrix. Therefore, $A_{U^{t}}(F^{\Gamma})$ is a CQG. We refer to Proposition 3.8 and Theorem 3.9 of \cite{Mandal} for the proof of the following theorem.          
\begin{thm}
	For a graph $\Gamma$,\vspace{0.1cm}
	\begin{enumerate}
		\item there is a surjective $C^{*}$-homomorphism from $A_{U^{t}}(F^{\Gamma})$ to any object in category $\mathfrak{C}_{\tau}^{Lin}$. \\
		\item the category $\mathfrak{C}_{\tau}^{Lin} $ admits a universal object.\\
	\end{enumerate}
	We denote the underlying CQG and the respective action of the universal object by $ Q_{\tau}^{Lin}(\Gamma) $ and $\alpha$ respectively, with respect to category $\mathfrak{C}_{\tau}^{Lin} $. Note from the proof of Theorem 3.9 of \cite{Mandal} that $ Q_{\tau}^{Lin}(\Gamma) $ is essentially a CMQG with the fundamental representation $q=(q_{ef})_{|E(\Gamma)|\times |E(\Gamma)|}$ such that $\alpha(S_e)=\sum\limits_{f \in E(\Gamma)} S_f \otimes q_{fe}$. \\
\end{thm}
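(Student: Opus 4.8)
The plan is to prove (1) by analysing the fundamental matrix of an arbitrary object and then to deduce (2) by exhibiting a single object of $\mathfrak{C}_{\tau}^{Lin}$ that dominates all the others.

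For (1), I would fix an object $((\mathcal{Q},\Delta),\alpha)$ and write its linear action as $\alpha(S_e)=\sum_{f\in E(\Gamma)}S_f\otimes q_{fe}$ with $q_{fe}\in\mathcal{Q}$. First one records the structural facts: $\mathcal{Q}$ is generated by the $q_{fe}$ (from faithfulness together with linearity, since every element of $\alpha(C^*(\Gamma))$ is a norm limit of polynomials in the $S_f,S_f^*$ tensored with polynomials in the $q_{fe},q_{fe}^*$); $\Delta(q_{fe})=\sum_g q_{fg}\otimes q_{ge}$ and $\epsilon(q_{fe})=\delta_{fe}$ (apply the action equation $(\alpha\otimes\mathrm{id})\alpha=(\mathrm{id}\otimes\Delta)\alpha$ and the counit identity $(\mathrm{id}\otimes\epsilon)\alpha=\mathrm{id}$ to $S_e$, using that the $S_f$ are linearly independent in $C^*(\Gamma)$); consequently $q=(q_{fe})$ is the matrix of a finite-dimensional corepresentation generating the compact quantum group $\mathcal{Q}$, and so $q$ --- and hence $q^t$ --- is invertible in $M_{|E(\Gamma)|}(\mathcal{Q})$. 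Next one extracts the defining relations of $A_{U^t}(F^\Gamma)$ by feeding the relations of $C^*(\Gamma)$ through the $*$-homomorphism $\alpha$ while simultaneously using $(\tau\otimes\mathrm{id})\alpha(\cdot)=\tau(\cdot)1$ on $\mathcal{V}_{2,+}$. Indeed, $\alpha(S_eS_f^*)=\sum_{g,h}S_gS_h^*\otimes q_{ge}q_{hf}^*$, and applying $\tau\otimes\mathrm{id}$ (with $\tau(S_gS_h^*)=\delta_{gh}$) gives $\sum_g q_{ge}q_{gf}^*=\delta_{ef}1$, i.e.\ $q^t(q^t)^*=I$; invertibility of $q^t$ then forces $(q^t)^*q^t=I$, so $q^t$ is unitary. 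Also $\alpha(S_e^*S_f)=\sum_g p_{r(g)}\otimes q_{ge}^*q_{gf}$: for $e\neq f$ the left-hand side is $0$, so grouping by range vertex and using the linear independence of the $p_v$ yields $\sum_{g:\,r(g)=v}q_{ge}^*q_{gf}=0$ for every $v$; for $e=f$, the relation $S_e^*S_e=p_{r(e)}$ combined with $(\tau\otimes\mathrm{id})\alpha(p_{r(e)})=\tau(p_{r(e)})1$ gives $\sum_g\tau(p_{r(g)})\,q_{ge}^*q_{ge}=\tau(p_{r(e)})1$. Since $F^\Gamma$ is diagonal with $(F^\Gamma)_{gg}=\tau(S_g^*S_g)=\tau(p_{r(g)})$, constant along each fibre $r^{-1}(v)$, these facts combine into $(F^\Gamma)^{-1}q^*F^\Gamma q=I$, and invertibility of $q$ upgrades this to $q(F^\Gamma)^{-1}q^*F^\Gamma=I$. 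Thus the $q_{fe}$ satisfy all the defining relations of $A_{U^t}(F^\Gamma)$, so its universal property supplies a surjective $*$-homomorphism $A_{U^t}(F^\Gamma)\twoheadrightarrow\mathcal{Q}$ sending generator to $q_{fe}$; it intertwines the two coproducts and is therefore a CQG morphism.

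For (2), I would construct the universal object as the universal $C^*$-algebra $\mathcal{Q}_\Gamma$ generated by elements $q_{fe}$ ($e,f\in E(\Gamma)$) subject to exactly the relations forced by requiring that $\beta(S_e):=\sum_f S_f\otimes q_{fe}$ extend to a $\tau$-invariant linear action of a compact quantum group on $C^*(\Gamma)$ --- concretely, the pull-backs of the Cuntz--Krieger relations of $C^*(\Gamma)$ together with the relations of $A_{U^t}(F^\Gamma)$; by (1) this is a quotient of $A_{U^t}(F^\Gamma)$, and the relations are admissible since unitarity of $q^t$ bounds the generators. One then checks that the coproduct $\Delta(q_{fe})=\sum_g q_{fg}\otimes q_{ge}$ descends to $\mathcal{Q}_\Gamma$ and makes it a compact quantum group; that by the universal property of $C^*(\Gamma)$ the assignment $\beta$ extends to a unital $*$-homomorphism $C^*(\Gamma)\to C^*(\Gamma)\otimes\mathcal{Q}_\Gamma$ (this is where the pull-back relations are used, to see that $\beta(p_v)$ is well defined, is a projection, that distinct $\beta(p_v)$ are orthogonal, and that the Cuntz--Krieger relations and $\sum_v\beta(p_v)=1$ hold); that the action equation follows from the coproduct formula, that Podle\'s density follows from invertibility of $q$ (whence $S_g\otimes1=\sum_e\beta(S_e)\bigl(1\otimes(q^{-1})_{eg}\bigr)$ lies in $\mathrm{span}\{\beta(C^*(\Gamma))(1\otimes\mathcal{Q}_\Gamma)\}$, and similarly for the other generators), that faithfulness holds because $(\theta\otimes\mathrm{id})\beta(S_e)=q_{ge}$ for a suitable $\theta$, and that $\tau$-invariance holds by construction. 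Hence $((\mathcal{Q}_\Gamma,\Delta),\beta)$ is an object of $\mathfrak{C}_\tau^{Lin}$; by (1) it surjects onto every object, and by construction this surjection intertwines $\beta$ with the target action, so $\mathcal{Q}_\Gamma$ is the universal object. We then set $Q_\tau^{Lin}(\Gamma):=\mathcal{Q}_\Gamma$, a quantum subgroup of $A_{U^t}(F^\Gamma)$ with fundamental representation $q=(q_{ef})$; when $F^\Gamma$ is a scalar multiple of the identity (as for $L_n$), the Cuntz--Krieger pull-backs are already consequences and $\mathcal{Q}_\Gamma=A_{U^t}(F^\Gamma)$.

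The step I expect to be the main obstacle is the two-directional bookkeeping that turns operator identities into identities among the $q_{fe}$. In (1) it requires a clean use of the linear independence of $\{p_u,S_eS_f^*\}$ (Lemma 3.2 of \cite{Mandal}) and an honest treatment of vertices that are simultaneously sources and ranges, and of sinks, where $p_v$ is reached only through $S_e^*S_e$ or through $\sum_v p_v=1$. In (2) the delicate point is verifying, from the chosen defining relations alone, that $\beta$ really is a $*$-homomorphism on all of $C^*(\Gamma)$ --- in particular the partial-isometry property of $\beta(S_e)$ and the orthogonality of the $\beta(p_v)$ --- and that the coproduct descends; this is exactly where the diagonal (and positive, integer-valued) structure of $F^\Gamma$ is used.
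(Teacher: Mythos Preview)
The paper does not actually prove this theorem: it is recalled from the literature, with the line ``We refer to Proposition 3.8 and Theorem 3.9 of \cite{Mandal} for the proof of the following theorem.'' So there is no in-paper argument to compare against. That said, your sketch is correct and follows precisely the route of the cited reference: for (1) you read off $q^t(q^t)^*=I$ from $(\tau\otimes\mathrm{id})\alpha(S_eS_f^*)=\delta_{ef}1$, and $q^*F^\Gamma q=F^\Gamma$ from $\alpha(S_e^*S_f)$ together with the linear independence of the $p_v$ and the $\tau$-invariance on $p_{r(e)}\in\mathcal{V}_{2,+}$, then upgrade both to two-sided identities via invertibility of $q$ and $q^t$; for (2) you build the universal object as the quotient of $A_{U^t}(F^\Gamma)$ by the additional relations forced by the Cuntz--Krieger relations under $\beta$, exactly as in Theorem 3.9 of \cite{Mandal}. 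Your closing caveats about sinks and about checking that $\beta$ is a well-defined $*$-homomorphism are also the right places to be careful, and are handled in the cited paper.
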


\subsubsection{\underline{Quantum symmetry of a graph $C^{*}$-algebra in the KMS state category introduced in \cite{Mandalkms}}}
\begin{Def}(Definition 3.1 of \cite{Mandalkms})
For a graph $\Gamma$ without a sink, the category $\mathfrak{C}_{KMS}^{Lin}$ whose objects are $((Q,\Delta),\alpha)$, where $(Q,\Delta)$ is a CQG and $\alpha $ is a linear action on $C^*(\Gamma)$ which preserves the state ${KMS}_{ln {\rho}(A(\Gamma))}$, i.e. \\
$$ \alpha(S_{e})=\sum_{f \in E(\Gamma)} S_{f} \otimes q_{fe} $$ and on $C^*(\Gamma)$
$$ ({KMS}_{ln {\rho}(A(\Gamma))} \otimes id_{C^*(\Gamma)} ) \circ \alpha (.) =  {KMS}_{ln {\rho}(A(\Gamma))}(.)1 .$$
A morphism from $((\mathcal{Q}_{1},\Delta_{1}),\alpha_{1})$ to $((\mathcal{Q}_{2},\Delta_{2}),\alpha_{2})$ is given by a CQG morphism $\Phi:\mathcal{Q}_{1} \to \mathcal{Q}_{2} $ such that $ (id_{C^{*}(\Gamma)} \otimes \Phi)\alpha_{1}=\alpha_{2} $.
\end{Def}

\begin{thm}(Proposition 3.2 of \cite{Mandalkms} ) \label{kmsthm1}
The category $\mathfrak{C}_{KMS}^{Lin}$ has a universal object.
\end{thm}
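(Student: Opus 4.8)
The plan is to construct the universal object by hand, as the concrete $C^*$-algebra generated by the matrix coefficients of a linear action, following the template used for $Q_{\tau}^{Lin}(\Gamma)$ in Theorem 3.9 of \cite{Mandal} and for the orthogonal filtration quantum symmetry group in Theorem 2.7 of \cite{Ortho}, but carrying out the state-preservation bookkeeping for the KMS state $\psi := {KMS}_{ln \rho(A(\Gamma))}$. Throughout I write $\mathcal{F}_0 = \mathbb{C}1$ and $\mathcal{F}_k = \mathrm{span}\{S_\gamma S_\mu^* : |\gamma| = |\mu| = k\}$, so that $\mathcal{F}_k \subseteq \mathcal{F}_{k+1}$ and $\overline{\bigcup_k \mathcal{F}_k} = C^*(\Gamma)$ by (vi) of Proposition \ref{graphprop}, and $\psi$ is faithful on each $\mathcal{F}_k$ by Proposition \ref{kmsprop}. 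The key preliminary point is that $\mathfrak{C}_{KMS}^{Lin}$ can be described entirely in terms of finitely many polynomial $*$-relations among the coefficients $q_{ef}$.

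First I would put an arbitrary object $((\mathcal{Q},\Delta),\alpha)$ of $\mathfrak{C}_{KMS}^{Lin}$ into normal form. Since $\alpha(S_e) = \sum_f S_f \otimes q_{fe}$, applying $\alpha$ to the Cuntz--Krieger relations $S_e^* S_e = p_{r(e)}$ and $p_v = \sum_{s(f)=v} S_f S_f^*$ and using Proposition \ref{graphprop} forces, exactly as in Lemma 3.2, Lemma 3.6 and Proposition 3.8 of \cite{Mandal}, that $q=(q_{ef})$ and $q^t$ are unitary, that $\alpha(\mathcal{F}_k) \subseteq \mathcal{F}_k \otimes \mathcal{Q}$ for every $k$, and that the $q_{ef}$ obey the further relations expressing that $\alpha$ respects products and adjoints on each (finite dimensional) $\mathcal{F}_k$. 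Because each $\mathcal{F}_k$ is $\alpha$-invariant and finite dimensional, the KMS-invariance condition $(\psi \otimes id)\alpha(\cdot) = \psi(\cdot)1$ becomes, upon evaluating on the spanning elements $S_\gamma S_\mu^*$ and inserting the explicit values of $\psi$ from Proposition \ref{kmsprop}, a closed set $\mathcal{R}_\psi$ of polynomial $*$-relations in the $q_{ef}$.

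Next I would let $\mathcal{Q}_{KMS}$ be the universal unital $C^*$-algebra on generators $\{q_{ef} : e,f \in E(\Gamma)\}$ subject to all the relations of the previous paragraph (unitarity of $q$ and $q^t$, the Cuntz--Krieger-compatibility relations, and $\mathcal{R}_\psi$); unitarity of $q$ gives $\|q_{ef}\| \le 1$, so this universal object exists. One then checks that $S_e \mapsto \sum_f S_f \otimes q_{fe}$ extends to a unital $*$-homomorphism $\alpha_{KMS}: C^*(\Gamma) \to C^*(\Gamma) \otimes \mathcal{Q}_{KMS}$ (its image satisfies the Cuntz--Krieger relations by construction), and that $\Delta(q_{ef}) := \sum_g q_{eg} \otimes q_{gf}$ is a well-defined unital $*$-homomorphism $\mathcal{Q}_{KMS} \to \mathcal{Q}_{KMS} \otimes \mathcal{Q}_{KMS}$, i.e.\ that the elements $\sum_g q_{eg} \otimes q_{gf}$ again satisfy every defining relation. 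Coassociativity is immediate from the formula, the Podle\'s density conditions follow from unitarity of $q$ and $q^t$ in the usual way, so $(\mathcal{Q}_{KMS}, \Delta)$ is a CMQG with fundamental representation $q$; replacing $\mathcal{Q}_{KMS}$ by the quotient on which $\alpha_{KMS}$ is faithful (as in \cite{Mandal} and \cite{Ortho}) gives an object of $\mathfrak{C}_{KMS}^{Lin}$. Universality is then automatic: for any object $((\mathcal{Q},\Delta),\alpha)$ with coefficient matrix $q'$, the normal-form step shows the $q'_{ef}$ satisfy the defining relations, so the universal property yields a surjective CQG morphism $\Phi$ with $\Phi(q_{ef}) = q'_{ef}$ and $(id \otimes \Phi)\alpha_{KMS} = \alpha$.

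The main obstacle is the content of the second paragraph together with the relation-stability check in the third: one must verify that KMS-invariance is genuinely equivalent to a closed set of relations purely in the $q_{ef}$ — which rests on $\alpha$ preserving each finite dimensional $\mathcal{F}_k$ and on the explicit formula for $\psi$ from Proposition \ref{kmsprop}, hence implicitly on $\Gamma$ having no sink so that $\rho(A(\Gamma))$ is a positive eigenvalue with strictly positive eigenvector — and that this set of relations is stable under the candidate comultiplication, so that $\Delta$ descends to $\mathcal{Q}_{KMS}$. Everything else (existence of the universal $C^*$-algebra, the CQG axioms, the passage to a faithful action, and the final universality argument) is routine and runs parallel to \cite{Mandal} and \cite{Ortho}.
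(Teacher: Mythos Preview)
Your proposal is correct and follows essentially the same route as the paper (which does not prove this statement independently but cites \cite{Mandalkms}; the analogous in-paper result, Theorem~\ref{KMS} for $\mathfrak{C}_{\oplus KMS}^{Lin}$, is established by exactly the scheme you outline: bound every object by a free unitary quantum group via Lemma~\ref{qsubgroupAUF}, then run the universal-$C^*$-algebra argument of Theorem~3.9 of \cite{Mandal}).

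One small correction worth flagging: you assert that the Cuntz--Krieger relations together with state-preservation force both $q$ and $q^t$ to be unitary. For a general graph this is not what one obtains---the computation in Lemma~\ref{qsubgroupAUF} (and Proposition~3.8 of \cite{Mandal}, which you cite) yields unitarity of one of $U$, $U^t$ together with a \emph{twisted} unitarity relation involving $F^\Gamma$ for the other, i.e.\ the relations of $A_U(F^\Gamma)$ or $A_{U^t}(F^\Gamma)$ rather than of $U_n^+$. This does not damage your argument, since unitarity of either matrix alone already gives the bound $\|q_{ef}\|\le 1$ needed for the universal $C^*$-algebra to exist; but the phrase ``$q$ and $q^t$ are unitary'' should be replaced by the correct twisted relations when $F^\Gamma$ is not scalar.
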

\noindent We denote the underlying CMQG of the universal object by $ Q_{KMS}^{Lin}(\Gamma) $ in category $\mathfrak{C}_{KMS}^{Lin} $.\\

\noindent The next result provides a sufficient condition for the isomorphism between the categories  $\mathfrak{C}_{\tau}^{Lin} $ and $\mathfrak{C}_{KMS}^{Lin} $.
\begin{thm}(Theorem 3.6 of \cite{Mandalkms})\label{kmsthm2}
For a graph $\Gamma$ without a sink, if all the row sums of $A(\Gamma)$ are all $\rho(A(\Gamma))$, then the ${KMS}_{ln {\rho}(A(\Gamma))}$ state exists on $C^*(\Gamma)$ such that 
 $${KMS}_{ln {\rho}(A(\Gamma))} (S_{\gamma}S_{\mu}^{*})= \begin{cases}                                    
                                \frac{{\rho}(A(\Gamma))^{-|\gamma|}}{|V(\Gamma)|} & if ~ \gamma= \mu \\
                                0 & ~ otherwise  \end{cases} $$
and in this case, the categories $\mathfrak{C}_{\tau}^{Lin} $ and $\mathfrak{C}_{KMS}^{Lin} $ coincide.
\end{thm}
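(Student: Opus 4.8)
The plan is to prove the two assertions separately: first the existence of the critical $KMS$ state with the stated values, then the identification $\mathfrak{C}_{\tau}^{Lin}=\mathfrak{C}_{KMS}^{Lin}$. Write $A=A(\Gamma)$, $\rho=\rho(A(\Gamma))$, $N=|V(\Gamma)|$. For the first assertion I would just unwind the hypothesis: ``every row sum of $A$ equals $\rho$'' says exactly that the all-ones vector is an eigenvector of $A$ for $\rho$, so after normalising we get the strictly positive probability eigenvector $\mathcal P=\tfrac1N(1,\dots,1)$. Since $\Gamma$ has no sink, Proposition~\ref{kmsprop}(2) applies verbatim and produces a state $KMS_{\ln\rho}$, faithful on each $\mathcal F_k$, with $KMS_{\ln\rho}(S_\gamma S_\mu^*)=\rho^{-|\gamma|}\mathcal P_{r(\gamma)}\,\delta_{\gamma\mu}=\tfrac{\rho^{-|\gamma|}}{N}\,\delta_{\gamma\mu}$, which is the displayed formula.

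For the identification, note that the two categories have the same morphisms and the same class of underlying objects (linear actions of a CQG on $C^*(\Gamma)$), so it is enough to compare the two invariance conditions. Since $\Gamma$ has no sink, $\mathcal V=\emptyset$ and $\mathcal V_{2,+}=\operatorname{span}\{S_eS_f^*:(e,f)\in\mathcal E\}=\mathcal F_1$, and on this space $\tau(S_eS_f^*)=\delta_{ef}$ while $KMS_{\ln\rho}(S_eS_f^*)=\tfrac{\rho^{-1}}{N}\delta_{ef}$; hence $KMS_{\ln\rho}$ and $\tau$ restricted to $\mathcal V_{2,+}$ differ only by the nonzero scalar $\tfrac{\rho^{-1}}{N}$. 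Because $\alpha(\mathcal V_{2,+})\subseteq\mathcal V_{2,+}\otimes\mathcal Q$ for any linear action (Lemma~3.6 of \cite{Mandal}), an action that preserves $KMS_{\ln\rho}$ on all of $C^*(\Gamma)$ preserves $\tau$ on $\mathcal V_{2,+}$ after dividing by $\tfrac{\rho^{-1}}{N}$; this yields $\mathfrak{C}_{KMS}^{Lin}\subseteq\mathfrak{C}_{\tau}^{Lin}$ immediately.

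The reverse inclusion is the real content. Let $\alpha(S_e)=\sum_f S_f\otimes q_{fe}$ preserve $\tau$ on $\mathcal V_{2,+}$. By Proposition~3.8 / Theorem~3.9 of \cite{Mandal}, $q=(q_{ef})$ is a quotient fundamental representation of $A_{U^t}(F^\Gamma)$, and here $(F^\Gamma)_{ef}=\tau(S_e^*S_f)=\delta_{ef}\,\tau(p_{r(e)})=\delta_{ef}\,\#\{g:s(g)=r(e)\}=\delta_{ef}\,\rho$, so $F^\Gamma=\rho I$ and the relations degenerate to ``$q$ and $q^t$ unitary'', in particular $\sum_{g\in E}q_{ge}q_{gf}^*=\delta_{ef}1$. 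I would also record the vertex relations $\alpha(p_v)=\sum_w p_w\otimes y_{wv}$ together with $y_{s(f),s(e)}q_{fe}=q_{fe}=q_{fe}y_{r(f),r(e)}$ and, more sharply, $q_{fe}\,y_{w,v}=0$ whenever $(w,v)\neq(r(f),r(e))$ — all obtained by applying the $*$-homomorphism $\alpha$ to $p_uS_e=\delta_{u,s(e)}S_e$ and $S_ep_u=\delta_{u,r(e)}S_e$ and comparing coefficients of the linearly independent $S_f$'s. By Proposition~\ref{graphprop}(vi) it suffices to check $(KMS_{\ln\rho}\otimes\operatorname{id})\alpha(S_\gamma S_\mu^*)=KMS_{\ln\rho}(S_\gamma S_\mu^*)1$ for legal paths $\gamma,\mu$ with $r(\gamma)=r(\mu)$. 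Expanding $\alpha(S_\gamma S_\mu^*)=\sum_{\gamma',\mu'}S_{\gamma'}S_{\mu'}^*\otimes q_{\gamma'\gamma}q_{\mu'\mu}^*$ (over legal $\gamma',\mu'$ with $|\gamma'|=|\gamma|$, $|\mu'|=|\mu|$ and $q_{\gamma'\gamma}:=q_{f_1e_1}\cdots q_{f_ke_k}$) and applying $KMS_{\ln\rho}\otimes\operatorname{id}$ kills every term except the diagonal ones $\gamma'=\mu'$; when $|\gamma|\neq|\mu|$ both sides vanish, and when $|\gamma|=|\mu|=:k$ the left side becomes $\tfrac{\rho^{-k}}{N}\sum_{\gamma'}q_{\gamma'\gamma}q_{\gamma'\mu}^*$, so everything reduces to the identity
\[ \sum_{\gamma'\ \mathrm{legal},\,|\gamma'|=k} q_{\gamma'\gamma}\,q_{\gamma'\mu}^* \;=\; \delta_{\gamma\mu}\,1. \]

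This I would prove by induction on $k$: the case $k=1$ is the unitarity of $q^t$; for $k>1$, write $\gamma=e_1\gamma_0$, $\mu=g_1\mu_0$, $\gamma'=f_1\gamma_0'$ with $r(f_1)=s(\gamma_0')$, and use legality of $\gamma$ ($s(\gamma_0)=r(e_1)$) together with $q_{\gamma_0'\gamma_0}=y_{s(\gamma_0'),r(e_1)}q_{\gamma_0'\gamma_0}$ to rewrite $q_{f_1e_1}q_{\gamma_0'\gamma_0}=q_{f_1e_1}\,y_{s(\gamma_0'),r(e_1)}\,q_{\gamma_0'\gamma_0}$; the sharp vertex relation then shows this vanishes unless $s(\gamma_0')=r(f_1)$, so the composability constraint may be dropped from the inner sum, which the induction hypothesis evaluates to $\delta_{\gamma_0\mu_0}1$, and the remaining outer sum over $f_1$ collapses by unitarity of $q^t$ to $\delta_{\gamma_0\mu_0}\delta_{e_1g_1}1=\delta_{\gamma\mu}1$. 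The step I expect to be the main obstacle is precisely this induction — specifically, the bookkeeping that lets the global relation $\sum_g q_{ge}q_{gf}^*=\delta_{ef}1$ (a sum over \emph{all} edges) mesh with the sums over \emph{composable} edge tuples inside $q_{\gamma'\gamma}$: one must first derive the one-sided relations $q_{fe}y_{w,v}=0$ for $(w,v)\neq(r(f),r(e))$ and then apply them carefully enough to discard the non-composable summands. A more structural alternative, which I would fall back on if the direct computation becomes unwieldy, is to observe that $\alpha$ commutes with the gauge action, hence $(KMS_{\ln\rho}\otimes\operatorname{id})\alpha$ annihilates all nontrivially graded elements and restricts on the AF-core $\overline{\bigcup_k\mathcal F_k}$ to a map determined, through the path multiplication, by its value on $\mathcal F_1=\mathcal V_{2,+}$, so that $\tau$-invariance there already forces $KMS_{\ln\rho}$-invariance everywhere; this still rests on the same combinatorial core.
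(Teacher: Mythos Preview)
The paper does not prove this theorem; it is quoted verbatim from \cite{Mandalkms} (Theorem~3.6 there), so there is no in-paper argument to compare against. Your outline is essentially a correct reconstruction of that proof: the first assertion is exactly Proposition~\ref{kmsprop}(2) with $\mathcal P=\tfrac1N(1,\dots,1)$, and the inclusion $\mathfrak C_{KMS}^{Lin}\subseteq\mathfrak C_\tau^{Lin}$ is indeed the trivial direction because $KMS_{\ln\rho}|_{\mathcal V_{2,+}}=\tfrac{\rho^{-1}}{N}\,\tau$. For the nontrivial inclusion your induction on the path length, reducing everything to the unitarity of $q^t$, is the right mechanism.

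One point deserves tightening. The ``sharp vertex relation'' $q_{fe}\,y_{w,v}=0$ for all $(w,v)\neq(r(f),r(e))$ does \emph{not} follow merely from applying $\alpha$ to $S_ep_u$ and $p_uS_e$ and comparing coefficients: that only yields $q_{fe}\,y_{r(f),v}=\delta_{v,r(e)}q_{fe}$ and $y_{s(f),v}\,q_{fe}=\delta_{v,s(e)}q_{fe}$, i.e.\ information for $w=r(f)$ (resp.\ left-multiplication for $w=s(f)$). What your induction actually needs is only the case $v=r(e)$, namely $q_{fe}\,y_{w,r(e)}=0$ for $w\neq r(f)$, and this \emph{does} hold, but for a reason you have not quite written: from $\alpha(p_{r(e)})=\alpha(S_e^*S_e)$ one computes $y_{w,r(e)}=\sum_{g:r(g)=w}q_{ge}^*q_{ge}$, whence $\sum_w y_{w,r(e)}=\sum_g q_{ge}^*q_{ge}=1$ by the unitarity of $q$ that you already extracted from $F^\Gamma=\rho I$. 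Thus $(y_{wv})$ is a magic unitary, the column projections are mutually orthogonal, and $q_{fe}\,y_{w,r(e)}=q_{fe}\,y_{r(f),r(e)}\,y_{w,r(e)}=0$ for $w\neq r(f)$. With this correction your induction step goes through exactly as you describe, and the displayed identity $\sum_{\gamma'}q_{\gamma'\gamma}q_{\gamma'\mu}^*=\delta_{\gamma\mu}1$ follows.
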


\begin{cor}\label{kmscor1}
For the  Cuntz algebras $\{\mathcal{O}_{n_i}\}_{i=1}^{m}$ where all $n_i$'s are equal, the categories $\mathfrak{C}_{\tau}^{Lin} $ and $\mathfrak{C}_{KMS}^{Lin} $ coincide for $\oplus_{i=1}^{m} \mathcal{O}_{n_i} ~~(\cong C^*(\sqcup_{i=1}^{m} ~L_{n_i}))$.\\
\end{cor}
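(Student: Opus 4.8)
The plan is to verify the hypotheses of Theorem \ref{kmsthm2} for the graph $\Gamma := \sqcup_{i=1}^{m} L_{n_i}$ when all the $n_i$'s are equal, say $n_i = n$ for every $i$. First I would record the structure of the adjacency matrix: $\sqcup_{i=1}^{m} L_n$ has exactly $m$ vertices $v_1, \dots, v_m$, and at each vertex $v_j$ there are precisely $n$ loops (and no other edges, since the components are disjoint). Hence $A(\Gamma)$ is the diagonal matrix $n\, I_{m \times m}$. This graph has no sink (every vertex emits $n \geq 1$ edges) and no isolated vertex, so Theorem \ref{kmsthm2} applies in principle.

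Next I would compute the two quantities appearing in the hypothesis. The spectral radius is immediate: $\rho(A(\Gamma)) = \rho(n I_{m\times m}) = n$. And the row sums of $A(\Gamma)$ are all equal to $n$ as well, since each row of $n I_{m \times m}$ sums to $n$. Therefore the condition ``all row sums of $A(\Gamma)$ equal $\rho(A(\Gamma))$'' of Theorem \ref{kmsthm2} is satisfied. (Alternatively, one can invoke Corollary \ref{directkmseq}, which already identifies the relevant ${KMS}_{\ln n}$ state on $C^*(\sqcup_{i=1}^m L_{n})$ and shows it is the direct sum of the individual KMS states and is faithful on each $\mathcal{F}_k$; this directly supplies the faithful KMS state that Theorem \ref{kmsthm2} asks for.)

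With both hypotheses checked, Theorem \ref{kmsthm2} yields directly that the ${KMS}_{\ln n}$ state exists on $C^*(\Gamma)$ with the stated formula — here $|V(\Gamma)| = m$, so ${KMS}_{\ln n}(S_\gamma S_\mu^*) = \frac{n^{-|\gamma|}}{m} \delta_{\gamma\mu}$, matching the formula in Corollary \ref{directkmseq} — and, crucially, that the categories $\mathfrak{C}_{\tau}^{Lin}$ and $\mathfrak{C}_{KMS}^{Lin}$ coincide for $C^*(\Gamma) \cong \oplus_{i=1}^{m}\mathcal{O}_n$. That is exactly the assertion of the corollary.

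I do not expect any genuine obstacle here: the corollary is a straightforward specialization of Theorem \ref{kmsthm2}, and essentially all the work is in observing $A(\sqcup_{i=1}^m L_n) = n I_{m\times m}$. The only minor point requiring a word of care is confirming that the disjoint union graph $\sqcup_{i=1}^m L_n$ indeed has the adjacency matrix claimed — i.e., that passing to the disjoint union neither introduces edges between components nor changes the number of loops at each vertex — which is immediate from the definition of disjoint union of graphs and was already used implicitly in the identification $C^*(\sqcup_{i=1}^m L_{n_i}) \cong \oplus_{i=1}^m \mathcal{O}_{n_i}$.
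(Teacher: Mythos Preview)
Your proposal is correct and is exactly the approach the paper intends: the corollary is stated immediately after Theorem \ref{kmsthm2} with no separate proof, so it is meant to follow by verifying that $A(\sqcup_{i=1}^{m} L_n) = n I_{m\times m}$ has all row sums equal to its spectral radius $n$. Your computation of the adjacency matrix and check of the hypotheses is precisely what is needed.
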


\subsubsection{\underline{Quantum symmetry of direct sum of Cuntz algebras in a natural state preserving category }}
We will adopt the same idea to define a new state preserving category only for the direct sum of Cuntz algebras.
Since each $C^*(L_{n_i})$ has a natural $KMS_{ln(n_i)}$ state, we can define a state $ \oplus_{i=1}^{m} KMS_{ln(n_i)}$ on $C^*(\sqcup_{i=1}^{m} ~~ L_{n_i})$.

\begin{Def}
For $\oplus_{i=1}^{m} \mathcal{O}_{n_i} (\cong C^*(\sqcup_{i=1}^{m} ~L_{n_i}))$, the category $\mathfrak{C}_{\oplus KMS}^{Lin}$ whose objects are $((Q,\Delta),\alpha)$, where $(Q,\Delta)$ is a CQG and $\alpha $ is a linear action on $C^*(\sqcup_{i=1}^{m} ~L_{n_i})$ which preserves $ \oplus_{i=1}^{m} KMS_{ln(n_i)}$, i.e. \\
$$ \alpha(S_{e})=\sum_{f \in E(\sqcup_{i=1}^{m} L_{n_i})} S_{f} \otimes q_{fe} $$ and on $C^*(\sqcup_{i=1}^{m} ~~ L_{n_i})$
$$ ( \oplus_{i=1}^{m} KMS_{ln(n_i)} \otimes id_{C^*(\Gamma)} ) \circ \alpha (.) =  \oplus_{i=1}^{m} KMS_{ln(n_i)}(.)1 .$$
A morphism from $((\mathcal{Q}_{1},\Delta_{1}),\alpha_{1})$ to $((\mathcal{Q}_{2},\Delta_{2}),\alpha_{2})$ is given by a CQG morphism $\Phi:\mathcal{Q}_{1} \to \mathcal{Q}_{2} $ such that $ (id_{C^{*}(\sqcup_{i=1}^{m} ~L_{n_i})} \otimes \Phi)\alpha_{1}=\alpha_{2} $.		
\end{Def}

\begin{lem} \label{qsubgroupAUF}
	There exists a surjective $C^*$-homomorphism from $A_{U}(F^{\Gamma})$ to any object of the category $\mathfrak{C}_{\oplus KMS}^{Lin}$ corresponding to the graph $\Gamma=\sqcup_{i=1}^{m} ~~ L_{n_i}$. 
\end{lem}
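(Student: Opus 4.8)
The plan is to mimic the proof of part (1) of the theorem characterizing $Q_{\tau}^{Lin}(\Gamma)$ (Proposition 3.8 / Theorem 3.9 of \cite{Mandal}), adapting it to the state-preserving category $\mathfrak{C}_{\oplus KMS}^{Lin}$ rather than the functional-preserving category $\mathfrak{C}_{\tau}^{Lin}$. So let $((Q,\Delta),\alpha)$ be an arbitrary object of $\mathfrak{C}_{\oplus KMS}^{Lin}$ for $\Gamma = \sqcup_{i=1}^{m} L_{n_i}$, with $\alpha(S_e) = \sum_{f\in E(\Gamma)} S_f\otimes q_{fe}$. First I would extract, purely from the defining relations of a graph $C^*$-algebra being respected by $\alpha$ together with the state-invariance condition, that the matrix $q=(q_{fe})$ and its transpose satisfy exactly the relations presenting $A_{U}(F^{\Gamma})$; here $F^{\Gamma}$ is the invertible diagonal matrix with $(F^{\Gamma})_{ef}=\tau(S_e^*S_f)$, which for $\Gamma=\sqcup L_{n_i}$ is diagonal with the entry attached to an edge of $L_{n_i}$ equal to the reciprocal of the relevant KMS weight $\tfrac{1}{m}n_i^{-1}$ (up to the normalization convention used in \cite{Mandal}). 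The universal property of $A_{U}(F^{\Gamma})$ then hands us a surjective unital $C^*$-homomorphism $\pi: A_{U}(F^{\Gamma})\to Q$ sending the generators to the $q_{fe}$, and one checks it intertwines the coproducts because both are defined by the same matrix-multiplication formula on generators; hence it is a CQG morphism, which is what surjectivity as a quantum subgroup means.

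The concrete steps, in order, are: (i) apply $\alpha$ to $S_e^*S_e = p_{r(e)}$ and to $p_v = \sum_{s(f)=v} S_f S_f^*$ to get relations among the $q_{fe}$; in particular, using that in $L_{n_i}$ every edge is a loop at the unique vertex $v_i$, one gets that the $q_{fe}$ with $e,f$ in the same component $L_{n_i}$ behave like the entries of a unitary-type matrix, and that $q_{fe}=0$ whenever $e,f$ lie in different components (this uses Proposition \ref{graphprop}, $S_\gamma S_\mu^* = 0$ for paths in different components, exactly as in the proof of Corollary \ref{directkmseq}). (ii) Translate the state-preservation identity $(\oplus_i KMS_{ln(n_i)}\otimes id)\alpha(S_\gamma S_\mu^*) = (\oplus_i KMS_{ln(n_i)})(S_\gamma S_\mu^*)\,1$, evaluated on length-one elements $S_e S_f^*$ and on the $p_{v_i}$, to obtain the "$F$-twisted unitarity" relations $U (F^{\Gamma})^{-1} U^* F^{\Gamma} = (F^{\Gamma})^{-1} U^* F^{\Gamma} U = I$ — this is the step where the specific diagonal weights enter, and it is the analogue of how the functional $\tau$ forces these relations in \cite{Mandal}. (iii) Combine (i) and (ii): unitarity of $U^t$ comes from the Cuntz relations $\sum_e S_e S_e^* = 1$ within each component plus $S_e^* S_e = 1$, and the $F$-twisted relation comes from state preservation; together these are precisely the presentation of $A_{U}(F^{\Gamma})$ restricted to each block, direct-summed over $i$. (iv) Invoke universality of $A_{U}(F^{\Gamma})$ to produce $\pi$, and verify the coproduct intertwining and surjectivity (surjectivity is automatic since the $q_{fe}$ generate $Q$ by faithfulness of the action).

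The main obstacle I anticipate is step (ii): making sure the state-preservation condition for the \emph{direct-sum} state $\oplus_i KMS_{ln(n_i)}$ genuinely yields the $F^{\Gamma}$-twisted relations and not something weaker, given that this state is \emph{not} faithful on the whole algebra when the $n_i$ are not all equal and the weights $n_i^{-1}/m$ differ across blocks. One has to evaluate $\alpha$ on enough elements — the diagonal partial isometries $S_e S_e^*$ and the projections $p_{v_i}$ — and carefully use the linear independence of $\{p_u, S_e S_f^* : u\in\mathcal V, (e,f)\in\mathcal E\}$ (Lemma 3.2 of \cite{Mandal}) to read off scalar identities among the $q_{fe}$ coefficients; the block-diagonality from step (i) is what confines everything to each $L_{n_i}$ so that the single-graph computation of \cite{Mandal} applies verbatim inside each block, and then one assembles the blocks. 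Once block-diagonality is in hand, the rest is essentially bookkeeping, so I would present step (i) and step (ii) in detail and treat step (iv) as a routine appeal to the universal property.
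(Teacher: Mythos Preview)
Your step (i) contains a genuine error that undermines the whole strategy. You claim that from the graph $C^*$-algebra relations alone one can deduce $q_{fe}=0$ whenever $e$ and $f$ lie in different components $L_{n_i}$, $L_{n_j}$. This is false: the lemma concerns \emph{every} object of $\mathfrak{C}_{\oplus KMS}^{Lin}$, and that category is defined for arbitrary tuples $(n_1,\dots,n_m)$, including the case where all $n_i$ coincide. In that case Theorem~\ref{thm2} shows the universal object is $U_N^+\wr_* S_K^+$, whose fundamental matrix has \emph{nonzero} off-diagonal blocks $U_{ab}=(q_{ij}^{ab})$ for $a\neq b$; so block-diagonality simply does not hold for general objects. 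Even when the $n_i$ are distinct, establishing $q_{fe}=0$ across components is precisely the content of the induction in Theorem~\ref{diskmsthm}, not a consequence of Proposition~\ref{graphprop}: the vanishing $S_\gamma S_\mu^*=0$ for paths in different components constrains the \emph{range} of $\alpha$, not the individual coefficients $q_{fe}$. Since the rest of your plan (``confine everything to each $L_{n_i}$ so that the single-graph computation applies verbatim inside each block'') is built on this block-diagonality, the argument does not go through.

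The paper's proof is shorter and sidesteps the issue entirely by never decomposing into blocks. One applies state preservation directly to $S_eS_f^*$ for $e,f\in E_p$, summing over \emph{all} $g\in E$, to obtain the global relation $U^t(F^\Gamma)^{-1}(U^t)^*=(F^\Gamma)^{-1}$; then to $S_e^*S_e$ (using that $\phi(S_e^*S_e)=1/m$ regardless of which component $e$ lies in) to obtain $\sum_{k\in E} q_{ke}^*q_{ke}=1$, i.e.\ $U^*U=I$. The invertibility of $U$ and $U^t$ that comes for free from the CQG structure upgrades these to the full presentation of $A_U(F^\Gamma)$, and the universal property finishes. Two smaller points: the diagonal entry of $F^\Gamma$ at an edge of $L_{n_i}$ is $n_i$ (since $\tau(S_e^*S_e)=\tau(p_{v_i})=\sum_{k}\tau(S_{e_k^i}S_{e_k^i}^*)=n_i$), not $(\tfrac{1}{m}n_i^{-1})^{-1}$; and the lemma concerns $A_U(F^\Gamma)$ (with $U$ unitary), not $A_{U^t}(F^\Gamma)$ as in the $\tau$-category, so the twisted relation you wrote, $UF^{-1}U^*F=\cdots=I$, is not the one needed here.
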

\begin{proof}
	For our convenience, we denote the state $KMS_{ln(n_i)}$ simply by $\phi_{i}$ and   $\oplus_{i=1}^{m} KMS_{ln(n_i)}= \oplus_{i=1}^{m} \phi_{i}$ by $\phi$. Also, we define $E_i := E(L_{n_i})$ and $E:=\sqcup_{i=1}^{m} E_i = E(\sqcup_{i=1}^{m} L_{n_i})$. Hence, $|E_i|=n_i$ and $|E|=\sum\limits_{i=1}^{m} n_i =: n$.\\
	Let $((Q,\Delta),\alpha)$ be an object of $\mathfrak{C}_{\oplus KMS}^{Lin}$ with $\alpha(S_{e})=\sum\limits_{f \in E} S_{f} \otimes q_{fe}$, where $q_{fe} \in Q$ for $e,f \in E$. Since $\alpha$ preserves $\phi$, for each $p \in [m]$ and for all $e,f \in E_p$ we have
	\begin{align*}
		 & (\phi \otimes id_{C^*(\Gamma)} ) \circ \alpha (S_e S_f^*) = \phi(S_e S_f^*)1\\
		 \Rightarrow & \sum\limits_{g,h \in E} \phi(S_g S_{h}^{*})~ q_{ge} q_{hf}^{*} = \frac{1}{m} \phi_{i}(S_e S_f^*)=\frac{1}{m.n_p} \delta_{ef} \\
		 \Rightarrow & \sum\limits_{i \in [m]} \sum\limits_{g,h \in E_i} \frac{1}{m} \phi_i(S_g S_{h}^{*})~ q_{ge} q_{hf}^{*} = \frac{1}{m} \phi_{i}(S_e S_f^*)=\frac{1}{m.n_p} \delta_{ef} \\
		 \Rightarrow & \sum\limits_{i \in [m]} \sum\limits_{g,h \in E_i} \frac{1}{m.n_i} \delta_{gh}~ q_{ge} q_{hf}^{*}= \frac{1}{m.n_p} \delta_{ef}\\
		 \Rightarrow & \sum\limits_{i \in [m]} \sum\limits_{g,h \in E_i} \frac{1}{n_i} \delta_{gh}~ q_{ge} q_{hf}^{*}= \frac{1}{n_p} \delta_{ef}
		 \end{align*} 
Therefore, the last equation implies $U^t (F^{\Gamma})^{-1} {U^t}^*= {F^{\Gamma}}^{-1}$, where $U:=(q_{ef})_{n \times n}$.	Since the CQG action is in our consideration, the invertibility of $U^t$ guarantees ${F^{\Gamma}}^{-1} {U^t}^* F^{\Gamma} U^t=I $. Hence, ${U^t}^* F^{\Gamma} U^t=F^{\Gamma}$.\\
Now, observe that since $S_e^* S_e= \sum\limits_{f \in E_p} S_f S_f^*$ for all $e \in E_p$ and $p \in [m]$,
 $$ \phi(S_e^* S_e)= \sum\limits_{f \in E_p} \frac{1}{m}\phi_{p}(S_f S_f^*)= \sum\limits_{f \in E_p} \frac{1}{m}\frac{1}{n_p} = \frac{1}{m}. $$  
  Again using the fact that $\alpha$ preserves $\phi$, for all $e \in E_p$ we get
  \begin{align*}
  	& (\phi \otimes id_{C^*(\Gamma)} ) \circ \alpha (S_e^* S_e) = \phi(S_e^* S_e)1\\
  	\Rightarrow & \sum\limits_{k \in E}\phi(S_k^* S_k)~ q_{ke}^* q_{ke} = \frac{1}{m} \\
  	\Rightarrow & \sum\limits_{i \in [m]} \sum\limits_{k \in E_i} \frac{1}{m}~ q_{ke}^* q_{ke} = \frac{1}{m} \\
  	\Rightarrow & \sum\limits_{i \in [m]} \sum\limits_{k \in E_i}  q_{ke}^* q_{ke} = 1.
  \end{align*}
  Therefore, $U^*U =I_n$. Being a CQG action, invertibility of $U$ again concludes $UU^* =I_n$. Then, the universal property of $A_{U}(F^{\Gamma})$ ensures the statement of the lemma.
\end{proof}
\noindent Now, using similar arguments as in Theorem 3.9 of \cite{Mandal}, one can easily show the following theorem.
\begin{thm} \label{KMS}
	The category $\mathfrak{C}_{\oplus KMS}^{Lin}$ always admits a universal object.
\end{thm}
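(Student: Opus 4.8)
The plan is to follow the template for the existence of a quantum automorphism group used in Theorem~3.9 of \cite{Mandal}, with Lemma~\ref{qsubgroupAUF} now playing the role that Proposition~3.8 of \cite{Mandal} plays there. Throughout write $\Gamma=\sqcup_{i=1}^{m}L_{n_i}$, $E_i:=E(L_{n_i})$, $E:=\sqcup_{i}E_i$, $n:=|E|$, $v_i$ for the unique vertex of $L_{n_i}$, and $\phi:=\oplus_{i=1}^{m}KMS_{ln(n_i)}$. First I would exhibit a concrete candidate for the universal object: let $\mathcal{Q}_u$ be the universal unital $C^*$-algebra generated by symbols $\{q_{fe}:e,f\in E\}$ subject to three families of relations --- (R1) $q=(q_{fe})_{n\times n}$ and $q^{t}$ are invertible in $M_n(\mathcal{Q}_u)$ (equivalently, the defining relations of $A_U(F^{\Gamma})$ occurring in Lemma~\ref{qsubgroupAUF}); (R2) the assignment $S_e\mapsto\sum_{f\in E}S_f\otimes q_{fe}$ extends to a unital $*$-homomorphism $\alpha_u\colon C^*(\Gamma)\to C^*(\Gamma)\otimes\mathcal{Q}_u$, i.e.\ the candidate images $\sum_f S_f\otimes q_{fe}$ satisfy the Cuntz--Krieger relations of $C^*(\Gamma)$ (since $C^*(\Gamma)$ is finitely presented, applying functionals to the first leg turns this into finitely many $*$-polynomial relations in the $q_{fe}$); and (R3) $(\phi\otimes\mathrm{id})\alpha_u(S_\gamma S_\mu^{*})=\phi(S_\gamma S_\mu^{*})\,1$ for all $\gamma,\mu\in E^{<\infty}(\Gamma)$ with $r(\gamma)=r(\mu)$. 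The category $\mathfrak{C}_{\oplus KMS}^{Lin}$ is nonempty (the trivial CQG $\mathbb{C}$, acting by $x\mapsto x\otimes 1$, is an object), and every object of the category satisfies (R1)--(R3): (R1) holds by Lemma~\ref{qsubgroupAUF}, while (R2) and (R3) are immediate from the definition of the category. Hence every object factors through $\mathcal{Q}_u$, and in particular $\mathcal{Q}_u\neq 0$. Note also that, since $\mathrm{span}\{S_\gamma S_\mu^{*}:r(\gamma)=r(\mu)\}$ is dense in $C^*(\Gamma)$ by Proposition~\ref{graphprop}(vi) and both $(\phi\otimes\mathrm{id})\alpha_u$ and $\phi(\cdot)1$ are continuous, (R3) in fact forces $(\phi\otimes\mathrm{id})\alpha_u=\phi(\cdot)1$ on all of $C^*(\Gamma)$.

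The next step is to put a coproduct on $\mathcal{Q}_u$. Set $\Delta(q_{fe}):=\sum_{g\in E}q_{fg}\otimes q_{ge}$; by the universal property of $\mathcal{Q}_u$, to see that this extends to a unital $*$-homomorphism $\Delta\colon\mathcal{Q}_u\to\mathcal{Q}_u\otimes\mathcal{Q}_u$ it is enough to verify that the matrix $\big(\sum_{g}q_{fg}\otimes q_{ge}\big)_{f,e}$ satisfies (R1)--(R3) inside $\mathcal{Q}_u\otimes\mathcal{Q}_u$. Relation (R1) is the standard fact that a ``product'' of two fundamental representations is again invertible with invertible transpose. For (R2) one computes $\sum_{f}S_f\otimes\Delta(q_{fe})=(\alpha_u\otimes\mathrm{id})\big(\alpha_u(S_e)\big)$, and $(\alpha_u\otimes\mathrm{id})\circ\alpha_u$ is a composition of unital $*$-homomorphisms, so the relations (R2) are inherited automatically. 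For (R3), writing $\alpha_u(x)=\sum_j y_j\otimes a_j$ one gets $(\phi\otimes\mathrm{id}\otimes\mathrm{id})(\alpha_u\otimes\mathrm{id})\alpha_u(x)=1\otimes(\phi\otimes\mathrm{id})\alpha_u(x)=\phi(x)(1\otimes 1)$, using the global form of (R3) for $\alpha_u$ established above. Coassociativity $(\Delta\otimes\mathrm{id})\Delta=(\mathrm{id}\otimes\Delta)\Delta$ then holds on the generators $q_{fe}$, hence on all of $\mathcal{Q}_u$, and the Podle\'s density conditions follow from the invertibility in (R1) exactly as for $A_U(F^{\Gamma})$. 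Thus $(\mathcal{Q}_u,\Delta)$ would be a CQG --- in fact a CMQG with fundamental representation $q$ --- and by construction $\alpha_u$ is a linear action of $\mathcal{Q}_u$ on $C^*(\Gamma)$ preserving $\phi$.

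It then remains to check faithfulness of $\alpha_u$ and the universal property. For faithfulness, fix $f\in E_i$, pick a state $\psi_i$ on $C^*(\Gamma)$ with $\psi_i(p_{v_i})=1$, and set $\theta_f:=\psi_i(S_f^{*}\,\cdot\,)$; since $S_f^{*}S_g=\delta_{fg}p_{v_i}$ for $g\in E_i$ and $S_f^{*}S_g=0$ otherwise (Proposition~\ref{graphprop}(i)), one obtains $(\theta_f\otimes\mathrm{id})\alpha_u(S_e)=q_{fe}$, so every generator of $\mathcal{Q}_u$ lies in the $*$-algebra generated by $\{(\theta\otimes\mathrm{id})\alpha_u(x):\theta\in C^*(\Gamma)^{*},\ x\in C^*(\Gamma)\}$; hence $\alpha_u$ is faithful and $((\mathcal{Q}_u,\Delta),\alpha_u)$ is an object of $\mathfrak{C}_{\oplus KMS}^{Lin}$. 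Finally, given any object $((Q,\Delta_Q),\alpha_Q)$ with $\alpha_Q(S_e)=\sum_{f}S_f\otimes q^{Q}_{fe}$, the $q^{Q}_{fe}$ satisfy (R1)--(R3), so the universal property of $\mathcal{Q}_u$ yields a unital $*$-homomorphism $\Phi\colon\mathcal{Q}_u\to Q$ with $\Phi(q_{fe})=q^{Q}_{fe}$; it is surjective because the $q^{Q}_{fe}$ generate $Q$, it is a CQG morphism because it intertwines the coproducts on generators, and $(\mathrm{id}_{C^*(\Gamma)}\otimes\Phi)\alpha_u=\alpha_Q$ on the $S_e$, hence on all of $C^*(\Gamma)$. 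Therefore $((\mathcal{Q}_u,\Delta),\alpha_u)$ would be the universal object of $\mathfrak{C}_{\oplus KMS}^{Lin}$, which is the assertion of Theorem~\ref{KMS}.

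The one point that needs genuine care is the well-definedness of $\Delta$, and within that, the verification of relation (R3) for the candidate fundamental representation $\big(\sum_{g}q_{fg}\otimes q_{ge}\big)$: this works precisely because (R3), which was imposed only on the spanning family $\{S_\gamma S_\mu^{*}\}$, can first be upgraded to all of $C^*(\Gamma)$ by continuity before the iterated-action identity is applied. Everything else is routine and parallels Theorem~3.9 of \cite{Mandal}. An equivalent route would be to define $\mathcal{Q}_u$ as the quotient of $A_U(F^{\Gamma})$ by the intersection of the kernels of the surjections onto all objects of the category; then the burden moves to checking that this ideal is a Hopf ideal, i.e.\ that the coproduct of $A_U(F^{\Gamma})$ descends to it, which is the subtlety to be careful about in that formulation.
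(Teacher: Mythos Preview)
Your proposal is correct and follows essentially the same approach as the paper, which simply defers to the argument of Theorem~3.9 of \cite{Mandal} with Lemma~\ref{qsubgroupAUF} replacing Proposition~3.8 of \cite{Mandal}. In fact you have fleshed out considerably more detail than the paper's one-line proof, including the careful point about upgrading (R3) from the spanning family $\{S_\gamma S_\mu^{*}\}$ to all of $C^*(\Gamma)$ by continuity before verifying that $\Delta$ is well-defined.
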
	

\noindent For the graph $\sqcup_{i=1}^{m} ~~ L_{n_i}$, we denote the underlying CQG of the universal object of $\mathfrak{C}_{\oplus KMS}^{Lin}$ by $Q_{\oplus KMS}^{Lin}(\sqcup_{i=1}^{m} ~~ L_{n_i})$. From the proof of the theorem above, one can notice that the universal object $Q_{\oplus KMS}^{Lin}(\sqcup_{i=1}^{m} ~~ L_{n_i})$ appears as a CMQG with the fundamental representation $U:=(q_{ef})_{n \times n}$ (where $n=\sum\limits_{i=1}^{m} n_i $) such that $\alpha(S_e)=\sum\limits_{f \in E(\sqcup_{i=1}^{m} ~~ L_{n_i})} S_f \otimes q_{fe}$. Hence, as a particular case of Lemma \ref{qsubgroupAUF}, we have
\begin{align} 
	& {U^t}^* F^{\Gamma} U^t=F^{\Gamma} \nonumber \\
	\Leftrightarrow & \sum\limits_{i=1}^{m} \sum\limits_{g \in E(L_{n_i})} n_i ~ q_{eg}^* q_{fg} =n_p \delta_{ef} ~~ \text{ for all } e,f \in E(L_{n_p}) \text{ and for each } p \in [m]. \label{relUtF}
\end{align}\\[0.6cm]

\noindent We will compute the quantum symmetry of the direct sum of Cuntz algebras (viewing it as a graph $C^*$-algebra) with respect to the categories introduced in \cite{Mandal} and \cite{Mandalkms}.
\section{Direct sum of non-isomorphic Cuntz algebras}
\begin{thm} 
\label{thm1}
Let $\{\mathcal{O}_{n_i}\}_{i=1}^{m}$ be a finite family of Cuntz algebras where all $n_i$'s are distinct. Then $ Q_{\tau}^{Lin}(\sqcup_{i=1}^{m} L_{n_i} ) \cong *_{i=1}^{m}Q_{\tau}^{Lin}(L_{n_i}) \cong *_{i=1}^{m} U_{n_i}^{+} $.  
\end{thm}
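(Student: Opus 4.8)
The plan is to realize $Q_\tau^{Lin}(\sqcup_{i=1}^m L_{n_i})$ and the free product $B:=*_{i=1}^m Q_\tau^{Lin}(L_{n_i})$ as quotients of one another by mutually inverse CQG morphisms, and then to invoke $Q_\tau^{Lin}(L_n)\cong U_n^+$. The latter identification is known; in any case it is immediate from the theorem recalled above, since for $\Gamma=L_n$ one has $\mathcal V=\emptyset$, $\mathcal E=E(L_n)\times E(L_n)$ and $F^{L_n}=nI_{n\times n}$, so that $Q_\tau^{Lin}(L_n)$ is a quotient of $A_{U^t}(nI_{n\times n})=U_n^+$, while conversely $U_n^+$ acts on $\mathcal O_n=C^*(L_n)$ linearly and $\tau$-preservingly via $\alpha(S_e)=\sum_f S_f\otimes u_{fe}$, giving the reverse surjection. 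Throughout write $\Gamma=\sqcup_i L_{n_i}$, $E_i=E(L_{n_i})$, $E=\sqcup_i E_i$, and let $v_i$ be the vertex of $L_{n_i}$, so $p_{v_i}=\sum_{e\in E_i}S_eS_e^*=S_f^*S_f$ for each $f\in E_i$, $\sum_i p_{v_i}=1$, $\mathcal V=\emptyset$, $\mathcal E=\{(e,f):e,f\in E_i\text{ for some }i\}$, $p_{v_i}\in\mathcal V_{2,+}$ with $\tau(p_{v_i})=n_i$, and $F^\Gamma=\mathrm{diag}(n_1I_{n_1},\dots,n_mI_{n_m})$.

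For the first morphism I would verify that $B$, with its canonical inclusions $\iota_k$ and the $k$-th fundamental representation $u^{(k)}=(u^{(k)}_{fe})_{e,f\in E_k}$, is an object of $\mathfrak C_\tau^{Lin}(\Gamma)$ under $\alpha_B(S_e):=\sum_{f\in E_i}S_f\otimes \iota_i(u^{(i)}_{fe})$ for $e\in E_i$. Using $C^*(\Gamma)\cong\oplus_i\mathcal O_{n_i}$ with the $p_{v_i}$ central, this reduces to noting that the restriction of $\alpha_B$ to the $i$-th corner equals $(\mathrm{id}\otimes\iota_i)$ applied to the linear $\tau$-preserving action of $Q_\tau^{Lin}(L_{n_i})$ on $\mathcal O_{n_i}$, while any product of $S$'s and $S^*$'s mixing distinct components vanishes; the action equation then follows from $\Delta_B\circ\iota_k=(\iota_k\otimes\iota_k)\circ\Delta_{(k)}$ (the coproduct of the $k$-th factor), and the Podle\'s condition, faithfulness and $\tau$-invariance are inherited from the pieces. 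Universality of $Q_\tau^{Lin}(\Gamma)$ then gives a surjective CQG morphism $\Psi:Q_\tau^{Lin}(\Gamma)\twoheadrightarrow B$ with $\Psi(q_{fe})=\iota_i(u^{(i)}_{fe})$ for $e,f\in E_i$ and $\Psi(q_{fe})=0$ for $e,f$ in different components.

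The heart of the argument is the reverse morphism, and for it I would first prove that the universal action $\alpha$ of $Q:=Q_\tau^{Lin}(\Gamma)$ is block-diagonal, i.e. $q_{fe}=0$ whenever $e\in E_i$, $f\in E_j$ with $i\ne j$. Fixing $e\in E_i$ and using $p_{v_i}=S_e^*S_e$, one computes $\alpha(p_{v_i})=\alpha(S_e)^*\alpha(S_e)=\sum_k p_{v_k}\otimes P_k^{(i)}$ where $P_k^{(i)}:=\sum_{f\in E_k}q_{fe}^*q_{fe}$; since $\alpha$ is a unital $*$-homomorphism and $\{p_{v_k}\}$ is linearly independent, each $P_k^{(i)}$ is a projection and, for each fixed $k$, $P_k^{(i)}P_k^{(j)}=\delta_{ij}P_k^{(i)}$, while $\tau$-invariance applied to $p_{v_i}\in\mathcal V_{2,+}$ yields $\sum_k n_kP_k^{(i)}=n_i1$. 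Now order the (distinct) $n_i$ increasingly and induct on $i$: for $k$ with $n_k>n_i$, positivity gives $P_k^{(i)}\le (n_i/n_k)1$ with $n_i/n_k<1$, hence $P_k^{(i)}=0$ because a projection dominated by $c\cdot1$ with $c<1$ must vanish; for $k$ with $n_k<n_i$ the inductive hypothesis has already produced $P_k^{(k)}=1$ and $P_k^{(j)}=0$ for $j\ne k$, so once more $P_k^{(i)}=0$; thus $P_k^{(i)}=\delta_{ki}1$, and then $P_i^{(j)}P_i^{(i)}=0$ with $P_i^{(i)}=1$ forces $P_i^{(j)}=0$ for $j\ne i$, completing the induction. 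In particular $\sum_{f\in E_k}q_{fe}^*q_{fe}=0$ for $k\ne i$, so $q_{fe}=0$ for $f\in E_k$, $e\in E_i$, $k\ne i$. This is the step I expect to be the main obstacle, and it is exactly where distinctness of the $n_i$ enters; if two of them coincided the relevant projections could be nontrivial, which is precisely the mechanism producing the $S_m^+$-factor in the isomorphic case of Section 4.

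Granting block-diagonality, $Q_i:=C^*\{q_{fe}:e,f\in E_i\}$ is a quantum subgroup of $Q$ (the coproduct restricts to $Q_i\otimes Q_i$ because the mixed generators are $0$), and $\alpha$ restricts to a faithful linear $\tau$-preserving action of $Q_i$ on $\mathcal O_{n_i}=C^*(L_{n_i})$, i.e. an object of $\mathfrak C_\tau^{Lin}(L_{n_i})$; universality gives a surjective CQG morphism $\pi_i:Q_\tau^{Lin}(L_{n_i})\twoheadrightarrow Q_i$ with $\pi_i(u^{(i)}_{fe})=q_{fe}$. Since $Q$ is generated by $\bigcup_i Q_i$ and each $Q_i\hookrightarrow Q$ is a CQG morphism, the universal property of the free product of CQGs yields a surjective CQG morphism $\Phi:B\twoheadrightarrow Q$ with $\Phi(\iota_i(u^{(i)}_{fe}))=q_{fe}$. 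Finally $\Phi\circ\Psi$ fixes every $q_{fe}$ and $\Psi\circ\Phi$ fixes every $\iota_i(u^{(i)}_{fe})$; as these families generate $Q$ and $B$ respectively, $\Phi$ and $\Psi$ are mutually inverse CQG isomorphisms, and combining with $Q_\tau^{Lin}(L_{n_i})\cong U_{n_i}^+$ gives the stated result.
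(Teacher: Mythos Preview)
Your proof is correct and follows the same overall architecture as the paper's: establish that the coefficients $q_{fe}$ vanish whenever $e,f$ lie in different components, and then identify the resulting block-diagonal object with the free product. The identification step (your $\Phi,\Psi$) is spelled out more explicitly than in the paper, which simply invokes Proposition~4.12 of \cite{Mandal} once block-diagonality is known.

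The genuine difference is in how you prove block-diagonality. The paper works with the equation obtained from $\sum_i p_{v_i}=1$, multiplies by $p_{v_i}\otimes 1$, sums over all edge indices, and then uses unitarity of $U^t$ to rewrite the result as a strictly positive linear combination (with coefficients $\tfrac{1}{n_j}-\tfrac{1}{n_1}$) of positive elements equal to zero, forcing each to vanish; this is iterated by induction on the ordering $n_1>n_2>\cdots>n_m$. You instead exploit the projection structure directly: from $\alpha(p_{v_i})=\sum_k p_{v_k}\otimes P_k^{(i)}$ you read off that the $P_k^{(i)}$ are mutually orthogonal projections (for fixed $k$), and $\tau$-invariance on $p_{v_i}\in\mathcal V_{2,+}$ gives $\sum_k n_kP_k^{(i)}=n_i\cdot 1$, whence $P_k^{(i)}\le (n_i/n_k)\cdot 1$ and the spectral observation ``projection $\le c\cdot 1$ with $c<1$ is zero'' kills the blocks with $n_k>n_i$; the induction then proceeds with $n_i$ increasing. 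Your argument is cleaner and more conceptual, and it isolates exactly the two ingredients (projection structure of the induced vertex action, plus the weighted row-sum from $\tau$) that make distinctness of the $n_i$ decisive. The paper's computation, on the other hand, transfers almost verbatim to the $\oplus KMS$-category in Theorem~\ref{diskmsthm}, where your use of $\tau(p_{v_i})=n_i$ would need to be replaced by the analogous identity coming from ${U^t}^*F^\Gamma U^t=F^\Gamma$.
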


\begin{center}
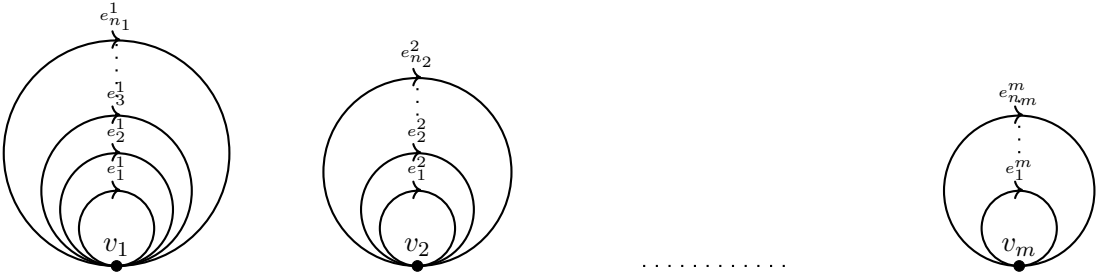
\begin{figure}[htpb]
\begin{tikzpicture}
\draw[fill=black] (0,0) circle (2pt) node[anchor=south]{$v_{1}$};
\draw[black,thick](0,0.5) circle (0.5) node[above=0.25]{\rmidarrow}node[above=0.5]{${\scriptscriptstyle{e_1^1}}$};
\draw[black,thick](0,0.75) circle (0.75) node[above=0.5]{\rmidarrow} node[above=0.75]{${\scriptscriptstyle{e_2^1}}$};
\draw[black,thick](0,1) circle (1) node[above=0.75]{\rmidarrow} node[above=1]{${\scriptscriptstyle{e_3^1}}$};
\draw[black,thick](0,1.50) circle (1.50) node[above=1.25]{\rmidarrow}node[above=1.50]{${\scriptscriptstyle{e_{n_1}^1}}$};
\draw[loosely dotted, black, thick] (0,2.25)--(0,3);

\draw[fill=black] (4,0) circle (2pt) node[anchor=south]{$v_{2}$};
\draw[black,thick](4,0.5) circle (0.5) node[above=0.25]{\rmidarrow}node[above=0.5]{${\scriptscriptstyle{e_1^2}}$};
\draw[black,thick](4,0.75) circle (0.75) node[above=0.5]{\rmidarrow} node[above=0.75]{${\scriptscriptstyle{e_2^2}}$};
\draw[black,thick](4,1.25) circle (1.25) node[above=1]{\rmidarrow}node[above=1.25]{${\scriptscriptstyle{e_{n_2}^2}}$};
\draw[loosely dotted, black,thick] (4,2)--(4,2.5);

\draw[fill=black] (12,0) circle (2pt) node[anchor=south]{$v_{m}$};
\draw[black,thick](12,0.5) circle (0.5) node[above=0.25]{\rmidarrow}node[above=0.5]{${\scriptscriptstyle{e_1^m}}$};
\draw[black,thick](12,1) circle (1) node[above=0.75]{\rmidarrow}node[above=1]{${\scriptscriptstyle{e_{n_m}^m}}$};
\draw[loosely dotted, black,thick] (12,1.50)--(12,2.25);

\draw[loosely dotted, black,thick] (7,0)--(9,0);
\end{tikzpicture}
\caption{$\sqcup_{i=1}^{m} L_{n_i}$}      \label{L_ni_distinct}
\end{figure} 
\end{center}
\begin{proof}
Let $V(L_{n_i})=\{i\}$ for all $i \in [m]$ and $E_i:=E(L_{n_i})=\{e_1^i, e_2^i,...,e_{n_i}^i\}$ (see Figure \ref{L_ni_distinct}). Therefore, $E:=E(\sqcup_{i=1}^{m} L_{n_i})=\sqcup_{i=1}^{m} E_i .$
Without loss of generality, we may assume that $n_1 > n_2> \cdots >n_m$.\\
 The linear action $\alpha: C^*(\sqcup_{i=1}^{m} L_{n_i}) \to C^*(\sqcup_{i=1}^{m} L_{n_i})  \otimes Q_{\tau}^{Lin}(\sqcup_{i=1}^{m} L_{n_i} )$ is given by  $\alpha(S_{e^j_l})=\sum\limits_{f \in E} S_f \otimes q_{f e^j_l}$.
For each $i \neq j $, it's enough to show that $q_{e^i_k e^j_l}=0 ~~ \forall  k \in [n_i] $ and $\forall l \in [n_j] $  because in this scenario the above action reduces to $\alpha(S_{e^j_l})=\sum\limits_{f \in E_j} S_f \otimes q_{f e^j_l}=\sum\limits_{k \in [n_j]} S_{e^j_k} \otimes q_{e^j_k e^j_l}$ and the result follows using Proposition 4.12 of \cite{Mandal}. Since $q_{e^i_k e^j_l}=0 \Rightarrow \kappa(q_{e^i_k e^j_l})=0 \Rightarrow q_{e^j_l e^i_k}=0 $, it suffices to show $q_{e^i_k e^j_l}=0 ~~ \forall k \in [n_i]  $ and $\forall l \in [n_j]$ with $i<j$. \\
We will show it inductively. Let\\[0.2cm]
$\mathcal{S}(i): q_{e^i_k f}=0 ~~  \forall k \in [n_i] \text{ and } f \in \sqcup_{j=i+1}^{m} E_j $.\\[0.2cm] 
Also, for our convenience, we define  $Q_{ef}= q_{ef}^*q_{ef}$.\\
Now, we will show that $\mathcal{S}(1)$ is true. Since
\begin{align*} 
& \sum_{i=1}^{m} p_{i}=1, \\
\Rightarrow ~~& S_{e_{j_1}^1}^{*} S_{e_{j_1}^1}+S_{e_{j_2}^2}^{*}S_{e_{j_2}^2}+ \cdots + S_{e_{j_m}^m}^{*}S_{e_{j_m}^m} =1 ~~~~\forall j_i \in [n_i] \text{ and for each } i \in [m]  \\
\Rightarrow ~~& \alpha(S_{e_{j_1}^1}^{*} S_{e_{j_1}^1}+S_{e_{j_2}^2}^{*}S_{e_{j_2}^2}+ \cdots + S_{e_{j_m}^m}^{*}S_{e_{j_m}^m} ) =1 \otimes 1 ~~~~\forall j_i \in [n_i] \text{ and for each } i \in [m] \\
\Rightarrow ~~& \sum_{e \in E} S_e^{*}S_e \otimes (Q_{ee_{j_1}^1}+ Q_{ee_{j_2}^2}+ \cdots + Q_{ee_{j_m}^m})=1 \otimes 1 ~~~~\forall j_i \in [n_i] \text{ and for each } i \in [m]. 
\end{align*}
Now, for each $i \in [m]$, multiplying both sides by $(p_i \otimes 1) $, we get $m$ equations of the form
\begin{equation}
\sum_{k \in [n_i]} (Q_{e^{i}_{k} e_{j_1}^1}+ Q_{e^{i}_{k} e_{j_2}^2}+ \cdots + Q_{e^{i}_{k} e_{j_m}^m})=1 . \label{niso1}
\end{equation}
For $i=1$, we get
\begin{equation*}
\sum_{k \in [n_1]} (Q_{e^{1}_{k} e_{j_1}^1}+ Q_{e^{1}_{k} e_{j_2}^2}+ \cdots + Q_{e^{1}_{k} e_{j_m}^m})=1
\end{equation*}
for each $j_t \in [n_t]$.\\
Now taking the sum over $j_t \in [n_t]$ for each $t \in [m]$, we get
\begin{align} \label{niso1.5}
& \sum_{j_m \in [n_m]} \cdots \sum_{j_2 \in [n_2]} \sum_{j_1 \in [n_1]} \left[ \sum_{k \in [n_1]} (Q_{e^{1}_{k} e_{j_1}^1}+ Q_{e^{1}_{k} e_{j_2}^2}+ \cdots + Q_{e^{1}_{k} e_{j_m}^m}) \right]   = \prod_{i=1}^{m} n_i \nonumber \\
\Rightarrow ~~&  \sum_{i=1}^{m}  \left\lbrace \left(  \prod_{k \in [m], k \neq i} n_k \right) \sum_{k \in [n_1]}\sum_{j_i \in [n_i] }Q_{e^{1}_{k} e_{j_i}^i}\right\rbrace  = \prod_{i=1}^{m} n_i \nonumber \\
\Rightarrow ~~& \frac{1}{n_1} \sum_{k \in [n_1]}\sum_{ j_1 \in [n_1] }Q_{e^{1}_{k} e_{j_1}^1}+ \frac{1}{n_2} \sum_{k \in [n_1]}\sum_{ j_2 \in [n_2] }Q_{e^{1}_{k} e_{j_2}^2}+ \cdots + \frac{1}{n_m} \sum_{k \in [n_1]}\sum_{ j_m \in [n_m] }Q_{e^{1}_{k} e_{j_m}^m}=1 
\end{align}

\begin{multline}
\Rightarrow ~~  \frac{1}{n_1} \sum_{k \in [n_1]} \left[ \sum_{ j_1 \in [n_1] }Q_{e^{1}_{k} e_{j_1}^1}+ \sum_{ j_2 \in [n_2] }Q_{e^{1}_{k} e_{j_2}^2} +\cdots + \sum_{ j_m \in [n_m] }Q_{e^{1}_{k} e_{j_m}^m} \right] + \\
\left( \frac{1}{n_2}-\frac{1}{n_1} \right) \sum_{k \in [n_1]}\sum_{ j_2 \in [n_2] }Q_{e^{1}_{k} e_{j_2}^2}+ \cdots + \left( \frac{1}{n_m}-\frac{1}{n_1} \right) \sum_{k \in [n_1]}\sum_{ j_m \in [n_m] }Q_{e^{1}_{k} e_{j_m}^m}=1 \label{niso2}
\end{multline}
Since for each $k \in [n_1]$, $\left[ \sum\limits_{ j_1 \in [n_1] }Q_{e^{1}_{k} e_{j_1}^1}+ \sum\limits_{ j_2 \in [n_2] }Q_{e^{1}_{k} e_{j_2}^2} +\cdots + \sum\limits_{ j_m \in [n_m] }Q_{e^{1}_{k} e_{j_m}^m} \right]=1 $ (as $U^t$ is unitary), therefore the above equation can be written as
\begin{align*}
& \frac{1}{n_1} \sum_{k \in [n_1]} 1 + 
\left( \frac{1}{n_2}-\frac{1}{n_1} \right) \sum_{k \in [n_1]}\sum_{ j_2 \in [n_2] }Q_{e^{1}_{k} e_{j_2}^2}+ \cdots + \left( \frac{1}{n_m}-\frac{1}{n_1} \right) \sum_{k \in [n_1]}\sum_{ j_m \in [n_m] }Q_{e^{1}_{k} e_{j_m}^m}=1  \\
\Rightarrow ~~& \left( \frac{1}{n_2}-\frac{1}{n_1} \right) \sum_{k \in [n_1]}\sum_{ j_2 \in [n_2] }Q_{e^{1}_{k} e_{j_2}^2}+ \cdots + \left( \frac{1}{n_m}-\frac{1}{n_1} \right) \sum_{k \in [n_1]}\sum_{ j_m \in [n_m] }Q_{e^{1}_{k} e_{j_m}^m}=0 ~~ {(\because \sum_{k \in [n_1]} 1 = n_1 )}.
\end{align*}
Since $\left( \frac{1}{n_i}-\frac{1}{n_1} \right) > 0$ for all $i \in \{2,3,...m\}$, therefore we can conclude that $ Q_{e^{1}_{k} e_{j_2}^2}=Q_{e^{1}_{k} e_{j_3}^3}=...=Q_{e^{1}_{k} e_{j_m}^m}=0$ for all $k \in [n_1]$ and $ j_i \in [n_i]$ whenever $i \in \{2,...,m\}$.\\
In other words, $q_{e^{1}_{k} f}=0$ for all $k \in [n_1]$ and $f \in \sqcup_{i=2}^{m}E_i $. Therefore, $\mathcal{S}(1)$ is true.\\

By induction hypothesis, we assume that $\mathcal{S}(i)$ is true for $i=1,2,...,l-1 $, i.e. for each  $i=1,2,...,l-1, ~ q_{e^{i}_{k} f}=0 ~~ \forall k \in [n_i]$ and $f \in \sqcup_{s   =i+1}^{m}E_s $.\\
Now, we have to show that $\mathcal{S}(l)$ is also true.\\

\noindent Now, putting $i=l$ in Equation $\eqref{niso1}$, we get
\begin{equation*}
\sum_{k \in [n_l]} (Q_{e^{l}_{k} e_{j_1}^1}+ Q_{e^{l}_{k} e_{j_2}^2}+ \cdots + Q_{e^{l}_{k} e_{j_m}^m})=1
\end{equation*} 
Now, by induction hypothesis, for each $k \in [n_l]$, $Q_{e^{l}_{k} e_{j_1}^1}=Q_{e^{l}_{k} e_{j_2}^2}=...=Q_{e^{l}_{k} e_{j_{l-1}}^{l-1}}=0 $ for any ${j_1} \in [n_1],...,{j_{l-1}} \in [n_{l-1}] .$ Therefore the above equation reduces into 
\begin{equation*}
\sum_{k \in [n_l]} (Q_{e^{l}_{k} e_{j_l}^l}+ Q_{e^{l}_{k} e_{j_{l+1}}^{l+1}}+ \cdots + Q_{e^{l}_{k} e_{j_m}^m})=1
\end{equation*} 
Now taking the sum over $j_i \in [n_i]$ for each $i \in \{l,(l+1),...,m\}$, we get
\begin{align}
& \sum_{j_m \in [n_m]} \cdots \sum_{j_{l+1} \in [n_{l+1}]} \sum_{j_l \in [n_l]} \left[ \sum_{k \in [n_l]} (Q_{e^{l}_{k} e_{j_l}^l}+ Q_{e^{l}_{k} e_{j_{l+1}}^{l+1}}+ \cdots + Q_{e^{l}_{k} e_{j_m}^m})\right]   = \prod_{i=l}^{m} n_i \nonumber \\
\Rightarrow ~~& \frac{1}{n_l} \sum_{k \in [n_l] }\sum_{ j_l \in [n_l] }Q_{e^{l}_{k} e_{j_l}^1}+ \frac{1}{n_{l+1}} \sum_{k \in [n_l]}\sum_{ j_{l+1} \in [n_{l+1}] }Q_{e^{l}_{k} e_{j_{l+1}}^{l+1}}+ \cdots + \frac{1}{n_m} \sum_{k \in [n_l]}\sum_{ j_m \in [n_m] }Q_{e^{l}_{k} e_{j_m}^m}=1. \label{niso2.5}
\end{align}
\begin{multline}
\therefore  \frac{1}{n_l} \sum_{k \in [n_l]} \left[ \sum_{ j_l \in [n_l] }Q_{e^l_k e_{j_l}^l}+ \sum_{ j_{l+1} \in [n_{l+1}] }Q_{e^l_k e_{j_{l+1}}^{l+1}} +\cdots + \sum_{ j_m \in [n_m] }Q_{e^l_k e_{j_m}^m} \right] + \\
\left( \frac{1}{n_{l+1}}-\frac{1}{n_l} \right) \sum_{k \in [n_l]}\sum_{ j_{l+1} \in [n_{l+1}] }Q_{e^l_k e_{j_{l+1}}^{l+1}}+ \cdots + \left( \frac{1}{n_m}-\frac{1}{n_l} \right) \sum_{k \in [n_l]}\sum_{ j_m \in [n_m] }Q_{e^l_k e_{j_m}^m}=1.  \label{niso3}
\end{multline}
Using the fact that $U^t=(q_{ef})^t$ is unitary and the induction hypothesis again, one can easily find that
     $$\left[ \sum_{ j_l \in [n_l] }Q_{e^l_k e_{j_l}^l}+ \sum_{ j_{l+1} \in [n_{l+1}] }Q_{e^l_k e_{j_{l+1}}^{l+1}} +\cdots + \sum_{ j_m \in [n_m] }Q_{e^l_k e_{j_m}^m} \right]=1 .$$ 
Therefore, one can rewrite Equation \eqref{niso3} as before:
\begin{equation*}
   \left( \frac{1}{n_{l+1}}-\frac{1}{n_l} \right) \sum_{k \in [n_l]}\sum_{ j_{l+1} \in [n_{l+1}] }Q_{e^l_k e_{j_{l+1}}^{l+1}}+ \cdots + \left( \frac{1}{n_m}-\frac{1}{n_l} \right) \sum_{k \in [n_l]}\sum_{ j_m \in [n_m] }Q_{e^l_k e_{j_m}^m}=0
\end{equation*}  
Hence, for all ${k \in [n_l]}$, we get  $ Q_{e^l_k e_{j_{l+1}}^{l+1}}=...=Q_{e^l_k e_{j_m}^m}=0$ for all ${j_{i}} \in [n_i] $ with $i \in \{l+1,...,m\}.$ 
Therefore, $\mathcal{S}(l)$ is also true.  
\end{proof}	 
  
  \begin{rem}
  	Though the statement of Theorem \ref{thm1} is written in the sense of `CGQ isomorphic', the proof suggests that the result is also true in `identical' sense, i.e., under the hypothesis of Theorem \ref{thm1}, we have $ Q_{\tau}^{Lin}(\sqcup_{i=1}^{m} L_{n_i} ) \approx *_{i=1}^{m} Q_{\tau}^{Lin}(L_{n_i}) \approx *_{i=1}^{m} U_{n_i}^{+} $.
  \end{rem}
  
  \begin{rem}
  	It is known that if we take $m$ connected graphs $\{\Gamma_{i}\}_{i=1}^{m}$ which are `quantum non-isomorphic' to each other, then 
  	\begin{equation*}
  	QAut_{Ban}(\sqcup_{i=1}^{m} ~ \Gamma_{i}) \approx *_{i=1}^{m} ~~ QAut_{Ban}(\Gamma_{i})
  	\end{equation*} 
  	with respect to their standard fundamental matrix representation (consult \cite{Lupini}, \cite{SSthesis}, \cite{QAut_tree} and \cite{Meu} for details).  From Theorem \ref{thm1},  for non-isomorphic Cuntz algebras $\{ \mathcal{O}_{n_i}\}$ we get that
  	\begin{equation*}
  	Q_{\tau}^{Lin}(\sqcup_{i=1}^{m} ~ L_{n_i}) \approx  *_{i=1}^{m} ~~ Q_{\tau}^{Lin}(L_{n_i}).
  	\end{equation*} 
\noindent It is natural to ask: does a similar result hold if we replace $\{L_{n_i}\}$ by any non-isomorphic class of graphs? But the answer is negative in graph $C^*$-algebraic context, even if we take `quantum non-isomorphic' graphs (see the counter example in Subsection \ref{counternonisom}). But we encountered that for non-isomorphic matrix algebras $\{M_{n_i}\}$ (for distinct $n_i$'s), we also have an analogous result, i.e. 
  	\begin{equation*}
  	Q_{\tau}^{Lin}(\sqcup_{i=1}^{m} ~ P_{n_i}) \approx  *_{i=1}^{m} ~~ Q_{\tau}^{Lin}(P_{n_i}). 
  	\end{equation*}
  The proof is a simple application of Lemma 4.4 and Lemma 4.5 of \cite{rigidity}. In this article, we omit the details of the proof.	 	
  \end{rem}

\begin{subsection}{Counter Example} \label{counternonisom}
In this subsection, we provide two non-isomorphic graphs $\Gamma_1$ and $\Gamma_2$ to show that $Q_{\tau}^{Lin}(\Gamma_1 \sqcup \Gamma_2)$ is not always identical to   $Q_{\tau}^{Lin}(\Gamma_1) * Q_{\tau}^{Lin}(\Gamma_2)$ in general.\\

\noindent Consider the graph $P_1 \sqcup So_2$ (as it is shown in Figure \ref{P1So2}), which is the disjoint union of two non-isomorphic (even `quantum non-isomorphic') graphs $P_1$ and $So_2$, where $P_1$ and $So_2$ denote the first and second connected components of the graph shown in Figure \ref{P1So2}.\\

\begin{figure}[htpb]
	\centering
	\begin{tikzpicture}
		\draw[fill=black] (0,0) circle (2pt) node[anchor=north]{$v_{1}$};
		\draw[fill=black] (2,0) circle (2pt) node[anchor=north]{$v_{2}$};
		\draw[fill=black] (4,0) circle (2pt) node[anchor=north]{$v_{3}$};
		\draw[fill=black] (6,0) circle (2pt) node[anchor=north]{$v_{4}$};
		\draw[fill=black] (8,0) circle (2pt) node[anchor=north]{$v_{5}$};
		
		\draw[black] (0,0)-- node{\rmidarrow}  node[above]{$e_{1}$} (2,0);
		\draw[black] (4,0)-- node{\lmidarrow}  node[above]{$e_{2}$} (6,0);
		\draw[black] (6,0)-- node{\rmidarrow}  node[above]{$e_{3}$} (8,0);
	\end{tikzpicture}
	\caption{$P_1 \sqcup So_2$} \label{P1So2}
\end{figure}

\noindent By Proposition 3.4 of \cite{unitaryeasygraph}, $(Q_{\tau}^{Lin}(P_1 \sqcup So_2),q) \approx SH_{3}^{\infty +}$ and $(Q_{\tau}^{Lin}(So_2),q) \approx SH_{2}^{\infty +}$.
Thus, it is evident that $Q_{\tau}^{Lin}(P_1 \sqcup So_2) \approx SH_{3}^{\infty +}$ is not identical to $C(S^1)*SH_{2}^{\infty +} \approx Q_{\tau}^{Lin}(P_1)*Q_{\tau}^{Lin}(So_2)$, where all CMQGs are taken with respect to their standard fundamental representations as it is introduced in Subsections \ref{CQG} and \ref{QSgraph}.\\ 
Now, we remark that in the context of the quantum automorphism group of a graph (in the sense of Banica)  $QAut_{Ban}(P_1 \sqcup So_2) \approx S_2^+ \approx QAut_{Ban}(P_1) * QAut_{Ban}(So_2) $, whereas in the graph $C^*$-algebraic scenario $Q_{\tau}^{Lin}(P_1 \sqcup So_2)$ is not identical to $Q_{\tau}^{Lin}(P_1) * Q_{\tau}^{Lin}(So_2)$.\\
\end{subsection}

Now, we demonstrate that an analogous result for Theorem \ref{thm1} also holds with respect to the `KMS state' and the `orthogonal filtration' preserving categories for the direct sum of Cuntz algebras.
  
\begin{thm}
Let $\{\mathcal{O}_{n_i}\}_{i=1}^{m}$ be a finite family of Cuntz algebras, where all $n_i$'s are distinct. Then $ Q_{\oplus KMS}^{Lin}(\sqcup_{i=1}^{m} L_{n_i} )  \cong  *_{i=1}^m U_{n_i}^{+} \cong *_{i=1}^{m} Q_{KMS}^{Lin}(L_{n_i}). $  \label{diskmsthm}
\end{thm}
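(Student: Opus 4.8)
The plan is to mirror the argument of Theorem \ref{thm1}, since the essential combinatorial mechanism is identical, and then invoke Corollary \ref{kmscor1}-type reasoning to link the KMS-state category back to $\mathfrak{C}_\tau^{Lin}$ at the level of each individual $L_{n_i}$. First I would let $((Q,\Delta),\alpha)$ be the universal object of $\mathfrak{C}_{\oplus KMS}^{Lin}$ for $\Gamma=\sqcup_{i=1}^m L_{n_i}$, with $\alpha(S_{e^j_l})=\sum_{f\in E}S_f\otimes q_{fe^j_l}$. By Lemma \ref{qsubgroupAUF} and the discussion following Theorem \ref{KMS}, the fundamental matrix $U=(q_{ef})$ satisfies $U^*U=UU^*=I_n$ and relation \eqref{relUtF}, i.e. $\sum_{i=1}^m\sum_{g\in E(L_{n_i})}n_i\,q_{eg}^*q_{fg}=n_p\delta_{ef}$ for $e,f\in E(L_{n_p})$. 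Taking $e=f=e^p_k$ and summing over $k\in[n_p]$ gives, exactly as in Theorem \ref{thm1}, an analogue of Equation \eqref{niso1}: for each $p\in[m]$,
\begin{equation*}
\sum_{k\in[n_p]}\sum_{i=1}^m\frac{n_i}{n_p}\sum_{g\in E(L_{n_i})}q_{e^p_k g}^*q_{e^p_k g}=n_p,
\end{equation*}
which rearranges to $\sum_{k\in[n_p]}\sum_{i=1}^m\frac{n_i}{n_p}\sum_{g\in E(L_{n_i})}Q_{e^p_k g}=n_p$ in the notation $Q_{ef}=q_{ef}^*q_{ef}$.

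Next I would run the same inductive scheme $\mathcal{S}(l):\ q_{e^l_k f}=0$ for all $k\in[n_l]$ and $f\in\sqcup_{j=l+1}^m E_j$, after reducing (by $\kappa$, as in Theorem \ref{thm1}) to the case $n_1>n_2>\cdots>n_m$. For the base and inductive steps, I would combine the displayed relation above with unitarity of $U^t$ — which gives $\sum_{i=1}^m\sum_{g\in E(L_{n_i})}Q_{e^l_k g}=1$ for each fixed $k$ — and subtract an appropriate multiple so that the "diagonal block" telescopes to $1$, leaving a positive combination $\sum_{i>l}(\frac{1}{n_i}-\frac{1}{n_l})\sum_k\sum_{g\in E(L_{n_i})}Q_{e^l_k g}=0$ with strictly positive coefficients; positivity of each $Q_{e^l_k g}$ forces them all to vanish, hence $q_{e^l_k g}=0$. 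The induction hypothesis is used to discard the blocks with index $<l$ exactly as before. Once all cross terms vanish, $\alpha$ restricts to each $C^*(L_{n_i})$ and by the universal property together with Proposition 4.12 of \cite{Mandal} (giving $Q_{KMS}^{Lin}(L_{n_i})\cong U_{n_i}^+$, using Corollary \ref{kmscor1} which identifies $\mathfrak{C}_\tau^{Lin}$ and $\mathfrak{C}_{KMS}^{Lin}$ for a single $L_n$) one concludes $Q_{\oplus KMS}^{Lin}(\Gamma)\cong *_{i=1}^m U_{n_i}^+\cong *_{i=1}^m Q_{KMS}^{Lin}(L_{n_i})$.

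The main obstacle I anticipate is bookkeeping rather than conceptual: in Theorem \ref{thm1} the relevant quadratic relations came from applying $\alpha$ to $\sum_v p_v=1$ and to $S_e^*S_e$, producing clean coefficients $\frac{1}{n_i}$, whereas here the weights enter through the KMS state $\oplus_i KMS_{\ln n_i}$ with its extra factor $\frac1m$ and through relation \eqref{relUtF}; I must check carefully that after cancelling the overall $\frac1m$ the surviving coefficients $\frac{n_i}{n_p}$ (equivalently $\frac{1}{n_i}-\frac{1}{n_l}$ after the rescaling) still have the correct signs for the positivity argument to bite. A secondary point to verify is that the reduction via the antipode $\kappa$ is legitimate in the $\mathfrak{C}_{\oplus KMS}^{Lin}$ setting — but this is immediate since the universal object is a CMQG (as noted after Theorem \ref{KMS}) and $\kappa(q_{ef})=q_{fe}^{(\kappa)}$ behaves as in Theorem \ref{thm1}. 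Finally, for the isomorphism $Q_{KMS}^{Lin}(L_n)\cong U_n^+$ I would simply cite Corollary \ref{kmscor1} (the two categories coincide for a single $L_n$) and Proposition 4.12 of \cite{Mandal}; no new argument is needed there. The parallel statement for the orthogonal-filtration category $Q_{\mathfrak F}(\oplus_i\mathcal O_{n_i})$, alluded to in the paragraph preceding the theorem, would follow by the same computation applied to the filtration subspaces $\mathcal W_i,\mathcal M^{(1)}_{p,q},\mathcal M^{(2)}_{r,s}$, using that $\alpha$ preserves each $\mathcal F_i$ and hence each $\mathcal W_i$.
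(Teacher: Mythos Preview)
Your overall strategy---an inductive positivity argument forcing the cross-block generators to vanish---is exactly right, and matches the paper. But there is a genuine gap in the relations you invoke. You write that ``unitarity of $U^t$'' gives $\sum_{i}\sum_{g\in E(L_{n_i})}Q_{e^l_k g}=1$. In the category $\mathfrak{C}_{\oplus KMS}^{Lin}$ this is \emph{not} available: Lemma \ref{qsubgroupAUF} only yields that $U$ is unitary and that ${U^t}^*F^{\Gamma}U^t=F^{\Gamma}$ (equivalently relation \eqref{relUtF}). With $F^{\Gamma}=\mathrm{diag}(n_1,\ldots,n_1,\ldots,n_m,\ldots,n_m)$ non-scalar, this does \emph{not} force $(U^t)^*U^t=I$, so the unweighted row-sum $\sum_g Q_{e^l_k g}=1$ that you plan to subtract is simply not at your disposal. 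Consequently the subtraction you describe, producing coefficients $\tfrac{1}{n_i}-\tfrac{1}{n_l}$, does not go through as written.

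The paper closes this gap by using a different second relation: it applies $\alpha$ to $\sum_i p_i=1$ exactly as in Theorem \ref{thm1} to obtain Equation \eqref{niso1.5}, and then combines \emph{that} with \eqref{relUtF}. The resulting cancellation produces coefficients $\tfrac{1}{n_i^2}-\tfrac{1}{n_l^2}$ (rather than $\tfrac{1}{n_i}-\tfrac{1}{n_l}$), which are still strictly positive for $i>l$, and the positivity argument then concludes as you intend. So the fix is minor but essential: replace ``$U^t$ unitary'' by the relation coming from $\sum_i p_i=1$.

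A second omission: you do not address the converse direction. Showing that the cross terms vanish only proves $Q_{\oplus KMS}^{Lin}(\Gamma)$ is a quantum subgroup of $*_{i}U_{n_i}^+$. One must still verify that $*_{i}U_{n_i}^+$ acts linearly on $C^*(\sqcup_i L_{n_i})$ in an $\oplus_i KMS_{\ln n_i}$-preserving way. Proposition 4.12 of \cite{Mandal} and Corollary \ref{kmscor1} handle each individual $L_{n_i}$ in its own category, but the state here is the \emph{direct sum} $\oplus_i KMS_{\ln n_i}$ on the whole algebra, and preserving it requires checking $(\phi\otimes\mathrm{id})\alpha(S_\gamma S_\mu^*)=\phi(S_\gamma S_\mu^*)1$ for arbitrary paths $\gamma,\mu$; the paper carries this out case by case. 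Your remark about the orthogonal-filtration category is a separate theorem (Theorem \ref{orthothm1}) and not part of the present statement.
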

\begin{proof}
We will use the same notations and conventions introduced in the proof of Theorem \ref{thm1}. Firstly, we will show that if $(Q_{\oplus KMS}^{Lin}(\sqcup_{i=1}^{m} L_{n_i}), \alpha)$ is the universal object with the fundamental representation $U=(q_{e_k^i e_l^j})_{n \times n}$ of underlying CMQG, then $q_{e_k^i e_l^j}=0$ for all $k \in [n_i]$ and $l \in [n_j]$, whenever $i \neq j$. The strategy of the proof is again similar to Theorem \ref{thm1}, the method of induction (see the induction statement $\mathcal{S}(i)$ from Theorem \ref{thm1}). Moreover, the proof follows the same lines, but we just need to modify a few computational tricks, which we will discuss here. To show $\mathcal{S}(1)$, just like the previous theorem, again starting with  $\sum\limits_{i=1}^{m} p_i = 1$, we arrive at Equation \eqref{niso1.5}, i.e.
\begin{align*}
	& \frac{1}{n_1} \sum_{k \in [n_1]}\sum_{ j_1 \in [n_1] }Q_{e^{1}_{k} e_{j_1}^1}+ \frac{1}{n_2} \sum_{k \in [n_1]}\sum_{ j_2 \in [n_2] }Q_{e^{1}_{k} e_{j_2}^2}+ \cdots + \frac{1}{n_m} \sum_{k \in [n_1]}\sum_{ j_m \in [n_m] }Q_{e^{1}_{k} e_{j_m}^m}=1, \\
	\Rightarrow ~~ & \frac{1}{n_1^2} \sum_{k \in [n_1]}\sum_{ j_1 \in [n_1] } n_1 Q_{e^{1}_{k} e_{j_1}^1}+ \frac{1}{n_2^2} \sum_{k \in [n_1]}\sum_{ j_2 \in [n_2] } n_2 Q_{e^{1}_{k} e_{j_2}^2}+ \cdots + \frac{1}{n_m^2} \sum_{k \in [n_1]}\sum_{ j_m \in [n_m] } n_m Q_{e^{1}_{k} e_{j_m}^m}=1, 
\end{align*} 
which implies 
\begin{multline}
	\frac{1}{n_1^2} \sum_{k \in [n_1]} \left[ \sum_{ j_1 \in [n_1] }n_1 Q_{e^{1}_{k} e_{j_1}^1}+ \sum_{ j_2 \in [n_2] } n_2 Q_{e^{1}_{k} e_{j_2}^2} +\cdots + \sum_{ j_m \in [n_m] } n_m Q_{e^{1}_{k} e_{j_m}^m} \right] + \\
	\left( \frac{1}{n_2^2}-\frac{1}{n_1^2} \right) \sum_{k \in [n_1]}\sum_{ j_2 \in [n_2] } n_2 Q_{e^{1}_{k} e_{j_2}^2}+ \cdots + \left( \frac{1}{n_m^2}-\frac{1}{n_1^2} \right) \sum_{k \in [n_1]}\sum_{ j_m \in [n_m] } n_m Q_{e^{1}_{k} e_{j_m}^m}=1. \label{nisokms1}
\end{multline}
Since  $\left[ \sum\limits_{ j_1 \in [n_1] } n_1 Q_{e^{1}_{k} e_{j_1}^1}+ \sum\limits_{ j_2 \in [n_2] } n_2 Q_{e^{1}_{k} e_{j_2}^2} +\cdots + \sum\limits_{ j_m \in [n_m] } n_m Q_{e^{1}_{k} e_{j_m}^m} \right]=n_1 $  for each $k \in [n_1]$ (using Equation \eqref{relUtF}), Equation \eqref{nisokms1} reduces to
\begin{align*}
	& \frac{1}{n_1^2} \sum_{k \in [n_1]} n_1 + 
	\left( \frac{1}{n_2^2}-\frac{1}{n_1^2} \right) \sum_{k \in [n_1]}\sum_{ j_2 \in [n_2] } n_2 Q_{e^{1}_{k} e_{j_2}^2}+ \cdots + \left( \frac{1}{n_m^2}-\frac{1}{n_1^2} \right) \sum_{k \in [n_1]}\sum_{ j_m \in [n_m] } n_m Q_{e^{1}_{k} e_{j_m}^m}=1  \\
	\Rightarrow ~~& \left( \frac{1}{n_2^2}-\frac{1}{n_1^2} \right) \sum_{k \in [n_1]}\sum_{ j_2 \in [n_2] } n_2 Q_{e^{1}_{k} e_{j_2}^2}+ \cdots + \left( \frac{1}{n_m^2}-\frac{1}{n_1^2} \right) \sum_{k \in [n_1]}\sum_{ j_m \in [n_m] } n_m Q_{e^{1}_{k} e_{j_m}^m}=0.
\end{align*}
Now, $\left( \frac{1}{n_i^2}-\frac{1}{n_1^2} \right) > 0$ for all $i \in \{2,3,...m\}$ ensures that $ Q_{e^{1}_{k} e_{j_2}^2}=Q_{e^{1}_{k} e_{j_3}^3}=...=Q_{e^{1}_{k} e_{j_m}^m}=0$ for all $k \in [n_1]$ and $ j_i \in [n_i]$ whenever $i \in \{2,...,m\}$, which concludes the base case of induction $\mathcal{S}(1)$. \\
Assuming $\mathcal{S}(i)$ holds for each $i \in [l-1]$, we are now going to show that $\mathcal{S}(l)$ is true. Just like the proof of Theorem \ref{thm1}, starting with Equation \eqref{niso1} (by putting $i=l$) and using the induction hypothesis, we get Equation \eqref{niso2.5}. Now, Equation \eqref{niso2.5} implies     
\begin{multline}
	\therefore  \frac{1}{n_l^2} \sum_{k \in [n_l]} \left[ \sum_{ j_l \in [n_l] } n_l Q_{e^l_k e_{j_l}^l}+ \sum_{ j_{l+1} \in [n_{l+1}] } n_{l+1} Q_{e^l_k e_{j_{l+1}}^{l+1}} +\cdots + \sum_{ j_m \in [n_m] } n_m Q_{e^l_k e_{j_m}^m} \right] + \\
	\left(\frac{1}{n_{l+1}^2}-\frac{1}{n_l^2} \right) \sum_{k \in [n_l]}\sum_{ j_{l+1} \in [n_{l+1}] } n_{l+1} Q_{e^l_k e_{j_{l+1}}^{l+1}}+ \cdots + \left( \frac{1}{n_m^2}-\frac{1}{n_l^2} \right) \sum_{k \in [n_l]}\sum_{j_m \in [n_m] } n_m Q_{e^l_k e_{j_m}^m}=1  \label{nisokms2}
\end{multline}
(which is just a modification of Equation \eqref{niso3}).\\
Again using the induction hypothesis, Equation \eqref{relUtF} reduces to 
 $$\left[ \sum_{ j_l \in [n_l] } n_l Q_{e^l_k e_{j_l}^l}+ \sum_{ j_{l+1} \in [n_{l+1}]} n_{l+1} Q_{e^l_k e_{j_{l+1}}^{l+1}} +\cdots + \sum_{ j_m \in [n_m] } n_m Q_{e^l_k e_{j_m}^m} \right]=n_l .$$ 
Therefore, Equation \eqref{nisokms2} implies
\begin{equation*}
	\left( \frac{1}{n_{l+1}^2}-\frac{1}{n_l^2} \right) \sum_{k \in [n_l]}\sum_{ j_{l+1} \in [n_{l+1}] } n_{l+1} Q_{e^l_k e_{j_{l+1}}^{l+1}}+ \cdots + \left( \frac{1}{n_m^2}-\frac{1}{n_l^2} \right) \sum_{k \in [n_l]}\sum_{ j_m \in [n_m] } n_m Q_{e^l_k e_{j_m}^m}=0
\end{equation*}  
Hence, for each ${k \in [n_l]}$, we have  $ Q_{e^l_k e_{j_{l+1}}^{l+1}}=...=Q_{e^l_k e_{j_m}^m}=0$ for all ${j_{i}} \in [n_i] $ with $i \in \{l+1,...,m\}.$ 
Therefore, $\mathcal{S}(l)$ is also true.\\  
Now, using the universal property of $*_{i=1}^m U_{n_i}^{+}$, one can conclude that $Q_{\oplus KMS}^{Lin}(\sqcup_{i=1}^{m} L_{n_i})$ is a quantum subgroup of $*_{i=1}^m U_{n_i}^{+}$ with respect to their standard fundamental representation. \\

\noindent  Conversely, we will show that the CMQG $*_{i=1}^{m} U_{n_i}^+$ (whose fundamental representation is given by $q=(q_{ef})_{n \times n}$, where $q$ is a unitary matrix and $q_{ef}=0$ iff $e$ and $f$ based on two distinct vertices) acts linearly, faithfully and $\oplus_{i=1}^{m} KMS_{ln(n_i)}$ preserving way on $C^*(\sqcup_{i=1}^{m} ~~ L_{n_i})$, where we denote $ \sqcup_{i=1}^{m} ~~ L_{n_i}=\{V, E, s, r\}$ with $|V|=m$ and $n:=|E|=\sum\limits_{i=1}^{m} n_i$. Also, we write $E_k:=\{e \in E : s(e)=r(e)=k \}$. Let us define a linear faithful action $\alpha$ by 
\begin{equation}
\alpha(S_e)= \sum\limits_{f \in E_i} S_f \otimes q_{ef} \text{ for all } e \in E_i \text{ and for each } i \in [m].  \label{actioneq}
\end{equation}
 We have to show that $\alpha$ preserves the state $\oplus_{i=1}^{m} KMS_{ln(n_i)} $.\\
 Let  $\gamma, \mu$ be finite paths in $ \sqcup_{i=1}^{m} ~~ L_{n_i}$. Hence, $\gamma, \mu$ are paths in $L_{n_k}$ and $L_{n_l}$ respectively, for some $k,l\in [m]$.\\
    If $k\neq l$, then $S_{\gamma}S_{\mu}^*=0$. Therefore,  $( \oplus_{i=1}^{m} KMS_{ln(n_i)} \otimes id_{C^*(\Gamma)} ) \circ \alpha (S_{\gamma}S_{\mu}^*) =0=  \oplus_{i=1}^{m} KMS_{ln(n_i)}(S_{\gamma}S_{\mu}^*).1 $ follows trivially.\\
	If $k=l$, then $S_{\gamma}S_{\mu}^* \neq 0$ (since $r(\gamma)=r(\mu)$). Let us assume $\gamma= \gamma_1 \gamma_2 ...\gamma_p$ and $\mu= \mu_1 \mu_2 ...\mu_r$.
	\begin{equation*}
	\alpha(S_{\gamma}S_{\mu}^*)=\sum_{i_1,...,i_p;j_1,..., j_r \in E_k} S_{i_1}...S_{i_p}S_{j_r}^*...S_{j_1}^* \otimes q_{i_1 \gamma_1}...q_{i_p \gamma_p} q_{j_r \mu_r}^*...q_{j_1 \mu_1}^*.
	\end{equation*}
	\underline{Case 1:} $p \neq r$.  $\oplus_{i=1}^{m} KMS_{ln(n_i)}(S_{\gamma}S_{\mu}^*)= \frac{1}{m} KMS_{ln(n_k)}(S_{\gamma}S_{\mu}^*)=0  $.\\
	On the other hand, $S_{i_1}...S_{i_p}S_{j_r}^*...S_{j_1}^* \neq 0$, since all the loops $i_1,...i_p, j_1,...j_r $ are based at a single vertex $k$. By definition,
	\begin{equation*}
	KMS_{ln(n_k)}(S_{i_1}...S_{i_p}S_{j_r}^*...S_{j_1}^*)=0.
	\end{equation*}
	Hence, 
	\begin{equation*}
	(\oplus_{i=1}^{m} KMS_{ln(n_i)} \otimes id_{C^*(\Gamma)} ) \circ \alpha (S_{\gamma}S_{\mu}^*) = 0= \oplus_{i=1}^{m} KMS_{ln(n_i)}(S_{\gamma}S_{\mu}^*).1. 
	\end{equation*}
	\underline{Case 2:}  $p=r$. In this case, 
	\begin{align}
	( \oplus_{i=1}^{m} KMS_{ln(n_i)} \otimes id_{C^*(\Gamma)} ) \circ \alpha (S_{\gamma}S_{\mu}^*) =&  \sum_{i_1,...,i_p\in E_k} \frac{1}{m.n_{k}^p} ~ q_{i_1 \gamma_1}...q_{i_p \gamma_p} q_{i_p \mu_p}^*...q_{i_1 \mu_1}^*   \nonumber \\
	=& \frac{1}{m.{n_k}^p}  \sum_{i_1,...,i_p\in E_k}  ~ q_{i_1 \gamma_1}...q_{i_p \gamma_p} q_{i_p \mu_p}^*...q_{i_1 \mu_1}^* \label{KMSsum}
	\end{align}
 Depending on the paths $\gamma, \mu$, we divide it into the following two sub-cases :\\
 
\noindent Case(A): $\gamma = \mu$. Then $\gamma_t= \mu_t$ for all $t \in [p]$.\\
	Since $\sum\limits_{i_t\in E_k}  ~ q_{i_t \gamma_t} q_{i_t \gamma_t}^*=1$ for each $t \in [p]$, $$ \sum_{i_1,...,i_p\in E_k}  ~ q_{i_1 \gamma_1}...q_{i_p \gamma_p} q_{i_p \gamma_p}^*...q_{i_1 \gamma_1}^*=1 .$$ 
	Therefore, from Equation \eqref{KMSsum},
	$ ( \oplus_{i=1}^{m} KMS_{ln(n_i)} \otimes id_{C^*(\Gamma)} ) \circ \alpha (S_{\gamma}S_{\gamma}^*) = \frac{1}{m.{n_k}^p}=\oplus_{i=1}^{m} KMS_{ln(n_i)}(S_{\gamma}S_{\gamma}^*).1$.\\
	
	\noindent Case(B): $\gamma \neq \mu$. So, let's assume that $\gamma_t=\mu_t$ for $t=p,p-1,...p-j$ but $\gamma_{p-j-1}=\mu_{p-j-1}$.
	Now, since $ \sum\limits_{i_t\in E_k}  ~ q_{i_t \gamma_t} q_{i_t \mu_t}^*=1$ for $t=p,p-1,...,p-j$ and $\sum\limits_{i_{p-j-1} \in E_k}  ~ q_{i_{p-j-1} \gamma_{p-j-1}} q_{i_{p-j-1} \mu_{p-j-1}}^*=0$, the sum on the RHS of Equation \eqref{KMSsum} is 0. Therefore,
	\begin{equation*}
	( \oplus_{i=1}^{m} KMS_{ln(n_i)} \otimes id_{C^*(\Gamma)} ) \circ \alpha (S_{\gamma}S_{\mu}^*) = 0 = \frac{1}{m} KMS_{ln(n_k)}(S_{\gamma}S_{\mu}^*).1=\oplus_{i=1}^{m} KMS_{ln(n_i)}(S_{\gamma}S_{\mu}^*).1.
	\end{equation*}
\end{proof}

\noindent Recall that $$\mathfrak{F}=(\{ \mathcal{W}_{i}, \mathcal{M}_{p,q}^{(1)}, \mathcal{M}_{r,s}^{(2)} : i \in \mathbb{N}_{0}, (p,q) \in \mathbb{N}_{0} \times \mathbb{N}, (r,s) \in \mathbb{N}_{0} \times \mathbb{N} \}, \oplus_{k=1}^{m} KMS_{ln(n_k)})$$ is an orthogonal filtration on $\oplus_{i=1}^{m} \mathcal{O}_{n_i}$ and $\mathfrak{F}_{i}$ denotes the orthogonal filtration on $\mathcal{O}_i \cong C^*(L_{n_i})$ with respect to the KMS state $KMS_{ln(n_i)}$ (for $i \in [m]$) as it was introduced in Section 5 of \cite{Mandalkms}.  \\
The same idea, as it was used in the proof of Theorem 5.4 in \cite{Mandalkms}, has been used to prove the following theorem (Theorem \ref{orthothm1}).\\

\noindent 	Let's denote $\sqcup_{i=1}^{m}~L_{n_i}=\{V, E, s, r\}$, where $|V|=m$. Before going to the proof, observe that since  $\mathcal{M}_{0,1}^{(1)}$ generates $C^*(\sqcup_{i=1}^{m}~L_{n_i})$ (as a $C^*$-algebra) and $\{\sqrt{m}S_e : e \in E\}$ is an orthonormal basis for $\mathcal{M}_{0,1}^{(1)}$, Theorem 2.10 [$(ii)$ and $(iii)$] of \cite{Ortho} ensures that $Q_{\mathfrak{F}}({C^*(\sqcup_{i=1}^{m}L_{n_i}))}$ is essentially a CMQG with the fundamental representation $q=(q_{ef})_{|E|\times |E|}$ such that $\alpha(S_e)=\sum\limits_{f \in E} S_f \otimes q_{fe}$.\\

\begin{thm} \label{orthothm1}
	Let $\{\mathcal{O}_{n_i}\}_{i=1}^{m}$ be a finite family of Cuntz algebras, where all $n_i$'s are distinct. Then $Q_{\mathfrak{F}}({C^*(\sqcup_{i=1}^{m}L_{n_i}))} \cong *_{i=1}^{m} U_{n_i}^+ \cong *_{i=1}^{m} Q_{\mathfrak{F}_i}({C^*(L_{n_i}))}$.
\end{thm}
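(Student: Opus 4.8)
The strategy is to realize $Q_{\mathfrak{F}}\bigl(C^*(\sqcup_{i=1}^{m} L_{n_i})\bigr)$ and $*_{i=1}^{m} U_{n_i}^+$ as quantum subgroups of one another with matching fundamental representations, which forces them to be identical; the last isomorphism in the statement then follows since $Q_{\mathfrak{F}_i}\bigl(C^*(L_{n_i})\bigr)\cong U_{n_i}^+$ by Section 5 (Theorem 5.4) of \cite{Mandalkms}. For one direction, I would first record that every object $((\mathcal{Q},\Delta),\alpha)$ of $\mathfrak{C}_{\mathfrak{F}}$ is automatically an object of $\mathfrak{C}_{\oplus KMS}^{Lin}$: linearity is immediate because $\mathcal{M}_{0,1}^{(1)}=\mathrm{span}\{S_e:e\in E\}$ is one of the filtration subspaces, so $\alpha(S_e)\in\mathcal{M}_{0,1}^{(1)}\otimes\mathcal{Q}$; and state preservation follows because the orthogonality axiom of the filtration (applied with $1\in F_0$) forces $\oplus_{k=1}^{m}KMS_{ln(n_k)}$ to annihilate $F_i$ for every $i\neq 0$, while $\alpha(F_0)\subseteq F_0\otimes\mathcal{Q}$, so applying $(\oplus_{k=1}^{m}KMS_{ln(n_k)}\otimes id)$ to $\alpha(F_i)\subseteq F_i\otimes\mathcal{Q}$ gives $(\oplus_{k=1}^{m}KMS_{ln(n_k)}\otimes id)\alpha(x)=\oplus_{k=1}^{m}KMS_{ln(n_k)}(x)\,1$ for $x$ in each $F_i$, hence on the dense span of all the $F_i$'s, hence everywhere. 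Applying this to the universal object $Q_{\mathfrak{F}}\bigl(C^*(\sqcup_{i=1}^{m} L_{n_i})\bigr)$ and invoking the universal property of $Q_{\oplus KMS}^{Lin}$ together with Theorem \ref{diskmsthm} yields a surjective CQG morphism $*_{i=1}^{m} U_{n_i}^+\cong Q_{\oplus KMS}^{Lin}(\sqcup_{i=1}^{m} L_{n_i})\twoheadrightarrow Q_{\mathfrak{F}}\bigl(C^*(\sqcup_{i=1}^{m} L_{n_i})\bigr)$ intertwining fundamental representations.

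For the reverse inclusion I would take the faithful linear action $\alpha$ of $*_{i=1}^{m} U_{n_i}^+$ on $C^*(\sqcup_{i=1}^{m} L_{n_i})$ constructed in the proof of Theorem \ref{diskmsthm} via equation \eqref{actioneq}, which is already known to preserve $\oplus_{k=1}^{m}KMS_{ln(n_k)}$, and check that it is $\mathfrak{F}$-preserving. The key structural inputs are that $\alpha$ is ``block-diagonal'' (for $e\in E_j$, $\alpha(S_e)$ is a combination of $S_f$ with $f\in E_j$) and that $S_\gamma S_\mu^*=S_\gamma S_\mu=S_\gamma^* S_\mu=0$ whenever $\gamma,\mu$ lie in distinct components $L_{n_j}$ (Proposition \ref{graphprop}). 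Hence, evaluating $\alpha$ on the spanning elements $S_\gamma S_\mu^*$ with $|\gamma|=|\mu|=i$, on $S_\mu x$ with $|\mu|=l,\ x\in\mathcal{W}_k$, and on $yS_\nu^*$ with $|\nu|=l,\ y\in\mathcal{W}_k$, the computation decouples across components and reduces, component by component, to exactly the single-Cuntz-algebra computation carried out in Theorem 5.4 of \cite{Mandalkms}, all cross-component contributions vanishing. This shows $\alpha(\mathcal{W}_i)\subseteq\mathcal{W}_i\otimes(*_j U_{n_j}^+)$, $\alpha(\mathcal{M}_{p,q}^{(1)})\subseteq\mathcal{M}_{p,q}^{(1)}\otimes(*_j U_{n_j}^+)$ and $\alpha(\mathcal{M}_{r,s}^{(2)})\subseteq\mathcal{M}_{r,s}^{(2)}\otimes(*_j U_{n_j}^+)$, so $(*_{i=1}^{m} U_{n_i}^+,\alpha)$ is an object of $\mathfrak{C}_{\mathfrak{F}}$, and the universal property of $Q_{\mathfrak{F}}\bigl(C^*(\sqcup_{i=1}^{m} L_{n_i})\bigr)$ produces the reverse surjection, again matching fundamental representations.

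Putting the two surjections together forces $Q_{\mathfrak{F}}\bigl(C^*(\sqcup_{i=1}^{m} L_{n_i})\bigr)\cong *_{i=1}^{m} U_{n_i}^+$, indeed identically in the sense of Definition \ref{identical}; combining with $Q_{\mathfrak{F}_i}\bigl(C^*(L_{n_i})\bigr)\cong U_{n_i}^+$ then gives $Q_{\mathfrak{F}}\bigl(C^*(\sqcup_{i=1}^{m} L_{n_i})\bigr)\cong *_{i=1}^{m} Q_{\mathfrak{F}_i}\bigl(C^*(L_{n_i})\bigr)$. I expect the only genuine obstacle to be the bookkeeping in the reverse inclusion, namely verifying that $\alpha$ preserves the ``mixed'' subspaces $\mathcal{M}_{k,l}^{(1)}$ and $\mathcal{M}_{k,l}^{(2)}$, where one must track precisely which products $S_{i_1}\cdots S_{i_p}S_{j_r}^*\cdots S_{j_1}^*$ are nonzero; this is routine once the vanishing of all cross-component products is invoked, exactly as in the single-Cuntz-algebra argument of \cite{Mandalkms}.
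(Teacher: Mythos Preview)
Your proposal is correct and follows essentially the same two-sided strategy as the paper: show that every object of $\mathfrak{C}_{\mathfrak{F}}$ lies in $\mathfrak{C}_{\oplus KMS}^{Lin}$ (giving the surjection $*_i U_{n_i}^+ \twoheadrightarrow Q_{\mathfrak{F}}$ via Theorem \ref{diskmsthm}), then verify that the block-diagonal action \eqref{actioneq} of $*_i U_{n_i}^+$ preserves the filtration (giving the reverse surjection). The only difference is in the reverse step: where you reduce component-by-component to the single-Cuntz computation of Theorem 5.4 in \cite{Mandalkms}, the paper argues more directly that $\alpha(\mathcal{F}_k)\subseteq\mathcal{F}_k\otimes Q$ and then invokes unitarity of the fundamental representation $q$ to conclude that $\alpha|_{\mathcal{F}_k}$ is a unitary corepresentation, hence automatically preserves the orthogonal complement $\mathcal{W}_k=\mathcal{F}_k\ominus\mathcal{F}_{k-1}$; preservation of $\mathcal{M}_{r,s}^{(1)}$ and $\mathcal{M}_{r,s}^{(2)}$ then follows by a one-line computation. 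The paper's route sidesteps the need to check that the global filtration subspaces decompose as direct sums of the component-wise ones, but both arguments are valid.
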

\begin{proof}
	Firstly, we claim that there exists a surjective $C^*$-homomorphism from $*_{i=1}^{m} U_{n_i}^+$ to $Q_{\mathfrak{F}}({C^*(\sqcup_{i=1}^{m}L_{n_i}))}$ that takes generators of $*_{i=1}^{m} U_{n_i}^+$ to generators of $Q_{\mathfrak{F}}({C^*(\sqcup_{i=1}^{m}L_{n_i}))}$. If  $((Q,\Delta),\alpha)$ is an object in the category $\mathfrak{C}_{\mathfrak{F}}({C^*(\sqcup_{i=1}^{m}L_{n_i}))}$, then by Equation (2.1) of \cite{Ortho}, $\alpha$ preserves the underlying state $\oplus_{k=1}^{m} KMS_{ln(n_k)}$. Moreover, $\alpha(\mathcal{M}_{0,1}^{(1)}) \subset \mathcal{M}_{0,1}^{(1)} \otimes Q $ implies that $\alpha$ is linear. Hence, $((Q,\Delta),\alpha)$ is an object of $\mathfrak{C}_{\oplus KMS}^{Lin}$ and the claim follows from Theorem \ref{diskmsthm}. \\
	
	\noindent For the converse, we have to show that the CMQG $(*_{i=1}^{m} U_{n_i}^+,q)$ (as it was described in Theorem \ref{diskmsthm}) acts on $C^*(\sqcup_{i=1}^{m}L_{n_i})$ by  $\alpha$ as in Equation \eqref{actioneq}, which preserves the orthogonal filtration $\mathfrak{F}=(\{ \mathcal{W}_{i}, \mathcal{M}_{p,q}^{(1)}, \mathcal{M}_{r,s}^{(2)} : i \in \mathbb{N}_{0}, (p,q) \in \mathbb{N}_{0} \times \mathbb{N}, (r,s) \in \mathbb{N}_{0} \times \mathbb{N} \}, \oplus_{k=1}^{m} KMS_{ln(n_k)})$. Using \eqref{actioneq}, we get 
	\begin{equation*}
	\alpha(S_{\gamma}S_{\mu}^*)=\sum_{i_1,...,i_k;j_1,..., j_k \in E_l} S_{i_1}...S_{i_k}S_{j_k}^*...S_{j_1}^* \otimes q_{i_1 \gamma_1}...q_{i_k \gamma_k} q_{j_k \mu_k}^*...q_{j_1 \mu_1}^*,
	\end{equation*}
	where $\gamma= \gamma_1 \gamma_2 ...\gamma_k$ and $\mu= \mu_1 \mu_2 ...\mu_k$ are paths in $L_{n_l}$. Therefore, $\alpha(\mathcal{F}_k) \subset \mathcal{F}_k \otimes Q $. Since the fundamental representation $q=(q_{ef})_{n \times n}$ is unitary, the action restricted on each $\mathcal{F}_{k}$, $\alpha|_{\mathcal{F}_{k}}$, is actually a unitary (co)representation on $\mathcal{F}_{k}$. Hence, $\alpha$ preserves $\mathcal{W}_k= \mathcal{F}_k \ominus \mathcal{F}_{k-1}$, the orthogonal complement of $\mathcal{F}_{k-1}$ in $\mathcal{F}_{k}$. \\
	Lastly, to show $\alpha(\mathcal{M}_{r,s}^{(1)}) \subset \mathcal{M}_{r,s}^{(1)} \otimes Q$, take a non-zero element $S_{\mu}x=S_{\mu_1}...S_{\mu_s}x \in \mathcal{M}_{r,s}^{(1)}$, where $x \in \mathcal{W}_r$ and $\mu$ is a path on $E(L_l)$. Since $\alpha(S_{\mu}x)=\left( \sum\limits_{i_1,...,i_s \in E_l} S_{i_1}...S_{i_s} \otimes q_{i_1 \mu_1}...q_{i_s \mu_s} \right) \left( \alpha(x) \right) $, where $\alpha(x) \in \mathcal{W}_r \otimes Q$, the claim follows. Also, the *-preserving property of $\alpha$ ensures that similar arguments hold for  $\alpha(\mathcal{M}_{r,s}^{(2)}) \subset \mathcal{M}_{r,s}^{(2)} \otimes Q$. Hence, there exists a surjective $C^*$-homomorphism from $Q_{\mathfrak{F}}({C^*(\sqcup_{i=1}^{m}L_{n_i}))}$ to $*_{i=1}^{m} U_{n_i}^+$ that takes generators to generators, which concludes the first isomorphism in the statement of the theorem.\\  
	The second isomorphism follows from Corollary 5.5 of \cite{Mandalkms}.
\end{proof}

\begin{obs} \label{orthothm1rem}
	We would just like to mention two key observations from the above proof that will help us to understand Theorem \ref{orthothm2} in the future. 
	\begin{enumerate}
		\item[(i)] Any object of the category $\mathfrak{C}_{\mathfrak{F}}(C^*(\sqcup_{i=1}^{m} L_{n_i}))$ is also an object of the category $\mathfrak{C}_{\oplus KMS}^{Lin}$ (regardless of whether $n_i$'s are distinct or not).
		\item[(ii)] In the second half of the above proof, to show the defined action $\alpha$ is filtration preserving, we have just used the fact that the fundamental matrix representation $q$ of the acting CMQG is actually a unitary matrix.   
	\end{enumerate}
\end{obs}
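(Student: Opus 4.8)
The statement has two parts, and the plan is to verify each as a short consequence of material already in place — part (i) from the Banica--Skalski formalism together with the concrete description of the filtration $\mathfrak{F}$, and part (ii) by auditing the second half of the proof of Theorem \ref{orthothm1} and recording exactly which hypothesis is invoked at each step.

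For part (i), I would take an arbitrary object $((Q,\Delta),\alpha)$ of $\mathfrak{C}_{\mathfrak{F}}(C^*(\sqcup_{i=1}^{m} L_{n_i}))$ and check the two conditions defining an object of $\mathfrak{C}_{\oplus KMS}^{Lin}$, namely that $\alpha$ is linear and that $\alpha$ preserves $\oplus_{k=1}^{m} KMS_{ln(n_k)}$. Linearity is immediate from the shape of the filtration: since $\mathcal{W}_0=\mathbb{C}1$ one has $\mathcal{M}_{0,1}^{(1)}=\mathrm{span}\{S_e : e\in E\}$, so the requirement $\alpha(\mathcal{M}_{0,1}^{(1)})\subseteq \mathcal{M}_{0,1}^{(1)}\otimes Q$ says exactly that $\alpha(S_e)=\sum_{f\in E} S_f\otimes q_{fe}$ for suitable $q_{fe}\in Q$. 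State preservation is the general principle behind orthogonal filtrations (Equation (2.1) of \cite{Ortho}): writing $\phi=\oplus_k KMS_{ln(n_k)}$, we have $\phi\equiv 0$ on each $F_i$ with $i\neq 0$ since $F_i$ is $\phi$-orthogonal to $F_0=\mathbb{C}1$; hence for $a\in F_i$, $i\neq 0$, writing $\alpha(a)=\sum_j b_j\otimes q_j$ with $b_j\in F_i$ gives $(\phi\otimes id)\alpha(a)=\sum_j\phi(b_j)q_j=0=\phi(a)1$, the case $a\in F_0$ is trivial, and the identity extends to all of $C^*(\sqcup_i L_{n_i})$ by density of $\mathrm{span}(\cup_i F_i)$. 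None of this used that the $n_i$ are pairwise distinct, which accounts for the parenthetical remark in (i).

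For part (ii), I would re-read the converse half of the proof of Theorem \ref{orthothm1} and track, step by step, which property of the fundamental representation $q=(q_{ef})_{n\times n}$ is needed. The inclusion $\alpha(\mathcal{F}_k)\subseteq \mathcal{F}_k\otimes Q$ comes only from applying the $*$-homomorphism $\alpha$ to $S_\gamma S_\mu^*$ and reading off the level, so it holds for any linear action; similarly, once $\alpha(\mathcal{W}_r)\subseteq \mathcal{W}_r\otimes Q$ is known, the inclusions $\alpha(\mathcal{M}_{r,s}^{(1)})\subseteq \mathcal{M}_{r,s}^{(1)}\otimes Q$ and $\alpha(\mathcal{M}_{r,s}^{(2)})\subseteq \mathcal{M}_{r,s}^{(2)}\otimes Q$ follow formally by multiplying $\alpha(S_\mu)$ (resp.\ $\alpha(S_\nu^*)$) into $\alpha(x)\in \mathcal{W}_r\otimes Q$. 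The single step where a real hypothesis enters is $\alpha(\mathcal{W}_k)\subseteq \mathcal{W}_k\otimes Q$: there one equips each $\mathcal{F}_k$ with the inner product $\langle x,y\rangle=\phi(x^*y)$, observes that unitarity of $q$ makes $\alpha|_{\mathcal{F}_k}$ a unitary corepresentation of $Q$ on this finite-dimensional Hilbert space, and uses the standard fact that the orthogonal complement of an invariant subspace of a finite-dimensional unitary corepresentation is again invariant, applied to $\mathcal{W}_k=\mathcal{F}_k\ominus\mathcal{F}_{k-1}$. Writing out this chain and noting that unitarity of $q$ is the only special property invoked proves (ii).

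I do not anticipate a genuine obstacle here; the argument is essentially bookkeeping. The two places that deserve a sentence of care are the precise deduction of state preservation from filtration preservation (one must invoke the vanishing of $\phi$ off $F_0$ and density of $\mathrm{span}(\cup_i F_i)$, i.e.\ Equation (2.1) of \cite{Ortho}) and the structural fact that finite-dimensional unitary corepresentations split orthogonally, which is what makes $\mathcal{W}_k$ invariant; beyond these, each claim is a one-line consequence of the formulas already displayed in the proof of Theorem \ref{orthothm1}.
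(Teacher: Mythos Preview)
Your proposal is correct and follows exactly the same route as the paper: for (i) you invoke Equation~(2.1) of \cite{Ortho} for state preservation and the inclusion $\alpha(\mathcal{M}_{0,1}^{(1)})\subseteq\mathcal{M}_{0,1}^{(1)}\otimes Q$ for linearity, and for (ii) you audit the converse half of the proof of Theorem~\ref{orthothm1} and isolate unitarity of $q$ as the sole hypothesis feeding into the invariance of $\mathcal{W}_k=\mathcal{F}_k\ominus\mathcal{F}_{k-1}$. If anything, your write-up is slightly more detailed than the paper's, which merely records the observation without a separate argument.
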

\section{Direct sum of isomorphic Cuntz algebras} 

\begin{thm} \label{thm2}
Let $\{O_{N}\}_{i=1}^{K}$ be a finite family of isomorphic Cuntz algebras (with $N$ generators) whose underlying graphs are $L_{N}$. Then $ Q_{\tau}^{Lin}(\sqcup_{i=1}^{K} L_{N} ) \cong Q_{\tau}^{Lin}(L_{N}) \wr_{*} S_{K}^+ \cong U_{N}^{+} \wr_{*} S_{K}^+ . $
\end{thm}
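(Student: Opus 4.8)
The plan is to prove the isomorphism by exhibiting mutually inverse surjective CQG morphisms between $Q_{\tau}^{Lin}(\sqcup_{i=1}^{K} L_N)$ and $U_N^+ \wr_{*} S_K^+$, following the two-sided strategy used for Theorems \ref{thm1} and \ref{diskmsthm}; the middle isomorphism $Q_{\tau}^{Lin}(L_N)\cong U_N^+$ is the computation already recorded from \cite{Mandal}. Write $v_1,\dots,v_K$ for the vertices and $E_j=\{e^j_1,\dots,e^j_N\}$ for the loops at $v_j$, so $E=\sqcup_j E_j$ has $NK$ elements, and let $\alpha(S_{e^j_l})=\sum_{i,k}S_{e^i_k}\otimes q_{(i,k)(j,l)}$ be the universal linear action. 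A first observation is that, for $\Gamma=\sqcup_{i=1}^{K} L_N$, the matrix $F^{\Gamma}$ equals $N\cdot I_{NK}$ (each $S_e^*S_e=p_{v_i}$ and $\tau(p_{v_i})=N$), so $A_{U^t}(F^{\Gamma})=U_{NK}^+$ and the fundamental representation $q$ of $Q_{\tau}^{Lin}(\Gamma)$ is biunitary, i.e. both $q$ and $\bar q$ are unitary.

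For the inclusion $Q_{\tau}^{Lin}(\Gamma)\subset U_N^+\wr_{*}S_K^+$, the central step is to recover a magic unitary from $q$. Applying $\alpha$ to the relations $S_{e^j_k}^*S_{e^j_l}=\delta_{kl}p_{v_j}$ and using $S_{e^i_a}^*S_{e^{i'}_b}=\delta_{ii'}\delta_{ab}p_{v_i}$ forces $\alpha(p_{v_j})=\sum_i p_{v_i}\otimes t_{ij}$, where $t_{ij}:=\sum_a q_{(i,a)(j,l)}^*q_{(i,a)(j,l)}$ is independent of $l$ (by the off-diagonal part of the same identity). Since $S_{e^j_l}=p_{v_j}S_{e^j_l}=S_{e^j_l}p_{v_j}$, multiplying $\alpha(p_{v_j})\alpha(S_{e^j_l})$ and $\alpha(S_{e^j_l})\alpha(p_{v_j})$ out and comparing with $\alpha(S_{e^j_l})$ yields $t_{ij}q_{(i,a)(j,l)}=q_{(i,a)(j,l)}t_{ij}=q_{(i,a)(j,l)}$, whence each $t_{ij}$ is a projection; biunitarity of $q$ gives $\sum_i t_{ij}=\sum_j t_{ij}=1$, so $t=(t_{ij})$ is a magic unitary. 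Setting $u^{(i)}_{al}:=\sum_j q_{(i,a)(j,l)}$ and using $t_{ij}t_{ij'}=\delta_{jj'}t_{ij}$ one gets the block decomposition $q_{(i,a)(j,l)}=t_{ij}u^{(i)}_{al}$ together with $t_{ij}u^{(i)}_{al}=u^{(i)}_{al}t_{ij}$; substituting this into the four (bi)unitarity relations of $q$ and using $t_{ij}t_{i'j}=\delta_{ii'}t_{ij}$ and $\sum_j t_{ij}=1$ shows each $u^{(i)}=(u^{(i)}_{al})$ is biunitary, hence a fundamental representation of a copy of $U_N^+$. The universal property of $U_N^+\wr_{*}S_K^+$ then produces a $*$-homomorphism $U_N^+\wr_{*}S_K^+\to Q_{\tau}^{Lin}(\Gamma)$; it is onto because the $q_{(i,a)(j,l)}=t_{ij}u^{(i)}_{al}$ generate, and it intertwines the coproducts by the computation dictated by the free wreath product coproduct formulas.

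For the reverse inclusion I would define a linear action of $U_N^+\wr_{*}S_K^+$ on $\bigoplus_{i=1}^{K}\mathcal{O}_N$ by $\alpha(S_{e^j_l})=\sum_{i,k}S_{e^i_k}\otimes t_{ij}u^{(i)}_{kl}$, where $t=(t_{ij})$ is the magic unitary of $S_K^+$ and $u^{(i)}=i_i(u)$ is the $i$-th embedded copy of the fundamental representation of $U_N^+$, together with $\alpha(p_{v_j})=\sum_i p_{v_i}\otimes t_{ij}$. Using the commutation $t_{ij}u^{(i)}_{kl}=u^{(i)}_{kl}t_{ij}$, the biunitarity of each $u^{(i)}$, and the magic-unitary relations for $t$, one checks in turn: $\alpha$ preserves the defining relations $S_e^*S_e=p_{r(e)}$ and $p_v=\sum_{s(f)=v}S_fS_f^*$ (in particular $\alpha(1)=1\otimes1$), so it extends to a well-defined unital $*$-homomorphism; $(\alpha\otimes id)\alpha=(id\otimes\Delta)\alpha$, from the free wreath product coproduct; $(\tau\otimes id)\alpha(S_{e^j_k}S_{e^j_l}^*)=\delta_{kl}1=\tau(S_{e^j_k}S_{e^j_l}^*)1$, using $\sum_a u^{(i)}_{ak}(u^{(i)}_{al})^*=\delta_{kl}$ and $\sum_i t_{ij}=1$; and faithfulness together with the Podle\'s condition, the former because every $t_{ij}u^{(i)}_{kl}$, and hence—via $\sum_l(t_{ij}u^{(i)}_{kl})(t_{ij}u^{(i)}_{kl})^*=t_{ij}$ and $\sum_j t_{ij}u^{(i)}_{kl}=u^{(i)}_{kl}$—every generator occurs among the $(\theta\otimes id)\alpha(\cdot)$. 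Thus $(U_N^+\wr_{*}S_K^+,\alpha)$ is an object of $\mathfrak{C}_{\tau}^{Lin}$, producing a surjection $Q_{\tau}^{Lin}(\Gamma)\to U_N^+\wr_{*}S_K^+$; evaluating on generators shows it is inverse to the morphism of the previous paragraph.

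I expect the main obstacle to be the first direction: recognising that $\alpha$ automatically permutes the vertex projections, pinning down the right formula $t_{ij}=\sum_a q_{(i,a)(j,l)}^*q_{(i,a)(j,l)}$, and then carrying the block decomposition $q_{(i,a)(j,l)}=t_{ij}u^{(i)}_{al}$ carefully enough to extract biunitarity of the $u^{(i)}$ and all commutation relations purely from the biunitarity of $q$ and the magic-unitary identities for $t$. The converse direction is longer but is a direct verification once the free wreath product presentation is in hand. As with Theorem \ref{thm1}, the argument in fact gives the stronger conclusion that $Q_{\tau}^{Lin}(\sqcup_{i=1}^{K} L_N)\approx U_N^+\wr_{*}S_K^+$ in the identical sense, with respect to the standard fundamental representations.
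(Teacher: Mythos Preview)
Your proposal is correct and follows essentially the same two-sided strategy as the paper: extract from the biunitary fundamental representation $q$ a magic unitary $t=(t_{ij})$ and biunitary blocks $u^{(i)}$ satisfying $q_{(i,a)(j,l)}=t_{ij}u^{(i)}_{al}$ with $[t_{ij},u^{(i)}_{al}]=0$, giving the map $U_N^+\wr_{*}S_K^+\to Q_{\tau}^{Lin}(\Gamma)$, and conversely verify that the formula $\alpha(S_{e^j_l})=\sum_{i,k}S_{e^i_k}\otimes t_{ij}u^{(i)}_{kl}$ defines an object of $\mathfrak{C}_{\tau}^{Lin}$. The only organizational difference is that you derive the magic unitary chiefly from the induced action on vertex projections, whereas the paper works more directly with the relations $S_{e^b_j}S_{e^{b'}_l}=0$ for $b\neq b'$ to obtain the key vanishing $q_{ij}^{ab}q_{kl}^{ab'}=0$; one small slip is that $\sum_j t_{ij}=1$ does not follow from biunitarity of $q$ alone but from $\sum_j p_{v_j}=1$ (equivalently, from the dual expression $t_{ij}=\sum_l q_{(i,a)(j,l)}q_{(i,a)(j,l)}^*$ obtained via $p_{v_j}=\sum_l S_{e^j_l}S_{e^j_l}^*$).
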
 

\begin{center}
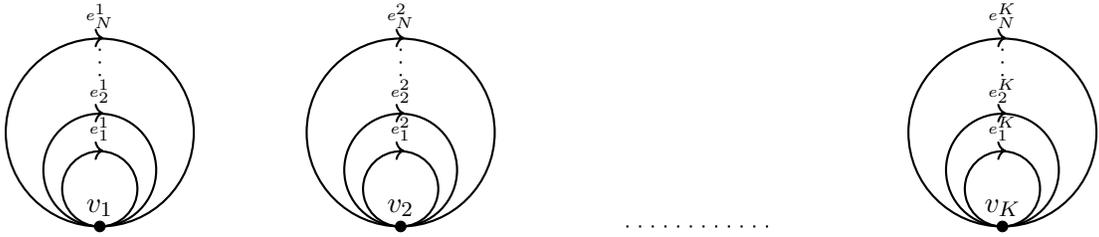
\begin{figure}[htpb]
\begin{tikzpicture}
\draw[fill=black] (0,0) circle (2pt) node[anchor=south]{${1}$};
\draw[black,thick](0,0.5) circle (0.5) node[above=0.25]{\rmidarrow}node[above=0.5]{${\scriptscriptstyle{e_1^1}}$};,
\draw[black,thick](0,0.75) circle (0.75) node[above=0.5]{\rmidarrow} node[above=0.75]{${\scriptscriptstyle{e_2^1}}$};
\draw[black,thick](0,1.25) circle (1.25) node[above=1]{\rmidarrow}node[above=1.25]{${\scriptscriptstyle{e_N^1}}$};
\draw[loosely dotted, black,thick] (0,2)--(0,2.5);

\draw[fill=black] (4,0) circle (2pt) node[anchor=south]{${2}$};
\draw[black,thick](4,0.5) circle (0.5) node[above=0.25]{\rmidarrow}node[above=0.5]{${\scriptscriptstyle{e_1^2}}$};
\draw[black,thick](4,0.75) circle (0.75) node[above=0.5]{\rmidarrow} node[above=0.75]{${\scriptscriptstyle{e_2^2}}$};
\draw[black,thick](4,1.25) circle (1.25) node[above=1]{\rmidarrow}node[above=1.25]{${\scriptscriptstyle{e_N^2}}$};
\draw[loosely dotted, black,thick] (4,2)--(4,2.5);

\draw[fill=black] (12,0) circle (2pt) node[anchor=south]{${K}$};
\draw[black,thick](12,0.5) circle (0.5) node[above=0.25]{\rmidarrow}node[above=0.5]{${\scriptscriptstyle{e_1^K}}$};
\draw[black,thick](12,0.75) circle (0.75) node[above=0.5]{\rmidarrow} node[above=0.75]{${\scriptscriptstyle{e_2^K}}$};
\draw[black,thick](12,1.25) circle (1.25) node[above=1]{\rmidarrow}node[above=1.25]{${\scriptscriptstyle{e_N^K}}$};
\draw[loosely dotted, black,thick] (12,2)--(12,2.5);

\draw[loosely dotted, black,thick] (7,0)--(9,0);
\end{tikzpicture}
\caption{$\sqcup_{i=1}^{K} L_{N}$}    \label{KL_N}
\end{figure}
\end{center}

\begin{proof}
Let us denote $V':=V(\sqcup_{i=1}^{K} L_{N})$ and $E':=E(\sqcup_{i=1}^{K} L_{N})$ (see Figure \ref{KL_N}).
Let the $i^{th}$ loop  based at $a \in V(\sqcup_{i=1}^{K} O_{N})=:V' $ be denoted as $ e^{a}_{i}$ (for $ i \in [N] $).
The action $\alpha$ can be given by 
$$ \alpha(S_{e^b_j})= \sum_{a \in [K], i \in [N]} S_{e^a_i} \otimes q_{e^a_i e^b_j} $$ for $ b \in [K] $ and $ j \in [N]$.  \\
For our convenience, we denote $q_{e^a_i e^b_j}$ by $ q_{ij}^{ab} $. Therefore, the above equation can be written as 
$$ \alpha(S_{e^b_j})= \sum_{a \in [K], i \in [N]} S_{e^a_i} \otimes q_{ij}^{ab}. $$

 With respect to the ordering of the edges by $$ \{e_1^1, e_2^1,...e_N^1 ; e_1^2,e_2^2,...e_N^2; ... ; e_1^K, e_2^K, ..., e_N^K\} ,$$  the fundamental matrix formed by the generators of  $ Q_{\tau}^{Lin}(\sqcup_{i=1}^{K} L_{N} )$ is given by $ U:=[(U_{ab})]_{NK \times NK} $, where each $U_{ab}:=(q_{ij}^{ab})_{i,j \in [N]}$ is a $ N \times N $ block matrix.\\
 
\noindent We divide the proof into two parts. In the first part, we derive the relations among the generators of the quantum symmetry group $Q_{\tau}^{Lin}(\sqcup_{i=1}^{K} L_{N} )$. In the second part, we use these relations to identify it with the compact quantum group $U_N^+ \wr_{*} S_K^+$.\\

\noindent $\blacktriangleright$ {\bf Finding the relations among the generators of the quantum symmetry group $ Q_{\tau}^{Lin}(\sqcup_{i=1}^{K} L_{N} )$:}

\noindent To determine the quantum symmetry group $ Q_{\tau}^{Lin}(\sqcup_{i=1}^{K} L_{N} )$, we derive some relations among its generators $ \{q_{ij}^{ab} : i,j \in [N], a,b \in [K]\} $ arising from the action $\alpha$ described above.\\

$ \bullet $ Firstly, since $ F^{\sqcup_{i=1}^{K} L_{N}} = N I_{NK \times NK}$, one can easily observe that $U$ and $U^t$ are both unitary matrices.  Therefore,
\begin{eqnarray}
\sum_{y \in [K],s \in [N]} q_{rs}^{xy} {q_{ts}^{zy}}^* =\delta_{(x,r)(z,t)} \label{u1},\\
\sum_{y \in [K],s \in [N]} {q_{sr}^{yx}}^* {q_{st}^{yz}} =\delta_{(x,r)(z,t)} \label{u2},\\
\sum_{y \in [K],s \in [N]} q_{sr}^{yx} {q_{st}^{yz}}^* =\delta_{(x,r)(z,t)} \label{u3}, \\
\sum_{y \in [K],s \in [N]} {q_{rs}^{xy}}^* {q_{ts}^{zy}} =\delta_{(x,r)(z,t)} \label{u4}.
\end{eqnarray}

$ \bullet $ Next, we will show that for each $a,b \in V'$,
\begin{eqnarray}
\sum_{i \in [N]} {q_{ij}^{ab}}^* {q_{ij}^{ab}}=\sum_{l \in [N]} {q_{kl}^{ab}}{q_{kl}^{ab}}^* \text{ for any }  j,k \in [N],    \label{qs2} \\
\sum_{i \in [N]} {q_{ji}^{ba}}^* {q_{ji}^{ba}}=\sum_{l \in [N]} {q_{lk}^{ba}}{q_{lk}^{ba}}^* \text{ for any }  j,k \in [N].    \label{qs2'}
\end{eqnarray}

We denote the above term in \eqref{qs2} by $ c_{ab} $. Therefore, for all $a,b \in V' $,
\begin{equation}
c_{ab}:= \sum_{i \in [N]} {q_{i1}^{ab}}^* {q_{i1}^{ab}}=\sum_{i \in [N]} {q_{i2}^{ab}}^* {q_{i2}^{ab}}=...=\sum_{i \in [N]} {q_{iN}^{ab}}^* {q_{iN}^{ab}}= \sum_{l \in [N]} {q_{1l}^{ab}}{q_{1l}^{ab}}^*=\sum_{l \in [N]} {q_{2l}^{ab}}{q_{2l}^{ab}}^*=...=\sum_{l \in [N]} {q_{Nl}^{ab}}{q_{Nl}^{ab}}^*.  \label{qs2.1}
\end{equation}    
For fixed $a,b \in V' $,
\begin{equation*}
S_{e}^*S_{e}= \sum_{f: s(f)=b} S_fS_f^* \hspace{0.5cm} {\forall e \in E' \text{ such that } r(e)=b }.
\end{equation*}
Applying the action $\alpha $ to both sides of the above equation, we get
\begin{align*}
\sum_{g \in E'} S_g^*S_g \otimes q_{ge}^*q_{ge}=\sum_{g,h \in E'} S_g S_h^* \otimes \left( \sum_{f: s(f)=b} q_{gf}q_{hf}^* \right)  ~~~~ { \forall e \in E' \text{ such that } r(e)=b } \\
\Rightarrow \sum_{g \in E'} p_{r(g)} \otimes q_{ge}^*q_{ge}=\sum_{g,h \in E'} S_g S_h^* \otimes \left( \sum_{f: s(f)=b} q_{gf}q_{hf}^* \right)  ~~~~ { \forall e \in E' \text{ such that } r(e)=b }.
\end{align*} 
Take any loop $g' \in E'$ with $r{(g')}=s(g')=a$. Now, multiply the above equation by $S_{g'}^* \otimes 1$ from the left and by $S_{g'} \otimes 1 $ from the right, we get
\begin{align*}
&\sum_{g \in E'} S_{g'}^*p_{r(g)}S_{g'} \otimes q_{ge}^*q_{ge}=\sum_{g,h \in E'}  S_{g'}^* S_g S_h^* S_{g'} \otimes \left( \sum_{f: s(f)=b} q_{gf}q_{hf}^* \right)  ~~~~ { \forall e \in E' \text{ such that } r(e)=b } \\
\Rightarrow & \sum_{g: r(g)=s(g)=a} p_a \otimes q_{ge}^*q_{ge} = S_{g'}^* S_{g'} S_{g'}^* S_{g'} \otimes \left( \sum_{f: s(f)=b} q_{g'f}q_{g'f}^* \right)  ~~~~ { \forall e \in E' \text{ such that } r(e)=b }\\
\Rightarrow &~~  p_a \otimes \left( \sum_{g: r(g)=s(g)=a} q_{ge}^*q_{ge} \right)  = p_a \otimes \left( \sum_{f: s(f)=b} q_{g'f}q_{g'f}^* \right)  ~~~~ { \forall e \in E' \text{ such that } r(e)=b }.
\end{align*}
Therefore, for every $ a,b \in V' $,
\begin{equation*}
\left( \sum_{g: r(g)=s(g)=a} q_{ge}^*q_{ge} \right)= \left( \sum_{f: s(f)=b} q_{g'f}q_{g'f}^* \right)  ~~~~ { \forall e,g' \in E' \text{ such that } r(e)=b , r(g')=s(g')=a }.
\end{equation*}
Hence, following our notations, we have 
\begin{equation*}
\sum_{i \in [N]} {q_{e_i^a e_j^b}}^*{q_{e_i^a e_j^b}}= \sum_{l \in [N]} q_{e_k^a e_l^b} { q_{e_k^a e_l^b}}^*
\end{equation*} and Equation \eqref{qs2} follows from the convention $ q_{e_i^a e_j^b}:=q_{ij}^{ab} $ . Applying antipode $\kappa$ to both sides of Equation \eqref{qs2}, we get \eqref{qs2'}. \\

$ \bullet $ Moreover, we claim that for each $ a,b,b' \in V' $ with $ b \neq b' $,
\begin{eqnarray}
q_{ij}^{ab} q_{kl}^{ab'}=0 \hspace{0.5cm} \forall i,j,k,l \in [N] \label{qs3},\\
q_{ji}^{ba} q_{lk}^{b'a}=0  \hspace{0.5cm} \forall i,j,k,l \in [N] \label{qs3'}.
\end{eqnarray}
 Clearly, \eqref{qs3'} follows from \eqref{qs3} simply by taking the antipode $\kappa$ on Equation \eqref{qs3}.
 To show \eqref{qs3}, observe that $S_{e_j^b}S_{e_l^{b'}}=0 $ for all $j,l \in [N]$ and $ b \neq b' $. Applying the action $ \alpha $, we get 
 \begin{equation*}
 \sum_{g,h \in E'} S_g S_h \otimes q_{ge_j^b}q_{he_l^{b'}}=0.
 \end{equation*}
 Since $\{S_g S_h: g, h \in E' \text{ with } r(g)=s(h)\}$ is a linearly independent set (by Lemma 2.3 of \cite{unitaryeasygraph});
 \begin{align*}
  &~~ q_{ge_j^b}q_{he_l^{b'}}=0 ~~~~ \forall g,h \in E' \text{ such that } r(g)=s(h)\in [K] \\
 \Rightarrow  &~~q_{e_i^a e_j^b}q_{e_k^a e_l^{b'}}=0 ~~~~ \forall i,j,k,l, \in [N] \text{ and } a,b,b'\in [K] \text{ but } b \neq b',
 \end{align*}
 i.e., $ q_{ij}^{ab} q_{kl}^{ab'}=0 \hspace{0.3cm} \forall i,j,k,l \in [N] \text{ and } a,b,b'\in [K] \text{ but } b \neq b'$.\\
 
 \noindent Using the relations above, namely Equations $\eqref{u1} - \eqref{qs2'}$, \eqref{qs3} and \eqref{qs3'}, we will derive some more relations among $ \{q_{ij}^{ab}\} $.
 
$ \bullet $ For $a,b,b' \in V'$ with $b \neq b'$ and $i,j,k,l \in [N]$,
 \begin{eqnarray}
 {q_{ij}^{ab}}^* q_{kl}^{ab'}=0   \label{qs4}\\
 {q_{ji}^{ba}}^* q_{lk}^{b'a}=0    \label{qs4'}\\
 q_{ij}^{ab} {q_{kl}^{ab'}}^*=0    \label{qs5}\\
 q_{ji}^{ba} {q_{lk}^{b'a}}^*=0    \label{qs5'}
 \end{eqnarray}

\noindent We will show only Equations \eqref{qs4} and \eqref{qs5}. A similar proof works for Equations \eqref{qs4'} and \eqref{qs5'} respectively. To show Equation \eqref{qs4}, observe that
 \begin{align*}
 {q_{ij}^{ab}}^* q_{kl}^{ab'} = & {q_{ij}^{ab}}^*\left( \sum_{y \in [K],s \in [N]} {q_{is}^{ay}}^* {q_{is}^{ay}}  \right) q_{kl}^{ab'}   ~~~~~~~ [ \text{using } \eqref{u4}]\\
 = &  \sum_{y \in [K],s \in [N]} {\left( q_{is}^{ay} q_{ij}^{ab} \right)}^* \left(q_{is}^{ay} q_{kl}^{ab'} \right) \\
  = &  \sum_{s \in [N]} {\left( q_{is}^{ab} q_{ij}^{ab} \right)}^* \left(q_{is}^{ab} q_{kl}^{ab'} \right) =0  ~~~~~~~ [ \text{using } \eqref{qs3}].
 \end{align*}
Similarly, for Equation \eqref{qs5}, we have
\begin{align*}
q_{ij}^{ab} {q_{kl}^{ab'}}^*=& ~~ q_{ij}^{ab} \left( \sum_{y \in [K],s \in [N]} {q_{is}^{ay}} {q_{is}^{ay}}^* \right)  {q_{kl}^{ab'}}^*     ~~~~~~~ [ \text{using } \eqref{u1}] \\
=& \sum_{y \in [K],s \in [N]} \left( q_{ij}^{ab} {q_{is}^{ay}} \right) {\left( {q_{kl}^{ab'}} {q_{is}^{ay}} \right)}^*  \\
=&  \sum_{s \in [N]} \left( q_{ij}^{ab} {q_{is}^{ab}} \right) {\left( {q_{kl}^{ab'}} {q_{is}^{ab}} \right)}^* =0     ~~~~~~~ [ \text{using } \eqref{qs3}].
\end{align*} 
 
 $\bullet$ For each $a,b \in V'$, $(c_{ab})_{K \times K}$ is a magic unitary matrix such  that 
 \begin{equation}
 q_{ij}^{ab} c_{ab}= c_{ab} q_{ij}^{ab} = q_{ij}^{ab} ~~~~ \forall a,b \in [K] \text{ and } i,j \in [N]      \label{proj}
 \end{equation}

Observe that for $a,a' \in [K]$ with $a \neq a'$,
\begin{align}
c_{ab}c_{a'b}=& \left( \sum_{i \in [N]} {q_{i1}^{ab}}^*{q_{i1}^{ab}}\right)  \left( \sum_{i' \in [N]} {q_{i'1}^{a'b}}^*{q_{i'1}^{a'b}}\right) \nonumber \\
=&   \sum_{i,i' \in [N]} {q_{i1}^{ab}}^*{q_{i1}^{ab}} {q_{i'1}^{a'b}}^*{q_{i'1}^{a'b}} =0 ~~~~[\text{by } \eqref{qs5'}]  \label{ortho}
\end{align}

Since $\sum\limits_{x \in [K]} c_{xb}=1$ (using \eqref{u2}), multiplying both sides by $c_{ab}$, we have
$\sum\limits_{x \in [K]} c_{ab}c_{xb}=c_{ab} \Rightarrow c_{ab}^2=c_{ab} $~~~~[by \eqref{ortho}]. Moreover, clearly $c_{ab}^* =c_{ab}$ . Hence, each $c_{ab}$ is a projection. \\

Since $\sum\limits_{x \in [K]} c_{xb}=1$ (using \eqref{u2}), multiplying $q_{ij}^{ab}$ from the left, we have 
\begin{align*}
&\sum_{x \in [K], i' \in [N]} q_{ij}^{ab} {q_{i'1}^{xb}}^* q_{i'1}^{xb}= q_{ij}^{ab}
\Rightarrow  ~~\sum_{ i' \in [N]} q_{ij}^{ab} {q_{i'1}^{ab}}^* q_{i'1}^{ab}= q_{ij}^{ab} \hspace{0.5cm}[\text{by \eqref{qs5'}}]\\
\Rightarrow & ~~q_{ij}^{ab} \left( \sum_{ i' \in [N]}  {q_{i'1}^{ab}}^* q_{i'1}^{ab} \right) = q_{ij}^{ab} 
\Rightarrow  ~~q_{ij}^{ab} c_{ab} = q_{ij}^{ab}.
\end{align*}
Again, multiplying $q_{ij}^{ab}$ from the right, one can get
\begin{align*}
&\sum_{x \in [K], i' \in [N]}  {q_{i'1}^{xb}}^* q_{i'1}^{xb}q_{ij}^{ab}= q_{ij}^{ab}
\Rightarrow  ~~\sum_{ i' \in [N]}  {q_{i'1}^{ab}}^* q_{i'1}^{ab}q_{ij}^{ab}= q_{ij}^{ab} \hspace{0.5cm}[\text{by \eqref{qs3'}}]\\
\Rightarrow & ~~\left( \sum_{ i' \in [N]}  {q_{i'1}^{ab}}^* q_{i'1}^{ab} \right)q_{ij}^{ab}  = q_{ij}^{ab} 
\Rightarrow  ~~ c_{ab} q_{ij}^{ab}  = q_{ij}^{ab}.
\end{align*}
Hence, \eqref{proj} holds.\\

\noindent Therefore, the generators $\{q_{ij}^{ab} : a,b \in [K] \text{ and } i,j \in [N]\}$ of the quantum symmetry group $Q_{\tau}^{Lin}(\sqcup_{i=1}^{K} L_N)$ satisfy the relations specified in Equations $\eqref{u1} - \eqref{qs2'}$, $\eqref{qs3}- \eqref{qs3'}$, $\eqref{qs4} - \eqref{qs5'}$ and $\eqref{proj}$. Moreover, the coproduct on these generators is given by $\Delta_1(q_{ij}^{ab})=\sum\limits_{c \in [K], k \in [N]} q_{ik}^{ac} \otimes q_{kj}^{cb}$.\\[0.6cm]

\noindent $\blacktriangleright$ {\bf Finding the CQG isomorphism between $Q_{\tau}^{Lin}(\sqcup_{i=1}^{K} L_N)$ and $U_N^+ \wr_{*} S_K^+$ :}

\noindent We will now identify the quantum symmetry group $Q_{\tau}^{Lin}(\sqcup_{i=1}^{K} L_N)$ with the compact quantum group $ (U_N^+ \wr_{*} S_K^+, \Delta_{2} )$, which is described as follows.\\
$U_N^+ \wr_{*} S_K^+$ is defined as the universal $C^*$-algebra generated by the entries of the matrices $ U^{(1)}=(u_{ij}^{1})_{N \times N}, U^{(2)}=(u_{ij}^{2})_{N \times N}, ... , U^{(K)}=(u_{ij}^{K})_{N \times N}, T=(t_{ab})_{K \times K}$ such that
 
\begin{itemize}
\item[(a)] for each $p\in [K]$,  both $U^{(p)}$ and  ${U^{(p)}}^t$ are unitary matrices,
\item[(b)] $T$ is a magic unitary matrix,
\item[(c)] $u_{ij}^{a} t_{ab}= t_{ab} u_{ij}^{a} ~~~~\forall a,b \in  [K], i,j \in [N]$.
\end{itemize}
Moreover, the coproduct $\Delta_2$ is given on generators by $\Delta_2(u_{ij}^a)=\sum\limits_{c'\in [K],k \in [N] } (u_{ik}^a \otimes u_{kj}^{c'})(t_{ac'} \otimes 1)$ and $\Delta_2(t_{ab})=\sum\limits_{c \in [K]} t_{ac} \otimes t_{cb}$.\\

Define $\phi : Q_{\tau}^{Lin}(\sqcup_{i=1}^{K} L_N) \to  U_N^+ \wr_{*} S_K^+ $ on generators by $\phi(q_{ij}^{ab})= u_{ij}^{a}t_{ab} =: w_{ij}^{ab}. $ We will prove that $\phi$ is a CQG isomorphism in three steps.\\

\noindent {\bf STEP-1:} We first show that $\phi$ is a surjective *-homomorphism from $Q_{\tau}^{Lin}(\sqcup_{i=1}^{K} L_N)$ to $ U_N^+ \wr_{*} S_K^+ $. To do this, we construct a linear, $\tau$-preserving action $\alpha$ such that $(U_N^+ \wr_{*} S_K^+, \Delta_{2})$ acts faithfully on $C^*(\sqcup_{i=1}^{K} L_N)$; in other words, $((U_N^+ \wr_{*} S_K^+, \Delta_2), \alpha)$ is an object of the category $\mathfrak{C}_{\tau}^{Lin}$.\\

\noindent For our convenience, we derive several relations among $\{w_{ij}^{ab}\}$ and $\{t_{ab}\}$, which will later be used to show that $\alpha$ is a well-defined $\tau$-preserving action.\\
For all $x,z \in [K]$ and $i,j \in [N]$, we have the following relations:
\begin{align}
\sum_{y \in [K], s \in [N]} w_{is}^{xy} {w_{js}^{zy}}^*=& \sum_{y \in [K], s \in [N]} u_{is}^{x}t_{xy}\left( u_{js}^{z} t_{zy} \right)^* = \sum_{y \in [K], s \in [N]} u_{is}^{x}t_{xy}t_{zy}{u_{js}^{z}}^*  \nonumber \\
=& \sum_{ s \in [N]} u_{is}^{x}\left( \sum_{y \in [K]}t_{xy}t_{zy}\right) {u_{js}^{z}}^*=\sum_{ s \in [N]} u_{is}^{x}( \delta_{xz}) {u_{js}^{z}}^* ~~[\text{by (b)}] \nonumber \\
=& ~\delta_{xz} \sum_{ s \in [N]} u_{is}^{x} {u_{js}^{z}}^*=
\begin{cases}
\sum\limits_{ s \in [N]} u_{is}^{x} {u_{js}^{x}}^*=\delta_{ij} & if ~~ x=z\\
0 & otherwise  \\
\end{cases} ~~[\text{by (a)}] \nonumber \\
=& \begin{cases}
1 & if ~~ x=z, i=j\\
0 & otherwise
\end{cases}= \delta_{(x,i)(z,j)};      \label{D}
\end{align}

\begin{align}
\sum_{y \in [K], s \in [N]} w_{si}^{yx} {w_{sj}^{yz}}^*= & \sum_{y \in [K], s \in [N]}u_{si}^{y}t_{yx}t_{yz}{u_{sj}^{y}}^*=\sum_{y \in [K], s \in [N]}t_{yx}t_{yz}u_{si}^{y}{u_{sj}^{y}}^* ~~[\text{by (c)}]  \nonumber \\
=& \sum_{y \in [K]} t_{yx}t_{yz}\left( \sum_{s \in [N]} u_{si}^{y}{u_{sj}^{y}}^*\right) = \sum_{y \in [K]} t_{yx}t_{yz} (\delta_{ij}) ~~[\text{by (a)}]  \nonumber \\
=& ~~ \delta_{ij}\delta_{xz} ~~[\text{by (b)}] \nonumber \\
 =& ~~\delta_{(x,i)(z,j)}.   \label{E}
\end{align}

\noindent Similarly, one can show that
\begin{equation}
\sum_{y \in [K], s \in [N]} {w_{si}^{yx}}^* {w_{sj}^{yz}}= \delta_{(x,i)(z,j)} ~~ \text{ and } ~~
\sum_{y \in [K], s \in [N]} {w_{is}^{xy}}^* {w_{js}^{zy}}= \delta_{(x,i)(z,j)}.   \label{F}
\end{equation}

\noindent Furthermore, for all $a,b \in [K]$ and $i,j \in [N]$, we have
\begin{align}
\sum_{l \in[N]}{w_{li}^{ab}}^*{w_{lj}^{ab}}  = \sum_{l \in[N]} \left( {u_{li}^{a}t_{ab}}\right) ^*{u_{lj}^{a}t_{ab}}= \sum_{l \in[N]}t_{ab}{u_{li}^{a}}^*{u_{lj}^{a}}t_{ab} = & t_{ab}\left( \sum_{l \in[N]}{u_{li}^{a}}^*{u_{lj}^{a}}\right) t_{ab}   \nonumber\\
= & \delta_{ij} t_{ab} ~~\text{ [by (a) and (b)]},    \label{A}
\end{align}
and 
\begin{align}
\sum_{l \in[N]}{w_{il}^{ab}}{w_{jl}^{ab}}^* =& \sum_{l \in [N]} u_{il}^{a}t_{ab}{u_{jl}^{a}}^* ~~[\text{by (b)}]  \nonumber\\
 =& \sum_{l \in [N]} t_{ab} u_{il}^{a}{u_{jl}^{a}}^* ~~[\text{by (c)}]   \nonumber\\
 =& ~~ t_{ab} \sum_{l \in [N]}  u_{il}^{a}{u_{jl}^{a}}^*= \delta_{ij} t_{ab} ~~[\text{by (a)}].    \label{B}
\end{align}

\vspace{0.5cm}





We now define an action $\alpha: C^{*}(\sqcup_{i=1}^{K}L_N) \to C^{*}(\sqcup_{i=1}^{K}L_N) \otimes (U_{N}^+ \wr_{*} S_{K}^+)$ on generators by 
\begin{align*}
\alpha(S_{e_{j}^{b}}) := & \sum\limits_{a \in [K], i \in [N]} S_{e_{i}^{a}} \otimes w^{ab}_{ij},\\
\alpha(p_b):= & \sum\limits_{a \in [K]} p_{a} \otimes t_{ab}
\end{align*}
for all $b \in [K]$ and $j \in [N]$. To show the existence of $\alpha$, we verify that the elements of $\{\alpha(S_{e_{j}^{b}}), \alpha(p_c) : j \in [N] \text{ and } b,c \in [K]\}$ satisfy the defining relations (i) and (ii) of Definition \ref{Def_Graph C* algebra} for $C^{*}(\sqcup_{i=1}^{K}L_N)$, as follows:
\begin{align*}
\alpha(S_{e_{j}^{b}})^{*} \alpha(S_{e_{j}^{b}})= \sum\limits_{\substack{a,c \in [K],\\ i,k \in [N]}} S_{e_{i}^{a}}^{*} S_{e_{k}^{c}} \otimes {w_{ij}^{ab}}^* w_{kj}^{cb} = & \sum\limits_{a \in [K], i \in [N]} S_{e_{i}^{a}}^{*} S_{e_{i}^{a}} \otimes {w_{ij}^{ab}}^* w_{ij}^{ab} \text{ [by Proposition \ref{graphprop} (i)]}\\
 =& \sum\limits_{a \in [K]} p_{a} \otimes \left( \sum\limits_{i \in [N]} {w_{ij}^{ab}}^* w_{ij}^{ab} \right) = \sum\limits_{a \in [K]} p_{a} \otimes t_{ab} ~ ~ \text{ [by \eqref{A}] }\\
  = & ~~ \alpha(p_b);
\end{align*}
\begin{align*}
 \sum\limits_{j \in [N]} \alpha(S_{e_{j}^{b}}) \alpha(S_{e_{j}^{b}})^{*} = \sum\limits_{j \in [N]} ~~ \sum\limits_{\substack{a,c \in [K],\\ i,k \in [N]}} S_{e_{i}^{a}} S_{e_{k}^{c}}^{*} \otimes w_{ij}^{ab} {w_{kj}^{cb}}^* = & \sum\limits_{j \in [N]} ~~ \sum\limits_{\substack{a\in [K],\\ i,k \in [N]}} S_{e_{i}^{a}} S_{e_{k}^{a}}^{*} \otimes w_{ij}^{ab} {w_{kj}^{ab}}^* \text{ [by Proposition \ref{graphprop} (iv)]}\\
  = & \sum\limits_{\substack{a \in [K],\\ i,k \in [N]}} S_{e_{i}^{a}} S_{e_{k}^{a}}^{*} \otimes \delta_{ik}t_{ab} ~~ \text{ [by \eqref{B}] }\\
  =& \sum\limits_{a \in [K],i \in [N]} S_{e_{i}^{a}} S_{e_{i}^{a}}^{*} \otimes t_{ab}
  =  \sum\limits_{a \in [K]} \left( \sum\limits_{i \in [N]} S_{e_{i}^{a}} S_{e_{i}^{a}}^{*} \right) \otimes t_{ab}\\
  = & \sum\limits_{a \in [K]} p_a \otimes t_{ab} = \alpha(p_b).
\end{align*}
Since $\alpha$ is a linear action, it is straightforward to verify that $\alpha$ satisfies both the action equation and faithfulness condition.\\
 To show Podle\'s condition, we ensure that both $S_{e_{i}^{a}} \otimes 1$ and  $S_{e_{i}^{a}}^* \otimes 1$ belong to $span[{\alpha(C^{*}(\sqcup_{i=1}^{K}L_N))(1 \otimes U_N^+ \wr_{*} S_K^+)}]$ for all $a \in [K]$ and $i \in [N]$. Indeed, observe that
\begin{align*}
\sum\limits_{c \in [K], s \in [N]} \alpha(S_{e_{s}^{c}})(1 \otimes {w_{is}^{ac}}^*) = & \sum\limits_{\substack{c \in [K],\\ s \in [N]}} \left (\sum\limits_{\substack{p \in [K],\\ k \in [N]}} S_{e_{k}^{p}} \otimes w_{ks}^{pc} \right ) \left (1 \otimes {w_{is}^{ac}}^* \right ) = \sum\limits_{\substack{c,p \in [K],\\ s,k \in [N]}} S_{e_{k}^{p}} \otimes  w_{ks}^{pc} {w_{is}^{ac}}^* \\
=& \sum\limits_{p \in [K], k \in [N]} S_{e_{k}^{p}} \otimes  \left ( \sum\limits_{c \in [K], s \in [N]} w_{ks}^{pc} {w_{is}^{ac}}^* \right )  \\
=& \sum\limits_{p \in [K], k \in [N]} S_{e_{k}^{p}} \otimes \delta_{(p,k) (a,i)}  ~~ \text{ [by \eqref{D}] }\\
=& ~~ S_{e_{i}^{a}} \otimes  1,
\end{align*}
and Equation \eqref{F} similarly ensures that 
\begin{equation*}
	\sum\limits_{c \in [K], s \in [N]} \alpha(S_{e_{s}^{c}}^{*})(1 \otimes {w_{is}^{ac}})=  S_{e_{i}^{a}}^{*} \otimes  1.
\end{equation*}
Moreover, since $\alpha$ is a unital action, $1 \otimes q = \alpha(1)(1 \otimes q) \in \alpha(C^{*}(\sqcup_{i=1}^{K}L_N))(1 \otimes U_N^+ \wr_{*} S_K^+)$ for all $q \in U_{N}^{+} \wr_{*} S_{K}^{+}$.\\
The rest of the argument now follows by adopting standard facts from CQG actions [for further details, one may consult 4.2.3 and 4.2.4 in Section 4 of \cite{Web}].\\

\noindent Finally, since the subspace $\mathcal{V}_{2,+}$ for the graph $\sqcup_{i=1}^{K}L_N$ is spanned by $\{ S_{e_{i}^{a}} S_{e_{j}^{a}}^{*} : a \in [K] \text{ and }i,j \in [N]\}$, one can get the $\tau$-preserving condition from the following:
\begin{align*}
(\tau \otimes id) \alpha(S_{e_{i}^{a}} S_{e_{j}^{a}}^{*}) = \sum\limits_{\substack{c,d \in [K],\\ k, l \in [N]}} \tau(S_{e_{k}^{c}} S_{e_{l}^{d}}^*) ~~ w_{ki}^{ca} {w_{lj}^{da}}^{*} = & \sum\limits_{\substack{c,d \in [K],\\ k,l \in [N]}} \delta_{(c,k)(d,l)} ~~ w_{ki}^{ca} {w_{lj}^{da}}^{*}\\
= & \sum\limits_{c \in [K], k \in [N]} w_{ki}^{ca} {w_{kj}^{ca}}^{*} = \delta_{ij}1 ~~ \text{ [by \eqref{E}] }\\
= & ~~ \tau(S_{e_{i}^{a}} S_{e_{j}^{a}}^{*}) 1.
\end{align*}


Therefore, by the universal property of $Q_{\tau}^{Lin}(\sqcup_{i=1}^{K} L_N)$, there exists a surjective $C^*$-homomorphism $\phi : Q_{\tau}^{Lin}(\sqcup_{i=1}^{K} L_N) \to  U_N^+ \wr_{*} S_K^+ $ such that $\phi(q_{ij}^{ab})=w_{ij}^{ab}= u_{ij}^{a}t_{ab} $. \\

\noindent {\bf STEP-2:} We define a *-homomorphism $ \psi : U_N^+ \wr_{*}S_K^+ \to Q_{\tau}^{Lin}(\sqcup_{i=1}^{K} L_N)$ such that $\psi$ is the inverse of $\phi$.\\ 
To this end, we define $\psi$ on the generators of $U_N^+ \wr_{*}S_K^+$ by $$ \psi(u_{ij}^{a})= \sum\limits_{b \in [K]} q_{ij}^{ab} =: v_{ij}^{a}  ~~\text{ and } ~~ \psi(t_{ab})= \sum\limits_{i \in [N]} {q_{ij}^{ab}}^* {q_{ij}^{ab}} = c_{ab}.$$ To establish the existence of $\psi$, we verify that $\{v_{ij}^{a}, c_{ab}: a,b \in [K] \text{ and } i,j \in [N]\}$ satisfies all three defining relations (a), (b) and (c). \\

\noindent We start by observing that
\begin{align*}
\sum_{k \in [N]} v_{ik}^{a}{v_{jk}^{a}}^*=& \sum_{k \in [N]} ~~~~\sum_{ b,b' \in [K]} q_{ik}^{ab} {q_{jk}^{ab'}}^*\\
=& \sum_{k \in [N], b \in [K]} q_{ik}^{ab} {q_{jk}^{ab}}^* ~~[\text{by \eqref{qs5}}]\\
=& ~~ \delta_{ij} ~~[\text{by \eqref{u1}}], 
\end{align*}
and 
\begin{align*}
\sum_{k \in [N]} {v_{ki}^{a}}^*{v_{kj}^{a}}= & \sum_{k \in [N]} ~~~~\sum_{ b,b' \in [K]} {q_{ki}^{ab}}^* {q_{kj}^{ab'}}\\
=& \sum_{k \in [N], b \in [K]} {q_{ki}^{ab}}^* {q_{kj}^{ab}} ~~[\text{by \eqref{qs4}}]\\
=&\sum_{k \in [N], b \in [K]} q_{ik}^{ab} {q_{jk}^{ab}}^* ~~[\text{by } \eqref{qs2.1}]\\ =& \delta_{ij}. 
\end{align*}

\noindent A similar computation also shows that
\begin{align*}
\sum_{k \in [N]} {v_{ki}^{a}}{v_{kj}^{a}}^*=& ~~\delta_{ij}, \\
\text{ and } \sum_{k \in [N]} {v_{ik}^{a}}^{*}{v_{jk}^{a}}=& ~~ \delta_{ij}.
\end{align*}

\noindent Therefore, the above calculations imply that for each $ p \in [K] $, both $V^{(p)}:= (v_{ij}^{p})_{N \times N}$ and ${V^{(p)}}^t$ are unitary matrices.\\

\noindent Since each $c_{ab}$ is a projection with\\
\begin{align*}
\sum_{x \in [K]}c_{xb}=&  1\\
\text{ and }\sum_{y \in [K]} c_{ay}= &  \sum_{y \in [K]} \sum_{i \in  [N]} {q_{ij}^{ay}}^* {q_{ij}^{ay}}= \sum_{y \in [K]} \sum_{j \in  [N]} {q_{ij}^{ay}} {q_{ij}^{ay}}^* ~~[\text{using \eqref{qs2.1}}]\\=& 1, 
\end{align*}
\noindent $ (c_{ab})_{K \times K} $ is a magic unitary matrix.\\

\noindent Lastly, we show that 
$ v_{ij}^{a} c_{ab} = c_{ab} v_{ij}^{a} ~~\forall a,b \in[K] \text{ and }i,j \in [N] $.
Indeed,
\begin{align*}
v_{ij}^{a}c_{ab}= \sum_{b' \in [K], i' \in [N]} q_{ij}^{ab'}  {q_{i'1}^{ab}}^* {q_{i'1}^{ab}}=&~~ \sum_{ i' \in [N]} q_{ij}^{ab}  {q_{i'1}^{ab}}^* {q_{i'1}^{ab}} ~~[\text{by } \eqref{qs5}]\\ =& ~~ q_{ij}^{ab}c_{ab}=q_{ij}^{ab} ~~[\text{by } \eqref{proj}].
\end{align*}   
Similarly, using \eqref{qs3} and \eqref{proj}, one can obtain that $c_{ab}v_{ij}^{a}=q_{ij}^{ab}$.\\
Hence, $v_{ij}^{a}c_{ab}=c_{ab}v_{ij}^{a}  ~~\forall a,b \in[K] \text{ and }i,j \in [N]$.\\

Again, the consequences of the above calculations show that the elements of $\{v_{ij}^{a}, c_{ab}: a,b \in [K] \text{ and } i,j \in [N] \} \subset Q_{\tau}^{Lin}(\sqcup_{i=1}^{K} L_N)$ satisfy all the defining relations (a), (b), (c) of $ U_N^+ \wr_{*} S_K^+ $.\\
Therefore, by the universal property of $U_N^+ \wr_{*} S_K^+ $, there exists a surjective $C^*$-homomorphism $ \psi: U_N^+ \wr_{*} S_K^+  \to Q_{\tau}^{Lin}(\sqcup_{i=1}^{K} L_N) $ such that $\psi(u_{ij}^{a}) = v_{ij}^{a} =\sum\limits_{b \in [K]} q_{ij}^{ab} $ and $\psi(t_{ab})= c_{ab} = \sum\limits_{i \in [N]} {q_{ij}^{ab}}^* {q_{ij}^{ab}} $.\\

\noindent Moreover, it is evident that $\phi \circ \psi= id_{U_N^+ \wr_{*} S_K^+}$ and $\psi \circ \phi=id_{Q_{\tau}^{Lin}(\sqcup_{i=1}^{K} L_N)}$.\\
Hence, $\phi : Q_{\tau}^{Lin}(\sqcup_{i=1}^{K} L_N) \to  U_N^+ \wr_{*} S_K^+ $ is a $C^*$-isomorphism.\\

\noindent {\bf STEP-3:} Finally, to show $\phi$ is a CQG morphism, it is sufficient to verify that $(\phi \otimes \phi)\Delta_1=\Delta_2\phi$ on the generators. Starting with the left-hand side, we have
\begin{align*}
(\phi \otimes \phi)\Delta_1(q_{ij}^{ab})= (\phi \otimes \phi) \left (\sum\limits_{k \in [N],c \in [K]} q_{ik}^{ac} \otimes q_{kj}^{cb} \right )= \sum\limits_{k \in [N],c \in [K]} \phi(q_{ik}^{ac}) \otimes \phi(q_{kj}^{cb})= \sum\limits_{k \in [N],c \in [K]} u_{ik}^{a}t_{ac} \otimes u_{kj}^{c}t_{cb}.
\end{align*}
On the other hand, the right-hand side gives  
\begin{align*}
\Delta_2 \phi(q_{ij}^{ab}) =& ~~ \Delta_2(u_{ij}^{a}t_{ab})=\Delta_2(u_{ij}^{a})\Delta_2(t_{ab})=\left[  \sum\limits_{c'\in [K],k \in [N] } (u_{ik}^a \otimes u_{kj}^{c'})(t_{ac'} \otimes 1)\right] \left[ \sum\limits_{c \in [K]} (t_{ac} \otimes t_{cb}) \right] \\
=& ~~ \sum\limits_{c,c'\in [K],k \in [N] } (u_{ik}^a \otimes u_{kj}^{c'})(t_{ac'} \otimes 1) (t_{ac} \otimes t_{cb}) =  \sum\limits_{c'\in [K],k \in [N] } (u_{ik}^a \otimes u_{kj}^{c'})\sum\limits_{c\in [K] }(t_{ac'}t_{ac} \otimes t_{cb})\\
=& ~~   \sum\limits_{c'\in [K],k \in [N] } (u_{ik}^a \otimes u_{kj}^{c'})(t_{ac'} \otimes t_{c'b})=\sum\limits_{c'\in [K],k \in [N] } (u_{ik}^a t_{ac'} \otimes u_{kj}^{c'} t_{c'b}).
\end{align*}
Thus, $(\phi \otimes \phi)\Delta_1=\Delta_2\phi$, and the proof is complete.
 \end{proof}  

\begin{rem}\label{remfreewreath}
	For a connected graph $ \Gamma $, if we consider $K$ disjoint copies of $ \Gamma $, namely $(\sqcup_{i=1}^{K} ~~ \Gamma )$, then the quantum automorphism group of $K$ disjoint copies  $\Gamma$ in the sense of Banica (respectively, Bichon) is isomorphic to the free wreath product of the quantum automorphism group of $\Gamma$ in the sense of Banica (respectively, Bichon) with $S_K^+$ (see \cite{Banver}, \cite{Bichonwreath} for more details), i.e.
	\begin{equation*}
	QAut_{Ban}(\sqcup_{i=1}^{K} ~ \Gamma) \cong  QAut_{Ban}(\Gamma) \wr_* S_K^+
	\end{equation*} 
	and 
	\begin{equation*}
	QAut_{Bic}(\sqcup_{i=1}^{K} ~ \Gamma) \cong  QAut_{Bic}(\Gamma) \wr_* S_K^+.
	\end{equation*}
	Though Theorem \ref{thm2} ensures that a similar result is also true for the graph $L_n$ (i.e., for the underlying Cuntz algebra $\mathcal{O}_n $) in the context of graph $C^*$-algebra, the same is not always true for an arbitrary graph in the category discussed above. In the next subsection, we will provide a counter-example to show the above claim. 
\end{rem}

Now, we will justify the analogous results in terms of the categories $\mathfrak{C}_{\oplus KMS}^{Lin}$ and $\mathfrak{C}_{\mathfrak{F}}(\sqcup_{i=1}^{K} L_{N})$. 
\begin{thm} 
	Let $\{\mathcal{O}_{N}\}_{i=1}^{K}$ be a finite family of isomorphic Cuntz algebras (with $N$ generators) whose underlying graphs are $L_{N}$. Then $ Q_{\oplus KMS }^{Lin}(\sqcup_{i=1}^{K} L_{N} ) \cong Q_{\tau }^{Lin}(\sqcup_{i=1}^{K} L_{N} ) \cong  U_{N}^{+} \wr_{*} S_{K}^+ \cong Q_{KMS}^{Lin}(L_{N}) \wr_{*} S_{K}^+ .$
\end{thm}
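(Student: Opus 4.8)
The strategy mirrors the chain of equivalences already established for the non-isomorphic case (Theorems \ref{thm1}, \ref{diskmsthm}, \ref{orthothm1}), but now routed through Theorem \ref{thm2} instead. The key structural input is Corollary \ref{kmscor1}: since all $n_i$'s here equal $N$, the categories $\mathfrak{C}_{\tau}^{Lin}$ and $\mathfrak{C}_{KMS}^{Lin}$ coincide for $\oplus_{i=1}^{K} \mathcal{O}_{N}$, so $\mathfrak{C}_{\oplus KMS}^{Lin}$ is nothing but $\mathfrak{C}_{\tau}^{Lin}$ in this situation; indeed, by Theorem \ref{kmsthm2} the natural state $\oplus_{i=1}^{K} KMS_{ln(N)}$ is, up to the normalising factor $\tfrac1K$, exactly the $KMS_{ln \rho}$ state whose preservation defines $\mathfrak{C}_{KMS}^{Lin}$. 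Hence $Q_{\oplus KMS}^{Lin}(\sqcup_{i=1}^{K} L_{N}) \cong Q_{\tau}^{Lin}(\sqcup_{i=1}^{K} L_{N})$, and Theorem \ref{thm2} immediately gives $Q_{\tau}^{Lin}(\sqcup_{i=1}^{K} L_{N}) \cong U_{N}^{+} \wr_{*} S_{K}^{+}$. The remaining isomorphism $Q_{KMS}^{Lin}(L_{N}) \cong U_{N}^{+}$ for a single copy is already recorded (it follows from Corollary \ref{kmscor1} applied with $m=1$ together with the known $Q_{\tau}^{Lin}(L_{N}) \cong U_{N}^{+}$), so the chain closes.

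\textbf{Carrying it out.} First I would spell out the identification $\mathfrak{C}_{\oplus KMS}^{Lin} = \mathfrak{C}_{\tau}^{Lin}$ carefully: an object $((Q,\Delta),\alpha)$ of $\mathfrak{C}_{\oplus KMS}^{Lin}$ is a linear action preserving $\oplus_{i=1}^{K} KMS_{ln(N)}$, and since this state restricted to $\mathcal{V}_{2,+}$ agrees (after rescaling) with the functional $\tau$, the preservation condition $(\oplus KMS \otimes id)\alpha = (\oplus KMS)(\cdot)1$ is equivalent to $(\tau \otimes id)\alpha(\cdot) = \tau(\cdot)1$ on $\mathcal{V}_{2,+}$; this is precisely the content of Theorem \ref{kmsthm2} specialised via Corollary \ref{kmscor1}. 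Thus the two categories have identical objects and morphisms, so their universal objects coincide. Next I invoke Theorem \ref{thm2} directly to get the middle isomorphism. For the orthogonal filtration statement I would argue as in Theorem \ref{orthothm1}: by Observation \ref{orthothm1rem}(i) every object of $\mathfrak{C}_{\mathfrak{F}}(C^*(\sqcup_{i=1}^{K} L_{N}))$ is an object of $\mathfrak{C}_{\oplus KMS}^{Lin}$ (this holds regardless of whether the $n_i$'s are distinct), giving a surjection $U_N^+ \wr_* S_K^+ \to Q_{\mathfrak{F}}$; conversely, by Observation \ref{orthothm1rem}(ii), since the fundamental representation of $U_N^+ \wr_* S_K^+$ (built from the $w_{ij}^{ab} = u_{ij}^a t_{ab}$) is a unitary matrix, the action $\alpha$ of Theorem \ref{thm2} automatically preserves each $\mathcal{F}_k$, hence each $\mathcal{W}_k$, and the $\mathcal{M}^{(1)}_{r,s}$, $\mathcal{M}^{(2)}_{r,s}$ by the same multiplicativity argument used there. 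This yields the reverse surjection and hence $Q_{\mathfrak{F}}(C^*(\sqcup_{i=1}^{K} L_{N})) \cong U_N^+ \wr_* S_K^+$, with $Q_{\mathfrak{F}_i}(C^*(L_N)) \cong U_N^+$ from Corollary 5.5 of \cite{Mandalkms}.

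\textbf{Main obstacle.} There is no genuinely new computational content here — the heavy lifting (the magic-unitary relations $c_{ab}$, the commutation $q_{ij}^{ab}c_{ab} = c_{ab}q_{ij}^{ab}$, the two-sided homomorphisms $\phi,\psi$) was done in Theorem \ref{thm2}. The only point requiring a little care is the bookkeeping of normalisation constants in the state $\oplus_{i=1}^{K} KMS_{ln(N)}$ versus $KMS_{ln \rho(A(\sqcup L_N))}$: one must check, using Corollary \ref{directkmseq}, that $\rho(A(\sqcup_{i=1}^{K} L_N)) = N$ and that the factor $\tfrac1K$ cancels consistently on both sides of the invariance equation, so that state preservation is genuinely equivalent to $\tau$-invariance on $\mathcal{V}_{2,+}$ rather than merely implied in one direction. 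Once that is verified, the proof is a short assembly of already-proven facts.
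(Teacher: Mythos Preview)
Your proposal is correct and takes essentially the same approach as the paper: use Corollary~\ref{directkmseq} and Corollary~\ref{kmscor1} to identify $\mathfrak{C}_{\oplus KMS}^{Lin}$ with $\mathfrak{C}_{\tau}^{Lin}$ (via $\mathfrak{C}_{KMS}^{Lin}$), then invoke Theorem~\ref{thm2}. Your discussion of the orthogonal filtration case is not needed for this statement --- that argument belongs to the separate Theorem~\ref{orthothm2} --- but it is correct and matches the paper's treatment there as well.
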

\begin{proof}
    In this case,  since ${KMS}_{ln\rho(A(\sqcup_{i=1}^{K} ~L_{N}))} = \oplus_{i=1}^{K}  {KMS}_{ln \rho(A(L_{N}))} $ (by Corollary \ref{directkmseq}), $\mathfrak{C}_{KMS}^{Lin}$ and $\mathfrak{C}_{\oplus KMS}^{Lin}$ coincide for $\oplus_{i=1}^{K} \mathcal{O}_N$.
	Hence, by Theorem  \ref{kmsthm1}, $\mathfrak{C}_{\oplus KMS}^{Lin}$ has a universal object. Now, using Corollary \ref{kmscor1}, we can say that the universal object of $\mathfrak{C}_{\oplus KMS}^{Lin}$ is isomorphic to the universal object of $\mathfrak{C}_{\tau}^{Lin}$. Hence, the result follows. \\
\end{proof}

\begin{thm} \label{orthothm2}
Let $\{\mathcal{O}_{N}\}_{i=1}^{K}$ be a finite family of isomorphic Cuntz algebras (with $N$ generators) whose underlying graphs are $L_{N}$. Then $Q_{\mathfrak{F}}({C^*(\sqcup_{i=1}^{K}L_{N}))} \cong  Q_{\oplus KMS}^{Lin}(\sqcup_{i=1}^{K} L_{N} ) \cong *_{i=1}^{K} U_{N}^{+} \cong Q_{\mathfrak{F}_{i}}({C^*(L_{N}))} \wr_{*} S_{K}^{+}$.
\end{thm}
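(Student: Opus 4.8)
The plan is to carry out the free-wreath-product analogue of the argument used for Theorem~\ref{orthothm1}. By Theorem~\ref{thm2} together with Corollary~\ref{kmscor1} and Corollary~\ref{directkmseq}, the categories $\mathfrak{C}_{\tau}^{Lin}$, $\mathfrak{C}_{KMS}^{Lin}$ and $\mathfrak{C}_{\oplus KMS}^{Lin}$ all coincide for $\oplus_{i=1}^{K}\mathcal{O}_{N}\cong C^*(\sqcup_{i=1}^{K}L_N)$, so we already know $Q_{\oplus KMS}^{Lin}(\sqcup_{i=1}^{K}L_N)\cong U_{N}^{+}\wr_{*}S_{K}^{+}$, realized by the explicit faithful linear action $\alpha$ of Equation~\eqref{actioneq} with $n_i=N$ for every $i$. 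It therefore remains to identify $Q_{\mathfrak{F}}(C^*(\sqcup_{i=1}^{K}L_N))$ with $U_{N}^{+}\wr_{*}S_{K}^{+}$; the last isomorphism in the statement, $Q_{\mathfrak{F}_i}(C^*(L_N))\wr_{*}S_{K}^{+}$, is then immediate since $Q_{\mathfrak{F}_i}(C^*(L_N))\cong U_{N}^{+}$ by Corollary~5.5 of \cite{Mandalkms}.

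First I would build a surjective CQG morphism $U_{N}^{+}\wr_{*}S_{K}^{+}\twoheadrightarrow Q_{\mathfrak{F}}(C^*(\sqcup_{i=1}^{K}L_N))$ carrying generators to generators. By Observation~\ref{orthothm1rem}(i), any object $((\mathcal{Q},\Delta),\alpha)$ of $\mathfrak{C}_{\mathfrak{F}}(C^*(\sqcup_{i=1}^{K}L_N))$ is also an object of $\mathfrak{C}_{\oplus KMS}^{Lin}$: the inclusion $\alpha(\mathcal{M}_{0,1}^{(1)})\subseteq\mathcal{M}_{0,1}^{(1)}\otimes\mathcal{Q}$ forces $\alpha$ to be linear, and Equation~(2.1) of \cite{Ortho} forces $\alpha$ to preserve $\oplus_{i=1}^{K}KMS_{ln(N)}$. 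Hence the universal object of $\mathfrak{C}_{\oplus KMS}^{Lin}$, namely $U_{N}^{+}\wr_{*}S_{K}^{+}$, dominates the universal object of $\mathfrak{C}_{\mathfrak{F}}$, which yields the required surjection.

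For the reverse direction I would verify that the linear action $\alpha$ of Equation~\eqref{actioneq} --- now viewed as an action of $U_{N}^{+}\wr_{*}S_{K}^{+}$ --- preserves the orthogonal filtration $\mathfrak{F}$. By Observation~\ref{orthothm1rem}(ii) the only property needed is that the fundamental representation $q=(q_{ef})_{NK\times NK}$ is unitary: linearity of $\alpha$ then gives $\alpha(\mathcal{F}_k)\subseteq\mathcal{F}_k\otimes\mathcal{Q}$ and makes $\alpha|_{\mathcal{F}_k}$ a unitary corepresentation, so $\alpha$ preserves $\mathcal{W}_k=\mathcal{F}_k\ominus\mathcal{F}_{k-1}$; and distributing $\alpha$ over a generator $S_\mu x\in\mathcal{M}_{r,s}^{(1)}$ with $x\in\mathcal{W}_r$ (respectively $yS_\nu^*\in\mathcal{M}_{r,s}^{(2)}$), together with $\alpha(x)\in\mathcal{W}_r\otimes\mathcal{Q}$ and the $*$-preserving property of $\alpha$, gives $\alpha(\mathcal{M}_{r,s}^{(1)})\subseteq\mathcal{M}_{r,s}^{(1)}\otimes\mathcal{Q}$ and $\alpha(\mathcal{M}_{r,s}^{(2)})\subseteq\mathcal{M}_{r,s}^{(2)}\otimes\mathcal{Q}$. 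Thus $((U_{N}^{+}\wr_{*}S_{K}^{+},\Delta_2),\alpha)$ is an object of $\mathfrak{C}_{\mathfrak{F}}(C^*(\sqcup_{i=1}^{K}L_N))$, so its universal property produces a surjective CQG morphism $Q_{\mathfrak{F}}(C^*(\sqcup_{i=1}^{K}L_N))\twoheadrightarrow U_{N}^{+}\wr_{*}S_{K}^{+}$, again generator-to-generator. Composing, the two surjections are mutually inverse on generators, hence inverse CQG isomorphisms, completing the chain $Q_{\mathfrak{F}}(C^*(\sqcup_{i=1}^{K}L_N))\cong Q_{\oplus KMS}^{Lin}(\sqcup_{i=1}^{K}L_N)\cong U_{N}^{+}\wr_{*}S_{K}^{+}\cong Q_{\mathfrak{F}_i}(C^*(L_N))\wr_{*}S_{K}^{+}$. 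I do not anticipate a genuine obstacle here: this is the wreath-product mirror of Theorem~\ref{orthothm1}, and the one piece of real content --- that filtration-preservation reduces to unitarity of the fundamental representation --- has already been extracted in Observation~\ref{orthothm1rem}(ii).
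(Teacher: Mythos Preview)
Your proposal is correct and follows exactly the paper's approach: the paper simply notes that the fundamental representation of $Q_{\oplus KMS}^{Lin}(\sqcup_{i=1}^{K}L_N)\cong Q_{\tau}^{Lin}(\sqcup_{i=1}^{K}L_N)$ is unitary (by Equations \eqref{u1}--\eqref{u4}) and then invokes Observation~\ref{orthothm1rem} to reduce the first isomorphism to the same argument as in Theorem~\ref{orthothm1}, which is precisely what you spell out. You have also correctly read the middle term of the statement as $U_N^{+}\wr_{*}S_K^{+}$ rather than the free product, which is an evident typo given the surrounding results.
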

\noindent Recall that the fundamental representation $q=(q_{ij}^{ab})_{NK \times NK}$ of CMQG  $ (Q_{\oplus KMS }^{Lin}(\sqcup_{i=1}^{K} L_{N}),q ) \cong (Q_{\tau }^{Lin}(\sqcup_{i=1}^{K} L_{N}),q)$ satisfies Equations $\eqref{u1}-\eqref{u4}$, which imply $q$ is unitary. Now, the observations mentioned in Observation \ref{orthothm1rem} ensure that exactly the same arguments as in Theorem \ref{orthothm1} work to show the first isomorphism mentioned in the above theorem.

\subsection{Counter Example} \label{counterex}
In this subsection, we will provide a graph $\Gamma$ to show that
$ Q_{\tau}^{Lin}(\sqcup_{i=1}^{K} \Gamma )$ is not the same as  $Q_{\tau}^{Lin}(\Gamma) \wr_{*} S_{K}^+ $ in general.\\

\noindent Consider the graph $P_1 \sqcup P_1$, which consists of a disjoint union of two distinct edges $e_1$ and $e_2$ (see Figure \ref{P1P1}). Here, $P_1$ represents a simple directed path of length 1 (which is effectively a directed edge). Note that $C^*(P_1 \sqcup P_1)$ is $C^*$-isomorphic to $ M_2(\mathbb{C}) \oplus M_2(\mathbb{C})$. 

\begin{center}
	\begin{figure}[htpb]
		\begin{tikzpicture}
		\draw[fill=black] (0,0) circle (2pt) node[anchor=north]{${1}$};
		\draw[fill=black] (2,0) circle (2pt) node[anchor=north]{${2}$};
		\draw[fill=black] (4,0) circle (2pt) node[anchor=north]{${3}$};
		\draw[fill=black] (6,0) circle (2pt) node[anchor=north]{${4}$};
		
		\draw[black,thick] (0,0)-- node{\rmidarrow}  node[above]{$e_{1}$} (2,0);
		\draw[black,thick] (4,0)-- node{\rmidarrow}  node[above]{$e_{2}$} (6,0);
		
		\end{tikzpicture}
		\caption{$P_1 \sqcup P_1$} \label{P1P1}
	\end{figure}
\end{center}   
Again, Proposition 3.4 of \cite{unitaryeasygraph} tells us that $Q_{\tau}^{Lin}(P_1 \sqcup P_1) \approx SH_{2}^{\infty +}$ with respect to their standard fundamental representations, as it is mentioned in Subsections \ref{CQG} and \ref{QSgraph}. On the other hand, $Q_{\tau}^{Lin}(P_1) \wr_{*} S_2^+ \cong C(S^1) \wr_{*} S_2^+ \cong H_{2}^{\infty +}$. But clearly, $SH_{2}^{\infty +}$ is not identical to $H_{2}^{\infty +}$.

\begin{rem}
	Though from Remark \ref{remfreewreath}, we observe that the quantum automorphism group (in the sense of Banica or Bichon) of $K$ disjoint copies of connected graph $\Gamma$  is isomorphic to the free wreath product of the quantum automorphism group of $\Gamma$ with $S_K^+$; Counter Example \ref{counterex} tells us that a similar result may not be true if we move from the context of graph to graph $C^*$-algebra. Now we are ready to ask the following question:\\
	Can one classify the graph $C^*$-algebras $C^*(\Gamma)$ for which 
	$ Q_{\tau}^{Lin}(\sqcup_{i=1}^{K} \Gamma ) \cong Q_{\tau}^{Lin}(\Gamma) \wr_{*} S_{K}^+ $?\\
	(Note that the class is non-empty due to Theorem \ref{thm2}.)
\end{rem}  
 
\begin{rem}
Using similar ideas to what we have used in the main theorems in Sections 3 and 4, one can extend the results to the quantum symmetry of the direct sum of any family of Cuntz algebras.\\
If there are $k_i$ copies of $L_{n_i}$ for all $i \in [m]$, where all $n_i$'s are distinct, then the quantum symmetries of the direct sum Cuntz algebras, namely $Q_{\tau}^{Lin}({\sqcup_{i=1}^{m} ~ \sqcup_{k=1}^{k_i}} L_{n_i})$, $Q_{\oplus KMS}^{Lin}({\sqcup_{i=1}^{m} ~ \sqcup_{k=1}^{k_i}} L_{n_i})$ and $Q_{\mathfrak{F}}(C^*({\sqcup_{i=1}^{m} ~ \sqcup_{k=1}^{k_i}} L_{n_i}))$ (with respect to the categories mentioned before), are isomorphic to $*_{i=1}^{m}  (U_{n_i}^+ \wr_* S_{k_i}^+)$.
\end{rem}

\section*{Acknowledgements}
\noindent The first author acknowledges the financial support from the Department of Science and Technology, India (DST/INSPIRE/03/2021/001688). Both the authors also acknowledge the support from DST-FIST (File No. SR/FST/MS-I/2019/41). We would also like to thank the anonymous referee for his/her useful comments and suggestions on the earlier version of the paper.

\vspace{1cm}

\raggedright{Ujjal Karmakar} \hfill                     
{Arnab Mandal}\\
{Presidency University} \hfill
{Presidency University}\\
{College Street, Kolkata-700073}  \hfill
{College Street, Kolkata-700073}\\
{West Bengal, India}  \hfill
{West Bengal, India}\\
{Email: \email{mathsujjal@gmail.com}} \hfill
{Email: \email{arnab.maths@presiuniv.ac.in}}

\end{document}